\newcommand\les{\lesssim}
\newcommand\ges{\gtrsim}
\newcommand{\lan}{\langle}
\newcommand{\ran}{\rangle}
\newcommand\R{\mathbb{R}}
\newcommand\C{\mathbb{C}}
\newcommand\Z{\mathbb{Z}}
\newcommand\N{\mathbb{N}}
\renewcommand\S{\mathbb{S}}
\renewcommand\H{\mathbb{H}}
\newcommand{\calO}{\mathcal O}
\newcommand\W{\mathcal{W}}
\newtheorem{theo}{Theorem}
\numberwithin{theo}{section} 
\newtheorem{lema}[theo]{Lemma}
\newtheorem{prop}[theo]{Proposition}
\newtheorem{conjecture}[theo]{Conjecture}
\newtheorem*{rema}{Remark}
\numberwithin{equation}{section}
\newcommand{\dHe}{\dot{H}^1_{e}}
\begin{document}
\title{Equivariant Schr\"odinger Maps in two spatial dimensions: the $\H^2$ target}

\author{I. Bejenaru} \address{ Department of Mathematics, University
  of California, San Diego, 9500 Gilman Drive, La Jolla, CA  92093-0112}
\email{ibejenaru@ucsd.edu}

\author{A. Ionescu} \address{Department of Mathematics, Princeton
  University, Washington Rd., Princeton, NJ 08540}
\email{aionescu@math.princeton.edu}

\author{C. Kenig} \address{ Department of Mathematics, University of
  Chicago, 5734 S. University Ave, Chicago, IL 60637}
\email{cek@math.uchicago.edu}

\author{D.\ Tataru} \address{Department of Mathematics, The University
  of California at Berkeley, Evans Hall, Berkeley, CA 94720, U.S.A.}
\email{tataru@math.berkeley.edu}

\thanks{I.B. was supported in part by NSF grant DMS-1001676. A. I. was
  partially supported by a Packard Fellowship and NSF grant
  DMS-1065710. C.K. was supported in part by NSF grant DMS-0968742. 
  D.T. was supported in part by the Miller Foundation and by NSF 
grant DMS-0801261}

\begin{abstract} We consider equivariant solutions for the
  Schr\"odinger map problem from $\R^{2+1}$ to $\H^2$ with finite energy
   and show that they are global in time and scatter.

\end{abstract}

\maketitle

\section{Introduction}
The Schr\"odinger map equation in $\R^{2+1}$ with values into $\S_\mu
\subset \R^3$ is given by
\begin{equation}
  u_t = u \times_\mu \Delta u, \qquad u(0) = u_0
  \label{SM}\end{equation}
where $\mu=\pm 1$, the connected Riemannian manifolds $S_\mu$,
\begin{equation}\label{defi1}
  \begin{split}
    &S_1=\mathbb{S}^2=\{y=(y_0,y_1,y_2)\in\mathbb{R}^3:y_1^2+y_2^2+y_3^2=1\};\\
    &S_{-1}=\mathbb{H}^2=\{y=(y_0,y_1,y_2)\in\mathbb{R}^3:-y_1^2-y_2^2+y_3^2=1,\,y_3>0\},
  \end{split}
\end{equation}
with the Riemannian structures induced by the Euclidean metric
$\mathbf{g}_1=dy_0^2+dy_1^2+dy_2^2$ on $S_1$, respectively the
Minkowski metric $\mathbf{g}_{-1}=-dy_0^2+dy_1^2+dy_2^2$ on
$S_{-1}$. Thus $S_1$ is the 2-dimensional sphere $\mathbb{S}^2$, while
$S_{-1}$ is the 2-dimensional hyperbolic space $\mathbb{H}^2$. With
$\eta_\mu=\mathrm{diag}(1,1,\mu)$, the cross product $\times_\mu$ is
defined by $v\times_\mu w:=\eta_\mu\cdot(v\times w)$.

This equation admits a conserved energy,
\[
E(u) = \frac12 \int_{\R^2} |\nabla u|^2_\mu dx
\]
and is invariant with respect to the dimensionless scaling
\[
u(t,x) \to u(\lambda^2 t, \lambda x).
\]
The energy is invariant with respect to the above scaling, therefore
the Schr\"odinger map equation in $\R^{2+1}$ is {\em energy critical}.

The local theory for classical data was established in \cite{SuSuBa}
and \cite{Ga}. We recall
\begin{theo}[McGahagan] \label{clasic} If $u_0 \in \dot H^1 \cap \dot
  H^3$ then there exists a time $T>0$, such that \eqref{SM} has a
  unique solution in $L^\infty_t \dot ([0,T]:\dot H^1 \cap \dot H^3)$.
\end{theo}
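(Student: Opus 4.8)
The plan is to construct the solution by a parabolic regularization (a generalized Landau--Lifshitz flow) together with uniform energy estimates, and to establish uniqueness by an intrinsic comparison argument. First I would record the extrinsic formulation. Writing $\tau(u)=\Delta u-\langle\Delta u,u\rangle_\mu\,u$ for the projection of $\Delta u$ onto $T_uS_\mu$ (the tension field), the constraint $u\times_\mu u=0$ shows that \eqref{SM} is equivalent to $u_t=u\times_\mu\tau(u)$. For $a\in(0,1]$ I would then introduce the approximating flow
\begin{equation}
\partial_t u^{(a)}=a\,\tau(u^{(a)})+u^{(a)}\times_\mu\tau(u^{(a)}),\qquad u^{(a)}(0)=u_0 .
\label{LL}
\end{equation}
Since the $a\tau$ term is the harmonic map heat flow into $S_\mu$, equation \eqref{LL} is a parabolic system that preserves the target constraint and is locally well posed for $u_0\in\dot H^1\cap\dot H^3$ by a standard contraction (or Galerkin) argument.

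The core of the argument is a family of energy estimates for \eqref{LL} that are \emph{uniform} in $a$. Differentiating the covariant energies associated with $\dot H^1\cap\dot H^3$ in time and substituting \eqref{LL}, the potentially derivative-losing contribution of the Schr\"odinger term $u\times_\mu\tau(u)$ is annihilated by the antisymmetry relations $\langle u\times_\mu w,\,w\rangle_\mu=0$ and $\langle u\times_\mu w,\,u\rangle_\mu=0$, while the parabolic term $a\tau$ contributes a favorable (dissipative) sign and hence cannot degrade the bounds. This produces a time $T=T(\|u_0\|_{\dot H^1\cap\dot H^3})>0$, independent of $a$, on which $\{u^{(a)}\}$ is bounded in $L^\infty_t([0,T]:\dot H^1\cap\dot H^3)$. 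Using the equation to control $\partial_t u^{(a)}$ in a lower-regularity norm, an Aubin--Lions compactness argument lets me extract a subsequence $u^{(a)}\to u$ converging, as $a\to 0$, to a solution of \eqref{SM} in the desired class.

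The hard part is uniqueness, since the equation is quasilinear and two maps into a curved target cannot be subtracted naively without losing derivatives. Given two solutions $u,v$ with the same data, I would compare them intrinsically: join $u(t,x)$ and $v(t,x)$ by the unique minimizing geodesic in $S_\mu$, let $w$ be the associated tangent field in the pullback bundle, and transport the two equations into a common frame by parallel transport along these geodesics. The resulting equation for $w$ is, at top order, a scalar Schr\"odinger equation whose quasilinear terms again cancel by antisymmetry, so that a Gronwall estimate on a suitable energy for $w$ forces $w\equiv 0$, hence $u=v$. Controlling the curvature terms generated by parallel transport, and verifying that the geodesic construction is uniformly valid given the $L^\infty$ bounds furnished by $\dot H^1\cap\dot H^3$, is the principal technical obstacle; it is also where the intrinsic (rather than extrinsic) viewpoint is essential.
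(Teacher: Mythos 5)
The paper does not prove this theorem at all: it is recalled verbatim from the literature (the local theory of \cite{SuSuBa} and \cite{Ga}), so there is no internal proof to compare against. Judged on its own terms, your outline is a correct and essentially standard strategy, and it closely tracks how the result is actually established in those references: existence by an approximating flow with uniform-in-parameter energy estimates, and uniqueness by McGahagan's intrinsic comparison via parallel transport along connecting geodesics (which, as you note, is unproblematic for $\H^2$ since minimizing geodesics are globally unique there, and works for $\S^2$ on a short time interval). One point of divergence worth flagging: McGahagan's actual approximation scheme is hyperbolic --- she perturbs to a wave-map-type equation --- rather than the parabolic Landau--Lifshitz regularization $a\,\tau(u^{(a)})$ you propose; the parabolic route is equally legitimate (it is the one used in the Landau--Lifshitz literature) but it is a different scheme, and the dissipative term changes which commutator terms appear in the higher-order energies. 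The two genuinely hard steps are correctly identified but only named, not carried out: (i) the uniform-in-$a$ bounds at the $\dot H^3$ level require working with covariant energies such as $\int |\nabla \tau(u)|^2$ and controlling the curvature commutators by Gagliardo--Nirenberg, since the naive extrinsic derivatives do not see the antisymmetry cancellation cleanly; and (ii) the Gronwall estimate for the transported difference field $w$ requires estimating the terms generated by the $x$- and $t$-dependence of the parallel transport itself, which is the entire technical content of McGahagan's uniqueness proof. As a plan this is sound; as a proof it is a sketch of arguments that live in the cited papers rather than in this one.
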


The local and global in time of the Schr\"odinger map problem with
small data has been intensely studied for the case $\mu=1$
corresponding to $\S^2$ as target, see \cite{Be}, \cite{Be2},
\cite{bik}, \cite{BIKT}, \cite{csu}, \cite{IoKe2}, \cite{IoKe3}.  The
state of the art result for the problem with small data was
established by the authors in \cite{BIKT} where they proved that
classical solutions (and in fact rough solutions too) with small
energy are global in time.  These results are expected to extend to
the case $\mu=-1$, corresponding to $\H^2$ as a target.

To gain some intuition about  the large data problem, one needs to describe the
solitons for \eqref{SM}. The solitons for this problem are the
harmonic maps, which are solutions to $u \times \Delta u=0$.  Since
$\H^2$ is negatively curved there are no finite energy nontrivial
harmonic maps.  In the case of $\S^2$ there are finite energy harmonic
maps, but they cannot have arbitrary energy.  The trivial solitons are
points, i.e. $u=Q$ for some $Q \in \S^2$ and their energy is $0$. The
next energy level admissible for solitons is $4 \pi$; the corresponding soliton
is, up to symmetries, the stereographic projection. Based on this, it is natural to make 
the following 

\begin{conjecture}
a) Global well-posedness and scattering for Schr\"odinger maps
from $\R^2 \times \R$ into $\H^2$  holds for all finite energy data.

b) Global well-posedness and scattering for  Schr\"odinger maps
from $\R^2 \times \R$ into $\S^2$ holds for all data with energy below $4\pi$.
\end{conjecture}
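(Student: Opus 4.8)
The plan is to follow the concentration--compactness and rigidity paradigm of Kenig--Merle, using the small-energy global theory of \cite{BIKT} as the base case and the classification of finite-energy harmonic maps (no nontrivial ones into $\H^2$; lowest nonzero energy exactly $4\pi$ into $\S^2$) as the rigidity input. The first step is to reformulate \eqref{SM} in a gauge that converts the intrinsic, manifold-valued flow into a gauge-covariant nonlinear Schr\"odinger system. One chooses an orthonormal frame on the pullback tangent bundle and forms the differentiated fields $\psi_m$ together with the connection coefficients $A_m$; in the Coulomb or caloric (heat-flow) gauge the Schr\"odinger map equation becomes a cubic, gauge-covariant Schr\"odinger equation for $\psi$, with energy comparable to $\|\psi\|_{L^2}^2$. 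In this formulation the result of \cite{BIKT} yields global well-posedness and scattering whenever the energy lies below a small threshold $\e_0$.

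Next I would introduce the critical energy $E_c$, defined as the supremum of all $E$ for which every solution of energy below $E$ is global and scatters. Part (a) asserts $E_c=\infty$ for the $\H^2$ target and part (b) asserts $E_c\ge 4\pi$ for the $\S^2$ target. Assuming for contradiction that $E_c$ is finite (resp.\ $E_c<4\pi$), I would run a linear profile decomposition for the gauged Schr\"odinger equation, with profiles concentrating at scales $\l_n$ and spatial centers $x_n$ dictated by the scaling $u(t,x)\to u(\l^2 t,\l x)$ and translation symmetries. Combining a nonlinear perturbation/stability argument for the gauged flow with the defect of compactness should produce a minimal-energy nonscattering solution, a critical element, whose field $\psi(t)$ has a trajectory precompact in $L^2$ modulo the symmetry group.

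The third step is rigidity, which I expect to be the principal obstacle. Using the compactness of the trajectory together with a virial/Morawetz-type monotonicity identity for the Schr\"odinger map flow, one aims to show the critical element must be static, i.e.\ a finite-energy harmonic map. For $\H^2$ this is an immediate contradiction, since negative curvature forbids nontrivial finite-energy harmonic maps; for $\S^2$ the lowest nonzero harmonic-map energy is exactly $4\pi$ (the stereographic projection), contradicting $E_c<4\pi$. This would close both parts of the conjecture.

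The hard part is twofold, and it is exactly what separates the general conjecture from the equivariant theorem established in this paper. First, the Schr\"odinger map equation lacks the clean energy--momentum conservation and the strong Morawetz estimates available for wave maps, so extracting a usable monotonicity formula that survives the gauge transformation and controls the flow \emph{without} assuming symmetry is the central analytic difficulty. Second, the gauged equation is only covariantly, not genuinely, semilinear: the connection coefficients $A_m$ are themselves nonlocal quadratic functionals of $\psi$, so decoupling interacting profiles, proving asymptotic orthogonality of their energies, and controlling these low-regularity magnetic potentials in the full non-symmetric setting all require delicate multilinear estimates that the equivariant reduction circumvents.
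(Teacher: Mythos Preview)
The statement you are addressing is a \emph{conjecture}, not a theorem. The paper does not prove it and says so explicitly: immediately after stating the conjecture the authors write ``In full generality this remains an open problem.'' There is therefore no ``paper's own proof'' to compare against. What the paper actually proves is the much more restricted Theorem~\ref{MT}: part (a) of the conjecture \emph{restricted to $m$-equivariant data with $m\ne 0$}. Part (b) is not addressed in this paper at all (the equivariant $\S^2$ case is in the companion paper \cite{BIKT2}).

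Your outline is a reasonable description of the Kenig--Merle strategy, and indeed it is the strategy the paper follows in the equivariant setting. But you have not proposed a proof; you have proposed a plan and then, in your final two paragraphs, correctly identified why the plan does not currently go through without symmetry. Those obstacles --- the absence of a usable Morawetz/virial identity in the non-equivariant gauged formulation, and the difficulty of running profile decomposition and perturbation theory with nonlocal magnetic potentials in full generality --- are not technicalities you have deferred; they are precisely the content of the open problem. In particular, the rigidity step in the paper (Section~\ref{MOM}) relies on the equivariant reduction to a radial ODE system with the explicit conserved quantity $A_2^2-|\psi_2|^2=m^2$ and the pointwise positivity \eqref{Gpos}, none of which has an obvious analogue without equivariance. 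So what you have written is an accurate summary of why the conjecture is believed and why it is hard, but it is not a proof, and the paper does not claim one either.
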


In full generality this remains an open problem. Recently, some
progress was made for the problem with large data in the case of
$\S^2$. Smith established in \cite{Sm} a conditional result for global
existence of smooth Schr\"odinger maps with energy $ < 4\pi$.

In this article we confine ourselves to a class of {\em equivariant}
Schr\"odinger maps.  These are indexed by an integer $m$ called the
equivariance class, and consist of maps of the form
\begin{equation} \label{equiv} u(r,\theta) = e^{m \theta R} \bar{u}(r)
\end{equation} 
Here $R$ is the generator of horizontal rotations, which can be
interpreted as a matrix or, equivalently, as the operator below
\[
R = \left( \begin{array}{ccc} 0 & -1 & 0 \\ 1 & 0 & 0 \\ 0 & 0 &
    0 \end{array} \right) , \qquad R u = \overrightarrow{k} \times_\mu
u.
\]
Here and thereafter we denote by $\overrightarrow{i},
\overrightarrow{j}, \overrightarrow{k}$ the standard orthonormal basis
in $\R^3$, i.e. the vectors with coordinate representation $(1,0,0),
(0,1,0)$ respectively $(0,0,1)$. The case $m=0$ corresponds to radial
symmetry.

The energy for equivariant maps takes the following form:
\begin{equation} \label{energy} E(u) = \pi \int_{0}^\infty \left(
    |\partial_r \bar{u}(r)|^2_\mu +
    \frac{m^2}{r^2}(\bar{u}_1^2(r)+\bar{u}_2^2(r)) \right) r dr
\end{equation}
If $m \ne 0$, then $E(u) < \infty$ implies better information about
the behavior of $u$ versus the radial case $m=0$, in particular it
implies that $u_1$ and $u_2$ have limit zero as $r \rightarrow 0$ and
$r \rightarrow \infty$.

The global regularity question in the case $m=0$ and target $\S^2$,
corresponding to radial symmetry, has been considered recently by
Gustafson and Koo, see \cite{gk}. The global regularity in the case $m
=1$ and target $\S^2$ was considered by the authors in \cite{BIKT2}
where they have shown that the $1$-equivariant solutions of \eqref{SM}
with energy $< 4 \pi$ are globally well-posed.

In this paper we consider the case when the target manifold is $\H^2$
and prove the following

\begin{theo} \label{MT} i) Let $\mu=-1$, $m \ne 0$ and $u_0 \in \dot
  H^1 \cap \dot H^3$ be an $m$-equivariant function. Then \eqref{SM}
  has a unique global in time solution $u \in L^\infty(\R:\dot H^1
  \cap \dot H^3)$.  In addition $\nabla u$, in a particular frame,
  scatters to the free solution of a particular linear Schr\"odinger
  equation.
  
  ii) The above solution is Lipschitz continuous with respect to the
  initial data in $\dot H^1$. In particular if $u_0 \in \dot H^1$ is a
  $m$-equivariant function, $m \ne 0$ then \eqref{SM} has a global
  solution $u(t) \in L^\infty \dot H^1$ defined as the unique limit of
  smooth solutions in $\dot H^1 \cap \dot H^3$. Scattering also holds
for this solution in a suitable frame.
\end{theo}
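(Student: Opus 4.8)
The plan is to convert the geometric equation \eqref{SM} into a scalar nonlinear Schr\"odinger equation for a differentiated field and then run a critical well-posedness and scattering argument for that equation. Concretely, I would write the $m$-equivariant map in the form \eqref{equiv}, parametrize $\bar u(r)$ by an angle-type function adapted to $\H^2$, and introduce an orthonormal frame $(v,w)$ along the pullback tangent bundle together with the complex derivative field $\psi = \langle \partial_r u, v\rangle + i\langle \partial_r u, w\rangle$. Fixing the frame by a gauge condition compatible with the equivariant symmetry (a caloric-type gauge reduced to the radial variable) produces a closed evolution $i\partial_t \psi = H\psi + \mathcal{N}(\psi)$, where $\mathcal{N}$ collects the cubic and higher nonlinearities and $H$ is a radial Schr\"odinger operator built from the angular momentum $m^2/r^2$ and the curvature of $\H^2$. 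At fixed time the energy \eqref{energy} is comparable to $\|\psi\|_{L^2}^2$, so the problem is naturally posed at the $\dot H^1$ level for $u$ and is energy critical, exactly as announced.

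The decisive structural gain over the $\S^2$ problem is that $\H^2$ is negatively curved and carries \emph{no} nontrivial finite energy harmonic maps, so there is no soliton to act as a threshold obstruction at any energy. After the gauge reduction this manifests at the linear level: the leading part of $H$ is a fixed, $m$-dependent, repulsive operator of the type $-\partial_r^2 - r^{-1}\partial_r + m^2 r^{-2} + (\text{decaying, nonnegative})$, with purely absolutely continuous spectrum, no eigenvalues, and no zero-energy resonance, while the genuine solution dependence is pushed into lower order and nonlinear terms. I would therefore first establish a robust family of linear bounds for $e^{-itH}$ — global in time Strichartz estimates, local smoothing, and weighted estimates — with constants controlled solely by the conserved energy. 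The non-trapping spectral picture forced by the negative curvature is precisely what makes these estimates hold uniformly along the entire evolution.

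With the linear theory in place, local well-posedness in the critical space follows from a contraction argument, and the heart of the matter is the \emph{global} a priori bound on the Strichartz norm of $\psi$. Here I would run a bootstrap: assuming a finite global Strichartz norm, I estimate $\mathcal{N}(\psi)$ and close the inequality using conservation of energy together with a virial/Morawetz monotonicity identity whose favorable sign is a direct consequence of the negative curvature. This rules out energy concentration at every scale and upgrades the local bound to a global one \emph{without} a concentration--compactness dichotomy, since there is no minimal blow-up profile. Once the global Strichartz bound holds, scattering is immediate: the Duhamel integral converges as $t\to\pm\infty$, so $\psi(t)\to e^{-itH}\psi_\pm$ in the critical norm, which is exactly the assertion that $\nabla u$ scatters in the chosen frame, proving part (i).

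For part (ii) the same estimates are carried out in a form that is Lipschitz in the data at the $\dot H^1$ level: the difference of two solutions solves a linear equation whose source is controlled by the Strichartz norms already bounded in part (i), so the solution map $u_0\mapsto u$ is Lipschitz from $\dot H^1$ into $L^\infty_t\dot H^1$ on the smooth class $\dot H^1\cap\dot H^3$. Approximating an arbitrary $m$-equivariant $u_0\in\dot H^1$ by smooth data and invoking this Lipschitz bound, the corresponding solutions form a Cauchy sequence whose limit is the desired global $\dot H^1$ solution, and scattering passes to the limit. The main obstacle throughout is the global a priori bound in the third step: reconciling the energy-critical scaling, which leaves no room to spare, with the fact that the gauge leaves solution-dependent lower order terms in the linear operator, so that the linear estimates and the nonlinear bootstrap must be closed simultaneously. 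The geometric input — negative curvature and the resulting non-trapping, resonance-free spectral picture — is exactly what allows this loop to be closed.
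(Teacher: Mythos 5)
Your overall architecture (equivariant reduction, frame/gauge, differentiated field, critical Cauchy theory, scattering from a global spacetime bound, density argument for part (ii)) matches the paper's in outline, but there is one genuine gap and a couple of structural inaccuracies worth flagging.

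The gap is your third step. After the gauge reduction the problem for the differentiated field is \emph{mass}-critical, not energy-critical: by \eqref{basicpsi} one has $\|\psi^\pm\|_{L^2}^2 = E(u)/\pi$, and the scaling $\psi \mapsto \lambda^{-1}\psi(\lambda^{-1}r,\lambda^{-2}t)$ preserves the $L^2$ norm in two dimensions. For such a scaling-critical problem with large data, a direct bootstrap on the global Strichartz norm cannot close: the nonlinear estimate returns the Strichartz norm at full strength with no small parameter, and conservation of $\|\psi\|_{L^2}$ gives no subcritical quantity to break the scaling. A virial/Morawetz monotonicity with favorable sign does not by itself control the critical $L^4_{t,x}$ norm of a general large solution, because its boundary and error terms cannot be bounded without knowing that the solution's mass stays uniformly localized at a single moving scale. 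That uniform localization is precisely what the paper manufactures: it runs the Kenig--Merle concentration--compactness scheme (Propositions \ref{lprof} and \ref{pcc}, Theorems \ref{cc} and \ref{lambdacontrol}) to extract a minimal-mass almost-periodic critical element with a scale function $\lambda(t)$ and the tightness property \eqref{psicon}, and only \emph{then} applies the localized virial identities \eqref{vir1} and \eqref{vir3} on the compact core of that element to reach a contradiction. The negative curvature enters exactly where you expect --- the sign $\mu=-1$ forces $A_2\ge m$ and hence the coercivity \eqref{Gpos} of $G$, and removes any mass threshold --- but it is used to kill the critical element, not to bypass its construction. So "no harmonic maps, hence no concentration--compactness needed" is not a valid inference; the dichotomy is still required, it merely has an empty soliton branch.

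Two smaller points. First, a single field built from $\partial_r u$ does not suffice: the paper needs both $\psi_1$ and $\psi_2$ (radial and angular derivatives), and the linear system for $(\psi_1,\psi_2/r)$ is coupled at leading order; only the combinations $\psi^\pm=\psi_1\mp i\psi_2/r$ diagonalize it into decoupled equations with the explicit operators $H_{m\pm1}$, for which the Strichartz estimates are just the standard two-dimensional ones read through the extensions $R_{m\pm1}$. Second, the paper uses the Coulomb gauge rather than a caloric-type gauge; in the equivariant setting the Coulomb gauge already eliminates the derivative nonlinearities, which is what makes the whole $L^2$-based Cauchy theory possible, and a substantial portion of the actual proof (Lemma \ref{a2p2c}, Proposition \ref{recprop}, Theorem \ref{CT-CC}) is devoted to reconstructing the map and propagating the compatibility condition \eqref{compnew} --- an elliptic/ODE component your outline omits entirely but which is needed both to return from $\psi^\pm$ to $u$ and to make the profile decomposition geometrically meaningful.
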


The statement of the scattering cannot be made precise at this
time. We need to introduce a moving frame on $\H^2$, write the
equation of the coordinates of $\nabla u$ in that frame and identify
there the linear part of the Schr\"odinger equation. This will be
carried out in Section \ref{seccoulomb}.

The result in Theorem \ref{MT} is natural since the failure of the
well-posedness of \eqref{SM} is expected to be closely related to the
existence of finite energy harmonic maps.  In the case of $\H^2$ there
are no harmonic maps, so no obstacles are present.  In the case of
$\S^2$ ($\mu=1$) the lowest energy nontrivial is $4\pi$ and it was
shown in \cite{MRR} that blow-up can occur for maps with energy
$4\pi+$.

\subsection{Definitions and notations.}
\label{defnot}

While at fixed time our maps into the sphere or the hyperbolic space
are functions defined on $\R^2$, the equivariance condition allows us
to reduce our analysis to functions of a single variable $|x|=r \in
[0,\infty)$.  One such instance is exhibited in \eqref{equiv} where to
each equivariant map $u$ we naturally associate its radial component
$\bar u$.  Some other functions will turn out to be radial by
definition, see, for instance, all the gauge elements in Section
\ref{seccoulomb}.  We agree to identify such radial functions with the
corresponding one dimensional functions of $r$.  Some of these
functions are complex valued, and this convention allows us to use the
bar notation with the standard meaning, i.e. the complex conjugate.

Even though we work mainly with functions of a single spatial variable
$r$, they originate in two dimensions. Therefore, it is natural to
make the convention that for the one dimensional functions all the
Lebesgue integrals and spaces are with respect to the $rdr$ measure,
unless otherwise specified.
 
Since equivariant functions are easily reduced to their
one-dimensional companions via \eqref{equiv}, we introduce the one
dimensional equivariant version of $\dot H^1$,
\begin{equation}\label{defhe}
  \| f \|_{\dHe}^2 = \| \partial_r f\|_{L^2(rdr)}^2 + m^2 \| r^{-1} f \|_{L^2(rdr)}^2.
\end{equation}
This is natural since for functions $u: \R^2 \to \R^2$ with
$u(r,\theta) = e^{m \theta R} \bar u(r)$ (here $R u = \overrightarrow{k}
\times u$ or, as a matrix, it is the upper left $2 \times 2$ block of
the original matrix $R$) we have
\[
\| u \|_{\dot H^1}= (2\pi)^\frac12 \| \bar u \|_{\dHe}.
\]
It is important to note that functions in $\dHe$ enjoy the following
properties: they are continuous and have limit $0$ both at $r=0$ and
$r=\infty$, see \cite{gkt1} for a proof.

We introduce $\dot H_e^{-1}$ as the dual space to $\dHe$ with respect
to the $L^2$ pairing, i.e.
\[
\| f \|_{\dot H^{-1}_e} = \sup_{\| \phi \|_{\dHe}=1} \lan f,\phi \ran
\]
The elements from $\dot H^{-1}_e$ can be represented in the form 
$f = \partial_r f_1 + r^{-1} f_2$ with $f_1,f_2 \in L^2$.

Three operators which are often used on radial functions are
$[\partial_r]^{-1}, [r^{-m} \bar \partial_r]^{-1}$ and
$[r \partial_r]^{-1}$ defined as
\[
\begin{split}
  [\partial_r]^{-1} f(r) & = - \int_{r}^\infty f(s) ds, \quad  [r^{-m}\bar \partial_r]^{-1} f(r) = \int_{0}^r f(s) s^m ds \\
  [r \partial_r]^{-1}f(r) & = - \int_{r}^\infty \frac{1}s f(s) ds
\end{split}
\]
A direct argument shows that
\begin{equation} \label{rdrm}
  \begin{split}
    & \| [r\partial_r]^{-1}f \|_{L^p} \lesssim_p \| f \|_{L^p}, \qquad 1 \leq p < \infty, \\
    & \| r^{-m-1} [r^{-m} \bar \partial_r]^{-1}f \|_{L^p} \lesssim_p \| f \|_{L^p}, \qquad 1 < p \leq \infty, \\
    & \| [\partial_r]^{-1}f \|_{L^2} \lesssim \| f \|_{L^1}.
  \end{split}
\end{equation}

The equivariance properties of the functions involved in this paper
requires that the two-dimensional Fourier calculus is replaced by the
Hankel calculus for one-dimensional functions which we recall below.

For $k \geq 0$ integer, let $J_k$ be the Bessel function of the first
kind,
\[
J_k(r)= \frac1\pi \int_0^\pi \cos(n\tau-r\sin\tau) d\tau
\]
If $H_k=\partial_r^2 + \frac1r \partial_r - \frac{k^2}{r^2}$, then
$J_k$ solves $H_k J_k = -J_k$.

We recall some formulas involving Bessel functions
\begin{equation} \label{derB}
  \partial_r J_k = \frac12 (J_{k-1}-J_{k+1}), \quad 
  (r^{-1}\partial_r)^m \left( \frac{J_k}{r^k} \right) = (-1)^m \frac{J_{k+m}}{r^{k+m}}, 
\end{equation}
where $J_{-k}=(-1)^k J_k$.

For each $k \geq 0$ integer one defines the Hankel transform $\mathcal
F_k$ by
\[
\mathcal F_k f (\xi) = \int_0^\infty J_k(r \xi) f(r) rdr
\]
The inversion formula holds true
\[
f(r) = \int_0^\infty J_k(r\xi) \mathcal F_k f(\xi) \xi d\xi
\]
The Plancherel formula holds true, hence in particular, the Hankel
transform is an isometry.

For a radial function $f$ and for an integer $k$ we define its
two-dimensional extension
\begin{equation} \label{Ddef} R_k(r,\theta)=e^{ik\theta} f(r)
\end{equation}
If $f \in L^2$ then $R_k f \in L^2$; if $R_k f$ has additional
regularity, this is easily read in terms of $\mathcal F_k f$.  Indeed
for any $s \geq 0$ integer the following holds true
\begin{equation} \label{Freg} R_k f \in \dot H^s \Leftrightarrow \xi^s
  \mathcal F_k f \in L^2
\end{equation}
For even values of $s$ this is a consequence of $\Delta R_k f= R_k H_k
f$, while for odd values of $s$ it follows by interpolation.

By direct computation, we also have that for $k \ne 0$,
\begin{equation} \label{Frg1} R_k f \in \dot H^1 \Leftrightarrow f \in
  \dHe, \qquad R_0 f \in \dot H^1 \Leftrightarrow \partial_r f \in
  L^2.
\end{equation}

We will use the following result
\begin{lema} \label{LBE}

  i) If $f \in L^2$ is such that $H_k f \in L^2$, with $k \ne 1$, then
  the following holds true
  \[
  \|\partial_r^2 f\|_{L^2} + \| \frac{\partial_r f}{r} \|_{L^2} + k
  \|\frac{f}{r^2} \|_{L^2} \les \| H_k f \|_{L^2}
  \]
  
  ii) If $f \in L^2$ is such that $H_1 f \in L^2$, then the following
  holds true
  \[
  \|\partial_r^2 f\|_{L^2} + \| \frac{\partial_r f}{r} - \frac{f}{r^2}
  \|_{L^2} \les \| H_1 f \|_{L^2}
  \]
  iii) If $f \in L^2$ is such that $\partial_r H_0 f \in L^2$, then
  the following holds true
  \[
  \|\partial_r^3 f\|_{L^2} + \| \frac{\partial_r^2 f}{r} -
  \frac{\partial_r f}{r^2} \|_{L^2} \les \| \partial_r H_0 f \|_{L^2}
  \]
  iv) If $f \in L^2$ is such that $H_1 f \in \dHe$, then the following
  holds true
  \[
  \|\partial_r^3 f\|_{L^2} + \| \frac{\partial_r^2 f}{r} \|_{L^2} + \|
  \frac{\partial_r f}{r^2} - \frac{f}{r^3} \|_{L^2} \les \| H_1 f
  \|_{\dHe}
  \]
  v) If $f \in L^2$ is such that $H_2 f \in \dHe$, then the following
  holds true
  \[
  \|\partial_r^3 f\|_{L^2} +\|\frac{\partial_r^2 f}{r} -
  \frac{\partial_r f}{r^2} \|_{L^2} + \|\frac{ \partial_r f}{r^2} -
  \frac{2 f}{r^3} \|_{L^2} \les \|  H_2 f \|_{\dHe}
  \]
  vi) If $f \in L^2$ is such that $H_k f \in \dHe$, with $k \geq 3$,
  then the following holds true
  \[
  \|\partial_r^3 f\|_{L^2} + \|\frac{\partial_r^2 f}{r}\|_{L^2} +\|
  \frac{\partial_r f}{r^2}\|_{L^2} + \| \frac{f}{r^3} \|_{L^2} \les \|
  H_k f \|_{\dHe}
  \]
  vii) If $f, \partial_r f \in L^2$, then for any $2 \leq p < +\infty$
  the following holds true
  \[
  \| f \|_{L^p} \les_p \| \partial_r f \|_{L^2} + \| f \|_{L^2}
  \]
  viii) If $f, H_k f \in L^2$, with $k \geq 0$, then for any $2 \leq p
  < +\infty$ the following holds true
  \[
  \| \partial_r f \|_{L^p} \les_p \| H_k f \|_{L^2} + \| f \|_{L^2}
  \]
 
\end{lema}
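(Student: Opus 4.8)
The plan is to push every estimate onto the Hankel side, where the first-order operators $A_k^- := \partial_r - \tfrac{k}{r}$ and $A_k^+ := \partial_r + \tfrac{k}{r}$ become shifts in the Bessel index. Indeed, the recurrences \eqref{derB} give $A_k^- J_k(r\xi) = -\xi J_{k+1}(r\xi)$ and $A_k^+ J_k(r\xi) = \xi J_{k-1}(r\xi)$, so that
\[
\mathcal{F}_{k+1}(A_k^- f) = -\xi\,\mathcal{F}_k f, \qquad \mathcal{F}_{k-1}(A_k^+ f) = \xi\,\mathcal{F}_k f, \qquad \mathcal{F}_k(H_k f) = -\xi^2\,\mathcal{F}_k f .
\]
Composing two such operators and using that the Hankel transform is an isometry yields the basic second-order identity
\[
\| A_{k+1}^- A_k^- f \|_{L^2} = \| A_{k-1}^+ A_k^+ f \|_{L^2} = \| \xi^2 \mathcal{F}_k f \|_{L^2} = \| H_k f \|_{L^2}.
\]
(Equivalently, these are the $\partial^2$ and $\bar\partial^2$ components of $\nabla^2 R_k f$, controlled by $\Delta R_k f = R_k H_k f$.)

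For parts (i) and (ii) I would then expand
\[
A_{k\mp1}^\pm A_k^\pm f = \partial_r^2 f \pm \tfrac{2k\mp1}{r}\partial_r f + \tfrac{k^2\mp 2k}{r^2} f, \qquad H_k f = \partial_r^2 f + \tfrac1r\partial_r f - \tfrac{k^2}{r^2} f,
\]
viewing the three operators $A_{k-1}^+A_k^+$, $A_{k+1}^-A_k^-$, $H_k$ as linear combinations of the three quantities $\partial_r^2 f$, $\tfrac1r\partial_r f$, $\tfrac{1}{r^2}f$. The determinant of the resulting $3\times 3$ coefficient matrix is a constant multiple of $k(k^2-1)$. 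Hence for $k=0$ (where the last column is absent) and for $k\ge 2$ the system is invertible, and Cramer's rule recovers each quantity with uniform-in-$k$ bounds; the solved expression for $\tfrac{1}{r^2}f$ comes with a factor $k^{-2}$, which produces the weight $k$ in front of $\|\tfrac{f}{r^2}\|$ in part (i). For $k=1$ the matrix is singular: the two controllable outputs $H_1 f = \partial_r^2 f + \partial_r(\tfrac fr)$ and $A_2^-A_1^- f = \partial_r^2 f - 3\,\partial_r(\tfrac fr)$ span only $\partial_r^2 f$ and $\partial_r(\tfrac fr) = \tfrac{\partial_r f}{r} - \tfrac{f}{r^2}$, which is exactly the content of part (ii). This degeneracy at the low indices is the main obstacle and the reason the statement splits into cases; I expect the genuine bookkeeping to be concentrated here and in (iii)--(v).

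Parts (iii)--(vi) are the same scheme one order higher. Triple compositions of the $A_k^\pm$ Hankel-conjugate to $\pm\xi^3\mathcal{F}_k f$, and via \eqref{Freg} and \eqref{Frg1} one identifies $\|\xi^3\mathcal{F}_k f\|_{L^2} \sim \|R_k f\|_{\dot H^3(\R^2)} = \|\nabla R_k(H_k f)\|_{L^2(\R^2)}$, which for $k=0$ is comparable to $\|\partial_r H_0 f\|_{L^2}$ and for $k\ge1$ to the $\dHe$-type norm (with the relevant index) of $H_k f$. Expanding these cubic operators into the four quantities $\partial_r^3 f$, $\tfrac{\partial_r^2 f}{r}$, $\tfrac{\partial_r f}{r^2}$, $\tfrac{f}{r^3}$ and inverting, the rank of the available system again drops at the small indices $k=0,1,2$; this is precisely why (iii)--(vi) control progressively larger families of these quantities — only the indicated differences for $k=0,1,2$, and all four individually once $k\ge 3$.

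Finally, (vii) is the radial trace of the two-dimensional embedding $H^1(\R^2)\hookrightarrow L^p(\R^2)$, $2\le p<\infty$, applied to $R_0 f$, using $\|R_0 f\|_{L^p(\R^2)}\sim\|f\|_{L^p}$, $\|R_0 f\|_{\dot H^1}\sim\|\partial_r f\|_{L^2}$ and $\|R_0 f\|_{L^2}\sim\|f\|_{L^2}$. For (viii) I would apply (vii) to $\partial_r f$: integration by parts (the boundary terms vanishing by the endpoint decay of such functions) gives $\|\partial_r f\|_{L^2}^2 + k^2\|\tfrac fr\|_{L^2}^2 = -\langle H_k f, f\rangle \lesssim \|H_k f\|_{L^2}\|f\|_{L^2}$, while part (i)/(ii) gives $\|\partial_r^2 f\|_{L^2}\lesssim \|H_k f\|_{L^2}$; thus both $\|\partial_r f\|_{L^2}$ and $\|\partial_r(\partial_r f)\|_{L^2}$ are dominated by $\|H_k f\|_{L^2}+\|f\|_{L^2}$, and (vii) closes the estimate. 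Everything outside the low-index degeneracies is the intertwining identity plus elementary linear algebra.
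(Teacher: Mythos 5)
Your proposal is correct and follows essentially the same route as the paper: both arguments move to the Hankel side and use the Bessel recurrences \eqref{derB} to write the relevant second- and third-order radial operators as index-shifted multiplier operators with symbol $\xi^2$ or $\xi^3$, then extract the individual terms by linear algebra, with parts (vii)--(viii) reduced to the two-dimensional Sobolev embedding. Your packaging via the ladder operators $\partial_r \mp k/r$ and the determinant $\propto k(k^2-1)$ is a tidier and more systematic presentation of the same computation (and it makes explicit why the cases $k=0,1,2$ degenerate, which the paper leaves implicit), but it is not a different method.
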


\begin{proof} Part i) for $k \in \{0,2\}$ are established in Lemma 1.3
  in \cite{BIKT2}, and the general result for all $k \geq 3$ follows
  along the same lines.

  For part ii) we use the inversion formula for $f$ and \eqref{derB}
  to compute
  \[
  \partial_r^2 f = \int (J_3 - 3 J_1)(r\xi) \xi^2 \mathcal{F}_1 f(\xi)
  \xi d\xi
  \]
  and the first part of the estimate follows. The estimate for the
  second term follows from the form of $H_1 f$.

  For part iii) we proceed as above, i.e. use the inversion formula
  for $f$ and \eqref{derB} to write
  \[
  \partial_r^3 f = \int (J_3 - 3 J_1)(r\xi) \xi^3 \mathcal{F}_0 f(\xi)
  \xi d\xi
  \]
  and conclude with the estimate for $\| \partial_r^3 f \|_{L^2}$,
  while the estimate for the second term follows from the expression
  of $\partial_r H_0$.

  Parts iv)-vi) follow in a similar manner by using the Hankel
  transform and \eqref{derB} to derive the estimates.  The details are
  left to the reader.

  vii) and viii) are consequences of the standard Sobolev embeddings.

\end{proof}

\subsection{A few calculus rules}

We recall that given $\mu= \pm 1$ and two vectors
$v={}^t(v_1,v_2,v_3)$ and $w={}^t(w_1,w_2,w_3)$ in $\mathbb{R}^3$,
their inner product is defined as
\begin{equation}\label{defi2}
  v\cdot_\mu w=\mathbf{g}_{-1}(v,w)={}^tv\cdot\eta_\mu\cdot w=v_1w_1+v_2w_2 +\pm v_3 w_3,
\end{equation}
where $\eta_\mu=\mathrm{diag}(1,1,\mu)$. We define also the cross
product
\begin{equation}\label{defi3}
  v\times_\mu w:=\eta_\mu\cdot(v\times w), 
\end{equation}
where $v\times w$ denotes the usual vector product of vectors in
$\mathbb{R}^3$. Simple computations show that, for $\mu=\pm 1$ and
$v,w\in\mathbb{R}^3$
\begin{equation}\label{products}
  \begin{split}
    &v\cdot_\mu(v\times_\mu w)=w\cdot_\mu(v\times_\mu w)=0,\\
    &(v\times_\mu w)\cdot_\mu(v\times_\mu w)=\mu(v\cdot_\mu v)(w\cdot_\mu w)-\mu(v\cdot_\mu w)^2\\
    & (a \times_\mu b) \cdot_\mu c = a \cdot_\mu (b \times_\mu c)
  \end{split}
\end{equation}

\subsection{Energy estimates} \label{enest} In this section we derive
properties of $u$ from the finiteness of its energy $E(u)$ in
\eqref{energy} in the case $\mu=-1$ (in the case $\mu=1$ the
corresponding estimates are trivial as all terms come with $+$ sign).
We recall that
\[
E(u) = \pi \int_{0}^\infty \left( |\partial_r \bar{u}_1(r)|^2 +
  |\partial_r \bar{u}_2(r)|^2 - |\partial_r \bar{u}_3(r)|^2 +
  \frac{m^2}{r^2}(\bar{u}_1^2(r)+\bar{u}_2^2(r)) \right) r dr
\]
Since $u_1 \partial_r \bar u_1 + u_2 \partial_r \bar u_2
=u_3 \partial_r \bar u_3 $ and $\bar u_3^2=1+ \bar u_1^2 + \bar u_2^2$
it follows that
\[
E(u) = \pi \int_{0}^\infty \left( \frac{|\partial_r \bar{u}_1(r)|^2 +
    |\partial_r \bar{u}_2(r)|^2}{\bar u_3^2} + (\frac{\bar
    u_2 \partial_r \bar u_1 - \bar u_2 \partial_r \bar u_1}{\bar
    u_3})^2 + \frac{m^2}{r^2}(\bar{u}_1^2(r)+\bar{u}_2^2(r)) \right) r
dr
\]
We also have that
\[
\begin{split}
  \bar u_1^2(r) + \bar u_2^2(r) - \bar u_1^2(1) - \bar u_2^2(1) & = \int_1^r \partial_r (\bar u_1^2 + \bar u_2^2) ds \\
  & \les \int_1^r (|\bar u_1|+|\bar u_2|)(|\partial_r \bar u_1|+|\partial_r \bar u_2|) ds \\
  & \les \int_1^r (|\bar u_1|+|\bar u_2|) \frac{|\bar u_1|+|\bar u_2|}s \frac{|\partial_r \bar u_1|+|\partial_r \bar u_2|}{\bar u_3} s ds \\
  & \les \sup_{s \in [1,r]} (|\bar u_1(s)|+|\bar u_2(s)|) E(u)
\end{split}
\]
from which we conclude that $\sup_{r \in (0, \infty)}
|u_1(r)|+|u_2(r)| \les \bar u_1(1) + \bar u_2(1) + E(u)$.  Therefore
$\sup_{r \in (0, \infty)} | \bar u_3(r)| \les m + \bar u_1(1) + \bar
u_2(1) + E(u)$, hence from the last expression of $E(u)$ we obtain
that $\bar u_1, \bar u_2 \in \dHe$. In particular it follows that
$\bar u_1(0)=\bar u_2(0)=0$ (in the sense that the limits exists and
equal $0$), hence rewriting the above argument on $(0,r]$ instead
gives $|\bar u_1|_{L^\infty} + |\bar u_2|_{L^\infty} \les E(u)$,
$|\bar u_3|_{L^\infty} \les m+E(u)$.  Recalling the last expression of
$E(u)$ we obtain
\[
\| \bar u_1 \|_{\dHe} + \| \bar u_2 \|_{\dHe} \les E(u)^\frac12
(m+E(u))
\]
In addition we obtain $\bar u_3 -1 \in \dHe$ with
\[
\| \bar u_3 -1 \|_{\dHe} \les E(u)^\frac12 (m+E(u))
\]

\section{The Coulomb gauge representation of the equation}
\label{seccoulomb}

In this section we rewrite the Schr\"odinger map equation for
equivariant solutions in a gauge form.  This approach originates in
the work of Chang, Shatah, Uhlenbeck~\cite{csu}. However, our analysis
is closer to the one in \cite{bik} and \cite{BT-SSM}. The computations
in subsections \ref{CG} and \ref{COG} follow exactly the same lines as the one used in \cite{BIKT2}.
Then we fix $\mu=-1$ as the analysis becomes more specific to this case. 

\subsection{The Coulomb gauge} \label{CG}

The computations below are at the formal level as we are not yet
concerned with the regularity of the terms involved in writing various
identities and equations. Implicitly we use only the information $u
\in \dot H^1$. In subsection \ref{sreg} we prove that if $u \in \dot
H^3$ then all the gauge elements, their compatibility relations and
the equations they obey are meaningful in the sense that they involve
terms which are at least at the level of $L^2$.

We let the differentiation operators
$\partial_0,\partial_1,\partial_2$ stand for
$\partial_t, \partial_r, \partial_\theta$ respectively. Our strategy
will be to replace the equation for the Schr\"odinger map $u$ with
equations for its derivatives $\partial_1 u$, $\partial_2 u$ expressed
in an orthonormal frame $v,w \in T_u \S_\mu$. We choose $v \in T_u \S_\mu$
such that $v \cdot_\mu v=1$ and define $w = u \times_\mu v \in T_u \S_\mu$; to
summarize
\begin{equation}
  v \cdot_\mu v=1, \qquad v \cdot_\mu u =0, \qquad w=u \times_\mu v
\end{equation}
From this, we obtain
\begin{equation}
  w \cdot_\mu v=0, \qquad w \cdot_\mu w=1, \qquad v \times_\mu w= \mu u, \qquad w \times_\mu u=v
\end{equation}
Since $u$ is $m$-equivariant it is natural to work with
$m$-equivariant frames, i.e.
\[
v = e^{m \theta R} \bar{v}(r), \qquad w = e^{m \theta R} \bar{w}(r).
\]
where $\bar v, \bar w$ (as well as $\bar u$ from \eqref{equiv}) are
unit vectors in $\R^3$.

Given such a frame we introduce the differentiated fields $\psi_k$ and
the connection coefficients $A_k$ by
\begin{equation}\label{connection}
  \begin{split}
    \psi_k = \partial_k u \cdot_\mu v + i \partial_k u \cdot_\mu w, \qquad A_k
    = \partial_k v \cdot w.
  \end{split}
\end{equation}
Due to the equivariance of $(u,v,w)$ it follows that both $\psi_k$ and
$A_k$ are spherically symmetric (therefore subject to the conventions
made in Section \ref{defnot}). Conversely, given $\psi_k$ and $A_k$ we
can return to the frame $(u,v,w)$ via the ODE system:
\begin{equation}
  \label{return}
  \left\{ \begin{array}{l}
      \partial_k u = (\Re {\psi_k}) v  + (\Im{\psi_k}) w
      \cr
      \partial_k v = - \mu (\Re{\psi_k}) u + A_k w
      \cr
      \partial_k w = - \mu (\Im{\psi_k}) u - A_k v
    \end{array} \right.
\end{equation}

If we introduce the covariant differentiation
\[
D_k = \partial_k + i A_k, \ \ k \in \{0,1,2\}
\]
it is a straightforward computation to check the compatibility
conditions:
\begin{equation} \label{compat} D_l \psi_k = D_k \psi_l, \ \ \
  l,k=0,1,2.
\end{equation}
The curvature of this connection is given by
\begin{equation}\label{curb}
  D_l D_k - D_k D_l = i(\partial_l A_k - \partial_k
  A_l) = i \mu \Im{(\psi_l \bar{\psi}_k)}, \ \ \  l,k=0,1,2.
\end{equation}
An important geometric feature is that $\psi_2, A_2$ are closely
related to the original map. Precisely, for $A_2$ we have:
\begin{equation}
  A_2 = m ( \overrightarrow{k} \times_\mu v) \cdot_\mu w=  
 m \overrightarrow{k} \cdot_\mu (v \times_\mu w)= m \overrightarrow{k} \cdot_\mu  (\mu u) = m u_3
  \label{a2u3}\end{equation}
and, in a similar manner,
\begin{equation}
  \psi_2 = m \mu (w_3-iv_3)
  \label{psi2vw3}\end{equation}
Since the $(u,v,w)$ frame is orthonormal, it follows that $|\psi_2|^2
= m^2(u_1^2 + u_2^2)$ and the following important conservation law
\begin{equation} \label{cons} |\psi_2|^2 + \mu A_2^2 = \mu m^2
\end{equation}

Now we turn our attention to the choice of the $(\bar v,\bar w)$ frame
at $\theta = 0$. Here we have the freedom of an arbitrary rotation
depending on $t$ and $r$. In this article we will use the Coulomb
gauge, which for general maps $u$ has the form $\text{div } A = 0$.
In polar coordinates this is written as $\partial_1 A_1 + r^{-2}
\partial_2 A_2 = 0$. However, in the equivariant case $A_2$ is radial,
so we are left with a simpler formulation $A_1 = 0$, or equivalently
\begin{equation}
  \partial_r \bar v \cdot_\mu \bar w=0
  \label{coulomb}\end{equation}
which can be rearranged into a convenient ODE as follows
\begin{equation} \label{cgeq}
  \partial_r  \bar v = \mu (\bar v \cdot_\mu \bar u) \partial_r \bar u
  - \mu (\bar v \cdot_\mu \partial_r \bar u) \bar u
\end{equation}
The first term on the right vanishes and could be omitted, but it is
convenient to add it so that the above linear ODE is solved not only
by $\bar v$ and $\bar w$, but also by $\bar u$. Then we can write an
equation for the matrix $ \calO = (\bar v, \bar w,\bar u)$:
\begin{equation} \label{cgeq-m}
  \partial_r \calO = M \eta_\mu \calO, \qquad M = \partial_r \bar u \wedge \bar u : = 
  \partial_r \bar u \otimes \bar u - \bar u \otimes \partial_r \bar u
\end{equation}
with an antisymmetric matrix $M$.

An advantage of using the Coulomb gauge is that it makes the
derivative terms in the nonlinearity disappear. Unfortunately, this
only happens in the equivariant case, which is why in \cite{BIKT} we
had to use a different gauge, namely the caloric gauge.

The ODE \eqref{cgeq} needs to be initialized at some point.  A change
in the initialization leads to a multiplication of all of the $\psi_k$
by a unit sized complex number. This is irrelevant at fixed time, but
as the time varies we need to be careful and choose this
initialization uniformly with respect to $t$, in order to avoid
introducing a constant time dependent potential into the equations via
$A_0$.  Since in our results we start with data which converges
asymptotically to $\vec{k}$ as $r \to \infty$, and the solutions
continue to have this property, it is natural to fix the choice of
$\bar{v}$ and $\bar{w}$ at infinity,
\begin{equation}
  \label{bcvw}
  \lim_{r \to \infty} \bar v(r,t) = \vec{i} , \qquad \lim_{r \to \infty} 
  \bar w(r,t) = -\mu \vec{j}
\end{equation}

The existence of a unique solution $\bar v \in C((0,+\infty):\R^3)$ of
\eqref{cgeq} satisfying \eqref{bcvw} is standard, we skip the details.
Moreover the solution is continuous with respect to $u$
in the following sense
\begin{equation} \label{gcont}
\| \bar v- \bar{\tilde v} \|_{L^\infty} \les \| u - \tilde u \|_{\dot H^1}
\end{equation}

\subsection{ Schr\"odinger maps in the Coulomb gauge} \label{COG}

We are now prepared to write the evolution equations for the
differentiated fields $\psi_1$ and $\psi_2$ in \eqref{connection}
computed with respect to the Coulomb gauge.

Writing the Laplacian in polar coordinates, a direct computation using
the formulas \eqref{connection} shows that we can rewrite the
Schr\"odinger Map equation \eqref{SM} in the form
\begin{equation}\label{psizero}
  \psi_0 = i \left(D_1 \psi_1 + \frac{1}{r} \psi_1 + \frac{1}{r^2} D_2
    \psi_2\right)
\end{equation}
Applying the operators $D_1$ and $D_2$ to both sides of this equation
and using the relation \eqref{curb} for $l,k = 1,2$ we obtain
\begin{equation} \label{psiab}
  \begin{split}
    D_1 \psi_0 = & \ i\left( D_1(D_1+\frac1r) \psi_1 + \frac{1}{r^2}
      D_2 D_1 \psi_2\right) - \frac{\mu}{r^2} \Im{(\psi_1 \bar{\psi}_2)}
    \psi_2
    \\
    D_2 \psi_0 = & \ i\left( (D_1+\frac1r)D_2 \psi_1 + \frac{1}{r^2}
      D_2 D_2 \psi_2\right) - \mu \Im{(\psi_2 \bar{\psi}_1)} \psi_1
  \end{split}
\end{equation}
Using now \eqref{compat} for $(k,l)=(0,1)$ respectively $(k,l)=(0,2)$
on the left and for $(k,l)=(1,2)$ on the right we can derive the
evolution equations for $\psi_m$, $ m=1,2$:
\begin{equation} \label{dtpsiab}
  \begin{split}
    D_0 \psi_1 = & \ i\left( D_1(D_1+\frac1r) + \frac{1}{r^2} D_2
      D_2\right) \psi_1 - \frac{\mu}{r^2} \Im{(\psi_1 \bar{\psi}_2)}
    \psi_2
    \\
    D_0 \psi_2 = & \ i\left( (D_1+\frac1r)D_1 + \frac{1}{r^2} D_2
      D_2\right) \psi_2 - \mu \Im{(\psi_2 \bar{\psi}_1)} \psi_1
  \end{split}
\end{equation}
In our set-up all functions are radial and we are using the the
Coulomb gauge $A_1 = 0$. Then these equations take the simpler form
\begin{equation*} \label{smg}
  \begin{split}
    \partial_t \psi_1 + i A_0 \psi_1 = & i \Delta \psi_1 - i
    \frac{1}{r^2} A_2^2 \psi_1 -i \frac1{r^2} \psi_1
    +  \frac2{r^3} A_2 \psi_2 - \frac{\mu}{r^2} \Im{(\psi_1 \bar{\psi}_2)} \psi_2 \\
    \partial_t \psi_2 + i A_0 \psi_2 = & i \Delta \psi_2 - i
    \frac{1}{r^2} A_2^2 \psi_2 - \mu \Im{(\psi_2 \bar{\psi}_1)} \psi_1
  \end{split}
\end{equation*}
The two variables $\psi_1$ and $\psi_2$ are not independent.  Indeed,
the relations \eqref{compat} and \eqref{curb} for $(k,l)=(1,2)$ give
\begin{equation} \label{comp}
  \partial_r A_2= \mu \Im{(\psi_1 \bar{\psi}_2)}, \qquad \partial_r \psi_2 = i A_2
  \psi_1
\end{equation}
which at the same time describe the relation between $\psi_1$ and
$\psi_2$ and determine $A_2$.

From the compatibility relations involving $A_0$, we obtain
\begin{equation} \label{a0}
  \partial_r A_0= - \frac{\mu}{2r^2} \partial_r (r^2 |\psi_1|^2 - |\psi_2|^2)
\end{equation}
from which we derive
\begin{equation}
  A_0 = - \frac{\mu}2 \left( |\psi_1|^2 - \frac{1}{r^2}|\psi_2|^2\right)
  - \mu [r\partial_r]^{-1} \left( |\psi_1|^2 - \frac{1}{r^2}|\psi_2|^2\right) 
  \label{a0bis}\end{equation}
This is where the initialization of the Coulomb gauge at infinity is
important. It guarantees that $A_0 \in L^p$, provided that $|\psi_1|^2
- r^{-2}|\psi_2|^2 \in L^p$ for $1 \leq p < \infty$. In particular,
without any additional regularity assumptions, we know that $A_0 \in
L^1$.  A direct computation using integration by parts gives that
\begin{equation} \label{A0c} \int A_0(r) rdr =0.
\end{equation}

The system satisfied by $\psi_1$ and $\frac{\psi_2}r$ (this being in
fact the correct variable instead of $\psi_2$) is given by:
\begin{equation*}
  \begin{split}
    (i \partial_t + \Delta - \frac{m^2+1}{r^2}) \psi_1 + \frac{2 \mu mi}{r^2}
    \frac{\psi_2}r= & A_0 \psi_1 + \frac{A_2^2-m^2}{r^2} \psi_1
    + 2i \frac{A_2+\mu m}{r^3} \psi_2 - i \mu \Im{(\psi_1 \frac{\bar{\psi}_2}r}) \frac{\psi_2}r \\
    (i \partial_t + \Delta - \frac{m^2+1}{r^2}) \frac{\psi_2}{r} -
    \frac{2\mu mi}{r^2} \psi_1= & A_0 \frac{\psi_2}r + \frac{A_2^2-m^2}{r^2}
    \frac{\psi_2}r - 2i \frac{A_2+\mu m}{r^2} \psi_1- i \mu
    \Im{(\frac{\psi_2}r \bar{\psi}_1)} \psi_1
  \end{split}
\end{equation*}
The problem with this system is that its linear part is not decoupled.
This can be remedied by a change of variables. Indeed consider
\[
\psi^-=\psi_1 -i \frac{\psi_2}{r}, \qquad \psi^+=\psi_1 + i
\frac{\psi_2}{r}
\]
It turns out that $\psi^\pm$ satisfy a similar system (described
below) whose linear part is decoupled.  The relevance of the variables
$\psi^{\pm}$ comes also from the following reinterpretation.  If
$\W^{\pm}$ is defined as the vector
\[
\W^{\pm} = \partial_r u \pm \frac{1}{r} u \times \partial_\theta u \in T_u(\S_\mu)
\]
then $\psi^\pm$ is the representation of $W^\pm$ with respect to the
frame $(v,w)$. On the other hand, a direct computation leads to
\[
\begin{split}
  E(u) & = \pi \int_0^\infty \left( |\partial_r \bar{u}|^2 +
    \frac{m^2}{r^2} |\bar{u} \times
    R \bar{u}|^2 \right) rdr \\
  & = \pi \| \bar \W^\pm \|_{L^2}^2 \mp 2 \pi m (\bar u_3(\infty)- \bar
  u_3(0))
\end{split}
\]
where we recall that $u(r,\theta)= e^{m \theta R} \bar{u}(r)$ and
$\bar u_3(\infty)=\lim_{r \rightarrow \infty} \bar u_3(r), \bar
u_3(0)=\lim_{r \rightarrow 0} \bar u_3(r)$ are well-defined since
$\bar u_1, \bar u_2 \in \dHe$ and if $f \in \dHe$ then $\lim_{r
  \rightarrow 0} f(r)=\lim_{r \rightarrow 0} f(r)=0$, see \cite{gkt1}
or \cite{BT-SSM}. From Section \ref{enest} it follows that, in the case $\mu=-1$, 
$\bar u_3(\infty)=\bar u_3(0)=1$. In the case $\mu=1$ one needs the energy restriction
$E(u) < 4\pi$ to obtain that $\bar u_3(\infty)=\bar u_3(0)=1$, see \cite{BIKT2}. 
In both cases we obtain the following identity
\begin{equation} \label{basicpsi} \|\psi^\pm\|_{L^2}^2 = \| \bar
  \W^\pm \|_{L^2}^2 = \frac{E(u)}{\pi}.
\end{equation}

From \eqref{gcont} it follows that the following continuity property holds true
\begin{equation} \label{dpsi}
\| \psi^\pm - \tilde{\psi}^\pm \|_{L^2} \les \| u - \tilde u  \|_{\dot H^1}
\end{equation}

A direct computation yields the following system for $\psi^{\pm}$:
\[
\begin{split}
  ( i \partial_t + H^-_m) \psi^- & = \left( A_0 - 2 \frac{A_2+\mu m}{r^2}
    + \frac{A_2^2-m^2}{r^2} - \frac\mu{r} \Im{(\psi_2 \bar{\psi}_1)} \right) \psi^- \\
  (i \partial_t + H^+_m) \psi^+ & = \left( A_0 + 2 \frac{A_2+\mu m}{r^2} +
    \frac{A_2^2-m^2}{r^2} + \frac\mu{r} \Im{(\psi_2 \bar{\psi}_1)} \right)
  \psi^+
\end{split}
\]
where
\[
H^-_m = \Delta - \frac{(m+\mu)^2}{r^2}, \qquad H^+_m = \Delta - \frac{(m-\mu)^2}{r^2}.
\]
Here and whenever $\Delta$ acts on radial functions, it is known that
$\Delta = \partial_r^2 + \frac1r \partial_r$.  By replacing $\psi_1 =
\psi^\pm \mp i r^{-1} \psi_2$ and using $\mu A_2^2 + |\psi_2|^2=\mu m^2$, we obtain
the key evolution system we work with in this paper,
\begin{equation} \label{psieq} \left\{ \begin{array}{l} (i \partial_t
      + H^{-}_m) \psi^- = (A_0 - 2 \frac{A_2+\mu m}{r^2} -
      \frac{\mu}{r}\Im{(\psi_2 \bar{\psi}^-)}) \psi^- \cr (i \partial_t +
      H^+_m) \psi^+ = ( A_0 + 2 \frac{A_2+\mu m}{r^2} + \frac{\mu}{r}\Im{(\psi_2
        \bar{\psi}^+)}) \psi^+
    \end{array} \right.
\end{equation}
We will use this system in order to obtain estimates for
$\psi^\pm$. The old variables $\psi_1$ and $\frac{\psi_2}r$ are
recovered from
\begin{equation} \label{rela} \psi_1 = \frac{\psi^+ + \psi^-}2, \qquad
  \frac{\psi_2}r = \frac{\psi^+ - \psi^-}{2i}
\end{equation}
From the compatibility conditions \eqref{comp} we derive the formula
for $A_2$
\begin{equation} \label{A2} A_2(r) + \mu m= - \mu \int_0^r
  \frac{|\psi^+|^2-|\psi^-|^2}{4} s ds
\end{equation}

From \eqref{a0bis} $A_0$ is given by
\begin{equation} \label{A0} A_0 = - \frac{\mu}2 \Re(\overline{\psi}^+
  \psi^-) + \mu [r\partial_r]^{-1} \Re(\overline{\psi}^+ \psi^-)
\end{equation}

The compatibility condition \eqref{comp} reduces then to
\begin{equation} \label{compnew}
  \partial_r [ r (\psi^+ - \psi^-)] = - A_2 (\psi^+ + \psi^-)
\end{equation}

Next assume that $\psi^\pm \in L^2$ are given such that they satisfy the
compatibility conditions \eqref{compnew}.  We reconstruct $A_2,
\psi_2, \psi_1$ using the \eqref{rela} and \eqref{A2}.  From
\eqref{A2} and \eqref{compnew} it follows that \eqref{comp} hold true.
From \eqref{A2} it follows that $A_2 \in L^\infty$ and it is
continuous and has limits both at $0$ and $\infty$. From the
definition of $\psi_2$ we have $\frac{\psi_2}r \in L^2$ and from
\eqref{compnew} we derive $\partial_r \psi_2 \in L^2$, hence $\psi_2
\in \dHe$. From this and \eqref{A2} it follows that $\partial_r A_2
\in L^2$, while by invoking \eqref{rdrm} we obtain $\frac{A_2+\mu m}r \in
L^2$, therefore $A_2 + \mu m \in \dHe$.  In particular $A_2(\infty)=\lim_{r
  \rightarrow \infty} A_2(r)=-\mu m$ which implies that $\| \psi^+
\|_{L^2}=\| \psi^- \|_{L^2}$.

In fact one can keep track of a single variable, $\psi^-$ or $\psi^+$
since it contains all the information about the map, provided that the
choice of gauge \eqref{bcvw} was made. To be more precise,
\eqref{comp} gives the following
\begin{equation}
  \label{comp1}
  \partial_r A_2= \mu \Im{(\psi^- \bar{\psi}_2)} + \frac{\mu}r |\psi_2|^2, \qquad \partial_r
  \psi_2 = i A_2 \psi^- - \frac{1}r A_2 \psi_2
\end{equation}
We will show that given $ \psi^- \in L^2$, this system has a
unique solution $A_2+\mu m,\psi_2 \in \dHe$.  From this we can reconstruct
$\psi_1, \psi^+,A_0$. Finally, given $\psi^-$, $A_2$ and $\psi_2$, we
can return to the Schr\"odinger map $u$ via the system \eqref{return}
with the boundary condition at infinity given by
\eqref{bcvw}. Eventually we show that if $\psi^-$ satisfies its
corresponding equation from \eqref{psieq}, then the $u$ obtained is a
Schr\"odinger map. A similar procedure can completely reconstruct $u$
from $\psi^+$.

The reason to keep both variables $\psi^\pm$ (instead of just one) has
to do with the nonlinear analysis of the system \eqref{psieq}. The
reason we want to understand how to recover all information from only
one variable, say $\psi^-$, has to do with the elliptic part of the
profile decomposition in Proposition \ref{pcc}.

\subsection{Fix $\mu=-1$} The theory with $\mu =1$ was developed in
\cite{BIKT2}.  From this point on we fix $\mu=-1$ as the theory
becomes more specific to this case.  When comparing the results
obtained here and those in \cite{BIKT2} the reader may notice a few
differences. First, one sees that $\psi^\pm$ come with operators $H_{m
  \mp \mu}$ and all the consequences associated, see for instance the
regularity below. This is a consequence of the way we chose the limits
$\lim_{r \rightarrow 0} \bar u_3 = \lim_{r \rightarrow \infty} \bar
u_3= -\mu$.  Second, the analytic theory of the system \eqref{comp1}
with $\mu=-1$, see Proposition \ref{a2p2c}, is somehow different then
its counterpart for $\mu=1$. The Cauchy theory in Section \ref{CTH}
and the Concentration compactness argument in Section \ref{concomp}
are very similar. Finally, the arguments in Section \ref{MOM} are
again specific to the case $\mu =-1$, as in particular no restriction
on the size of the energy/mass is needed to rule out the possibility
of blow-up.

\subsection{Regularity of the gauge elements} \label{sreg} In this
section we clarify the regularity of the gauge elements. Our main claim is
the following
\begin{prop} \label{greg} If $u \in \dot H^3$ then $R_{m \pm 1} \psi^\pm \in
  H^2$ and
  \begin{equation} \label{rtr} \| u \|_{\dot H^1 \cap \dot H^3}
    \approx \| R_{m+1} \psi^+ \|_{H^2} + \| R_{m-1} \psi^- \|_{H^2}
  \end{equation}
\end{prop}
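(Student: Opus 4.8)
\emph{Plan.} The claim has two layers. The bottom layer is already essentially available: by \eqref{Freg} (with $s=0,2$) one has $R_{m\pm1}\psi^\pm\in H^2$ iff $\psi^\pm\in L^2$ and $H_{m\pm1}\psi^\pm\in L^2$, since $\Delta R_k f=R_k H_k f$. The $L^2$ membership and its control by $\|u\|_{\dot H^1}$ is exactly \eqref{basicpsi}, so the entire content of the proposition is the equivalence $H_{m\pm1}\psi^\pm\in L^2\Longleftrightarrow u\in\dot H^3$, together with quantitative bounds. The organizing principle I would use is that $R_{m\pm1}\psi^\pm$ is nothing but $\nabla u$ written in the orthonormal frame $(v,w)$ with the equivariant phase stripped: recall from the discussion preceding \eqref{basicpsi} that $\psi^\pm$ represents $\W^\pm=\partial_r u\pm\tfrac1r u\times\partial_\theta u$ in $(v,w)$, and the holomorphic/antiholomorphic shift in the angular variable produces precisely the index $m\pm1$ attached to $H_{m\pm1}$. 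Thus, morally, differentiating $R_{m\pm1}\psi^\pm$ twice produces $\nabla u$ differentiated twice (the $\dot H^3$ part of $u$) plus corrections coming from differentiating the frame, and those corrections are nonlinear terms controlled at a lower regularity.

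\emph{Forward direction} ($u\in\dot H^3\Rightarrow R_{m\pm1}\psi^\pm\in H^2$). First I would record the pointwise and $\dHe$ bounds from Section \ref{enest}: $A_2\in L^\infty$, $\bar u_3\in L^\infty$, and $A_2+\mu m,\psi_2\in\dHe$ after reconstruction, together with the bound for $A_0$ coming from \eqref{A0} and \eqref{rdrm}. Next, using the Coulomb gauge $A_1=0$, I would differentiate the frame system \eqref{return}: one computes $\partial_r^2 u=\Re(\partial_r\psi_1)\,v+\Im(\partial_r\psi_1)\,w-\mu|\psi_1|^2u$ and, from the $k=2$ equations, $\partial_\theta^2 u=-\mu|\psi_2|^2u+A_2(\Re\psi_2\,w-\Im\psi_2\,v)$, and similarly for $\partial_r\partial_\theta u$. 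Differentiating once more reaches $\partial^3 u$ in terms of $\partial_r^2\psi_1$ and the weighted angular second derivatives, plus products of $\psi$'s, $A_2$, and powers of $r^{-1}$. Reorganizing these with the compatibility relations \eqref{comp}, \eqref{rela}, \eqref{A2}, \eqref{A0} yields an identity of the schematic form $H_{m\pm1}\psi^\pm=(\text{tangential third-order data of }u)+N^\pm$, where $N^\pm$ collects the same nonlinear terms that appear on the right of \eqref{psieq}, namely $A_0\psi^\pm$, $\tfrac{A_2+\mu m}{r^2}\psi^\pm$ and $\tfrac{\mu}{r}\Im(\psi_2\bar\psi^\pm)\psi^\pm$. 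Each term of $N^\pm$ is placed in $L^2$ using the $L^\infty$ bounds above, the Hardy-type estimates \eqref{rdrm}, and the Sobolev embeddings of Lemma \ref{LBE}(vii)--(viii), interpolating intermediate factors between $\dot H^1$ and $\dot H^3$ by Gagliardo--Nirenberg. Finally the weighted radial/angular second-derivative combinations are converted to the clean operator $H_{m\pm1}\psi^\pm$ by invoking the part of Lemma \ref{LBE}(i)--(vi) matching the centrifugal index $m\pm1$.

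\emph{Reverse direction and main obstacle.} Conversely, given $R_{m\pm1}\psi^\pm\in H^2$, I would run the reconstruction described after \eqref{comp1}: solve \eqref{comp1} for $A_2+\mu m,\psi_2\in\dHe$, recover $\psi_1,\psi^+,A_0$ via \eqref{rela}, \eqref{A2}, \eqref{A0}, and then integrate the frame ODE \eqref{return}--\eqref{cgeq-m} with the boundary condition \eqref{bcvw} to obtain $u$; reversing the estimates of the forward step propagates $H^2$ control of $R_{m\pm1}\psi^\pm$ into $\dot H^1\cap\dot H^3$ control of $u$. The step I expect to be most delicate is the weighted endpoint bookkeeping near $r=0$: matching each singular contribution ($r^{-1}\partial_r\psi$, $r^{-2}\psi$, and the $r^{-3}$ pieces) to the correct case of Lemma \ref{LBE}, which depends sharply on whether $m\pm1$ lands in $\{0,1,2\}$ or is $\geq 3$. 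Since $m\neq0$ is the only constraint, the borderline indices genuinely occur (e.g. $m=1$ forces index $0$ for $\psi^-$, and $m=\mp1$ the same for $\psi^\pm$), so the singular cases ii)--v) of Lemma \ref{LBE} must be deployed exactly; simultaneously, controlling the quadratic and cubic terms of $N^\pm$ in $L^2$ relies on the $L^\infty$ bounds and Hardy inequalities interacting precisely with these weights. The remaining manipulations are routine and I would leave them to computation.
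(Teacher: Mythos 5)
Your overall strategy coincides with the paper's: reduce via \eqref{Freg} to showing $\psi^\pm,\,H_{m\pm1}\psi^\pm\in L^2$, exploit the Coulomb gauge so that $\partial_r(v+iw)$ contributes only multiples of $u$ and radial derivatives pass directly onto $\W^\pm$, identify the main term of $H_{m\pm1}\psi^\pm$ with third-order derivatives of $u$ written componentwise, and control the remaining corrections by Lemma \ref{LBE} together with the Hardy and Sobolev bounds, with the case analysis on the centrifugal index $m\pm1\in\{0,1,2\}$ versus $\geq 3$ that you correctly single out as the delicate point. Your frame computations ($\partial_r^2u$, $\partial_\theta^2 u$) are correct, and your reverse direction via the reconstruction of Lemma \ref{a2p2c} and Proposition \ref{recprop} is an acceptable substitute for the paper's terser ``reverse the forward computation.''

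The one concrete error is the claimed schematic identity $H_{m\pm1}\psi^\pm=(\text{third-order data of }u)+N^\pm$ with $N^\pm$ ``the same nonlinear terms as in \eqref{psieq},'' in particular containing $A_0\psi^\pm$. This cannot be the identity you end up estimating: $A_0=\partial_t v\cdot w$ is the \emph{temporal} connection coefficient, given by the nonlocal formula \eqref{A0} involving $[r\partial_r]^{-1}$, and it enters only through the time evolution (via \eqref{psizero} and \eqref{curb} with $l=0$). A fixed-time, purely spatial computation of $H_{m\pm1}\psi^\pm$ from $\nabla^3u$ and the frame is local and produces no such term. What actually appears (see the Appendix) is, besides $(\partial_r+\tfrac1r)F^\pm$ with $F^\pm$ the genuine third-order data, the corrections $\tfrac{u_3-1}{r}\partial_r\psi^\pm$, $\tfrac{\partial_r A_2}{r}\psi^\pm$ and $\tfrac{(m\pm1)^2}{r^2}\psi^\pm$; since $A_2=mu_3$ gives $u_3-1=(A_2-m)/m$, your second and third listed terms are essentially of the right shape, and all of these are placed in $L^2$ by exactly the tools you invoke. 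So the slip is reparable, but as written the identity is not the one that holds; and if you instead tried to produce the $A_0$ term by appealing to the evolution equation \eqref{psieq}, you would need $\partial_t\psi^\pm\in L^2$, which is neither available at this stage nor appropriate for what is a fixed-time elliptic statement.
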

The proof of this result will be provided in the Appendix.

Therefore, in the context of $u \in \dot H^1 \cap \dot H^3$, we have
that $R_{m \pm 1} \psi^\pm \in H^2 \subset L^\infty$. The $H^2$ regularity
cannot be extended to (two-dimensional extensions of) $\psi_1$ and
$\frac{\psi_2}r$ since the $\psi^+$ and $\psi^-$ require different
phases for regularity. However, all the Sobolev embeddings are
inherited by $\psi_1$ and $\frac{\psi_2}r$, in particular $\psi_1,
\frac{\psi_2}r \subset L^\infty$.  Since $A_2=u_3$ it follows that
$A_2 \in \dot H^1 \cap \dot H^3$ and $\partial_t A_2 \in H^1$.
Finally by differentiating with respect to $t$ the system
\eqref{cgeq}, one can show that $\partial_t \bar v \in H^1$, hence
$A_0 \in H^1$ which in turn gives $\partial_r A_0 \in L^2$.  With
these in mind, all the compatibility conditions in the previous two
subsections are at least at the level of $L^2$.

\subsection{Recovering the map from $\psi^-$.} In this section we
address the issue of re-constructing the Schr\"odinger map $u$
together with its gauge elements from only one of its reduced
variable, say $\psi^-$.  Reconstructing $\psi_2,A_2$ such that
$\psi_2,A_2-m \in \dHe$ is a unique process; however, the
reconstruction of the actual map with its frame, i.e. of $(u,v,w)$ is
unique provided one prescribes conditions at $\infty$.  The map $u$
satisfies $u(\infty)=\vec{k}$, while the gauge is subjected to the
choice \eqref{bcvw}.

The main result of this section is the following

\begin{prop} \label{recprop} Given $\psi^- \in L^2$, there is a unique map $u: \R^2
  \rightarrow \S^2$ with the property that $\psi^-$ is the
  representation of $\mathcal{W}^-$ relative to a Coulomb gauge
  satisfying \eqref{bcvw}.  This also satisfies $E(u)=\pi \| \psi^-
  \|_{L^2}^2$.
  
If $\tilde \psi^- \in L^2$ and $\tilde u$ is the corresponding map as above, 
then the following holds true
\begin{equation} \label{udif}
E(u-\tilde u) \les \| \psi^- - \tilde \psi^- \|_{L^2}^2
\end{equation}

\end{prop}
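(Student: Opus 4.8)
The plan is to reconstruct $u$ in two stages and then establish the two quantitative claims, the energy identity and the Lipschitz bound \eqref{udif}. In the first stage I recover the gauge scalars: given $\psi^-\in L^2$ I feed it into the ODE system \eqref{comp1} and invoke Proposition \ref{a2p2c} to produce the unique pair with $A_2+\mu m,\psi_2\in\dHe$, whence $\psi_1=\psi^-+ir^{-1}\psi_2\in L^2$ and then $\psi^+,A_0$ through \eqref{rela} and \eqref{A0}. Since $A_2+\mu m\in\dHe$ vanishes at $r=\infty$, formula \eqref{A2} forces $\|\psi^+\|_{L^2}=\|\psi^-\|_{L^2}$. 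It is convenient to record already that \eqref{a2u3} and \eqref{cons} make the radial data of the map explicit, $\bar u_3=A_2/m$ and $\bar u_1^2+\bar u_2^2=|\psi_2|^2/m^2$, so these are Lipschitz functions of $\psi^-$ by Proposition \ref{a2p2c}.

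In the second stage I recover the frame and the map. Reading \eqref{return} with $k=1$ and $A_1=0$, equivalently the matrix equation \eqref{cgeq-m}, as a linear ODE for $\calO=(\bar v,\bar w,\bar u)$ whose coefficients depend only on $\psi_1$, I solve it on $(0,\infty)$; since $\psi_1\in L^2(rdr)\subset L^1_{loc}((0,\infty))$ this is a standard Carath\'eodory problem, and for $\psi^-$ regular enough that $\psi_1\in L^1(dr)$ near infinity the terminal conditions \eqref{bcvw} and $\bar u(\infty)=\vec k$ are attained, the general case being recovered by the density argument below. A direct check shows the Minkowski inner products among $\bar u,\bar v,\bar w$ are constant in $r$, so the orthonormality relations propagate from $r=\infty$ and $\bar u$ genuinely takes values in $\H^2$. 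Finiteness of the energy and the boundary behaviour follow from the explicit data: $\bar u_3-1=(A_2+\mu m)/m\in\dHe$, while $|r^{-1}\bar u_{1,2}|\le|\psi_2|/(|m|r)\in L^2$ and $|\partial_r\bar u_{1,2}|\le|\psi_1|\in L^2$ give $\bar u_1,\bar u_2\in\dHe$, so $u(0)=u(\infty)=\vec k$. That $\psi^-$ is the representation of $\W^-$ in this Coulomb gauge holds by construction. The energy identity is then the computation $|\partial_r\bar u|^2_\mu=|\psi_1|^2$ and $r^{-2}(\bar u_1^2+\bar u_2^2)=|r^{-1}\psi_2|^2$, so $E(u)=\tfrac\pi2(\|\psi^+\|_{L^2}^2+\|\psi^-\|_{L^2}^2)=\pi\|\psi^-\|_{L^2}^2$, which is \eqref{basicpsi}. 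Uniqueness is immediate: any competitor shares $\psi^-$, hence shares $A_2,\psi_2,\psi_1$ by Proposition \ref{a2p2c}, hence solves the same frame ODE with the same terminal data.

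The last and hardest step is \eqref{udif}. Proposition \ref{a2p2c} delivers the clean part: with $\delta$ denoting differences, $\delta\bar u_3=\delta A_2/m$, and the modulus $\rho$ of $\bar u_1+i\bar u_2$, which equals $|\psi_2|/|m|$, satisfies $\|\partial_r\delta\rho\|_{L^2}+\|r^{-1}\delta\rho\|_{L^2}\les\|\delta\psi^-\|_{L^2}$, so every contribution to $E(u-\tilde u)$ except that of the phase $\phi$ of $\bar u_1+i\bar u_2$ is already bounded by $\|\delta\psi^-\|_{L^2}^2$. For the remaining, genuinely frame-dependent, term $\tilde\rho(e^{i\phi}-e^{i\tilde\phi})$ I would combine \eqref{gcont}, which bounds the frame difference in $L^\infty$ by $\|u-\tilde u\|_{\dot H^1}$, with a Hardy-type estimate from \eqref{rdrm} applied to the phase equation extracted from \eqref{return}.

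I expect this closure to be the main obstacle, and it is structural. The frame difference, equivalently $\phi-\tilde\phi$, is controlled only by $\|u-\tilde u\|_{\dot H^1}\sim E(u-\tilde u)^{1/2}$ and not directly by $\|\delta\psi^-\|_{L^2}$, because $\psi_1\in L^2(rdr)$ is not integrable in $dr$ near $0$ or $\infty$, so integrating the frame ODE from infinity diverges and one cannot bypass \eqref{gcont}. Substituting \eqref{gcont} yields a schematic inequality $E(u-\tilde u)^{1/2}\les\|\delta\psi^-\|_{L^2}+E^{1/2}E(u-\tilde u)^{1/2}$, which closes at once only for small energy. For arbitrary energy I would either prove \eqref{udif} first for regular data, where $u\in\dot H^3$, the frame converges classically and all quantities decay, and then promote it to all of $L^2$ by density, using the estimate itself to make the approximating maps Cauchy in $\dot H^1$ and thereby define $u$, with uniqueness, for general $\psi^-\in L^2$; or run a subdivision argument, partitioning $(0,\infty)$ into finitely many intervals of small energy and absorbing the offending term on each. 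In either route the crux is decoupling the phase continuity from the a priori unknown $E(u-\tilde u)$.
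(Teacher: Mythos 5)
You have set the problem up correctly and, to your credit, you have put your finger on exactly the right obstacle: controlling the frame (equivalently the phase of $\bar u_1+i\bar u_2$) by $\|\psi^--\tilde\psi^-\|_{L^2}$ rather than by $\|u-\tilde u\|_{\dot H^1}$. But your proof does not close there, and the two exit routes you sketch are left unexecuted; moreover your diagnosis that ``one cannot bypass \eqref{gcont}'' is wrong, and bypassing it is precisely the content of the paper's argument. The missing idea is structural: although the coefficient matrix $M$ of the frame ODE \eqref{return} is built from $\psi_1\in L^2(rdr)$ and hence is not in $L^1(dr)$ near $0$ or $\infty$, the compatibility relation \eqref{comp} ($\partial_r\psi_2=iA_2\psi_1$) lets you write $\psi_1$ as a multiple of $\partial_r\psi_2$ plus $(A_2-m)\psi_1$, and the latter \emph{is} in $L^1(dr)$ by Cauchy--Schwarz since $r^{-1}(A_2-m)\in L^2(rdr)$ and $\psi_1\in L^2(rdr)$. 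This gives $M=N+\partial_r K$ with $\|N\|_{L^1(dr)}+\|K\|_{\dHe}\les\|\psi^-\|_{L^2}$, localizable to $[r,\infty)$ by \eqref{loc2est}. One then solves the terminal-value problem at $r=\infty$ by a Picard iteration in $C([r,\infty])$ in which the $\partial_rK$ contribution is integrated by parts at each step; the $i$-th iterate is bounded by $\|\psi^-\|_{L^2([r,\infty))}^i$, so the scheme contracts on intervals of small $\psi^-$-mass and is chained over finitely many such intervals. This simultaneously (a) constructs the frame for arbitrary $\psi^-\in L^2$ with data at infinity, with no density argument and no regularity assumption, and (b) since the iteration is term-by-term Lipschitz in $(N,K)$ and hence in $\psi^-$, yields $\|X-\tilde X\|_{L^\infty}+\|\partial_r(X-\tilde X)\|_{L^2}\les\|\psi^--\tilde\psi^-\|_{L^2}$ directly. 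That bound is exactly what you were missing: it replaces \eqref{gcont} in your schematic inequality and removes the self-referential $E(u-\tilde u)^{1/2}$ term, after which \eqref{udif} follows by writing $u_1=v_2w_3-v_3w_2$ and using $\psi_2=-m(\bar w_3-i\bar v_3)$ together with \eqref{1estdif} to handle the $r^{-1}$ weights.

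Two smaller points. First, your ``clean part'' of \eqref{udif} is not as clean as claimed: the difference of the moduli $|\psi_2|$ and $|\tilde\psi_2|$ is controlled in $L^2(r^{-1}dr)$ but its $r$-derivative again drags in the phases, so the modulus/phase splitting does not actually decouple the difficulty --- better to estimate the Cartesian components $u_1,u_2$ directly as products of frame entries, as above. Second, your density route is circular exactly where you suspect it is (you need the Lipschitz estimate to make the approximants Cauchy, but the estimate is what you are trying to prove), and your subdivision route, while close in spirit to the paper's interval-by-interval iteration, cannot be run on the raw ODE with coefficient $\psi_1$: without the $N+\partial_rK$ decomposition the local-in-$r$ solvability from $r=\infty$ already fails, so the decomposition is needed even before any smallness or absorption argument.
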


Here $\psi^+$ can be reconstructed from $\psi^-$.  Moreover the
equations \eqref{comp1} which we use for reconstruction force the
compatibility condition \eqref{compnew} between $\psi^\pm$.  The
result remains true if we start from $\psi^+$ just that we would start
the reconstruction (described below) from the analogue of the
\eqref{comp1} written in terms of  $\psi^+$. The two problems are in effect
equivalent via an inversion. The uniqueness of the reconstruction
guarantees that starting from either $\psi^+$ or $\psi^-$ (which are
assumed to be compatible) gives the same $u$.
 
The proof consists of several steps. The first one deals with
recovering the two gauge elements $\psi_2,A_2$ from $\psi^-$ by using
the system \eqref{comp1}.

\begin{lema} \label{a2p2c} Given $\psi^- \in L^2$, the system \eqref{comp1} has a unique
  solution $(A_2, \psi_2)$ satisfying $\psi_2, A_2 -m \in \dHe$. This
  solution satisfies
  \begin{equation} \label{2est} \| \psi_2 \|_{\dHe} + \| A_2-m
    \|_{\dHe} + \| \frac{A_2-m}r \|_{L^1(dr)} \les \| \psi^- \|_{L^2}(m+\| \psi^- \|_{L^2}^2)
  \end{equation}
  In addition we have the following properties:
  
  i) given $\epsilon > 0$, and $R$  such that $\| \psi^-
  \|_{L^2(\R \setminus [R^{-1},R])} \leq \epsilon$, then the following holds
  true
  \begin{equation} \label{loc2est} \| \psi_2 \|_{\dHe(\R \setminus
      [R^{-1},\epsilon^{-1} R])} + \| A_2 -m \|_{\dHe(\R \setminus
      [R^{-1},\epsilon^{-1} R])} \les \epsilon \| \psi^- \|_{L^2}
  \end{equation}
  ii) if $(\tilde A_2, \tilde \psi_2)$ is another solution (as above)
  to \eqref{comp1} with $\tilde \psi^-$, then
  \begin{equation} \label{1estdif} \| \psi_2 - \tilde \psi_2 \|_{\dHe}
    + \| A_2 - \tilde A_2 \|_{\dHe} \les \| \psi^- - \tilde \psi^-
    \|_{L^2}
  \end{equation}
  iii) if $(\tilde A_2, \tilde \psi_2)$ satisfy $\tilde \psi_2, \tilde
  A_2-m \in \dHe $ and solve
  \begin{equation} \label{sys2ap}
    \begin{split}
      \partial_r \tilde \psi_2 = & \ i \tilde A_2 \tilde \psi^- -
      \frac{1}r \tilde A_2 \tilde \psi_2 + E_1 \\
      \partial_r \tilde A_2= & - \ \Im{(\tilde \psi^- \bar{\tilde
          \psi}_2)}-\frac{1}r (\tilde A_2^2-m^2) + E_2
    \end{split}
  \end{equation}
  where $\| |E_1|+ |E_2|\|_{L^1(dr)+L^2} \les \epsilon$ then
  \begin{equation} \label{2estdif} \| \psi_2 - \tilde \psi_2 \|_{\dHe}
    + \| A_2 - \tilde A_2 \|_{\dHe} \les C(\| \psi^{-} \|_{L^2},\| \tilde \psi^{-} \|_{L^2}) (\| \psi^- - \tilde \psi^-
    \|_{L^2}+ \epsilon)
  \end{equation}
  
  iv) if $\psi^- \in L^p$ with $1 \leq p < \infty$ then $\psi^+,
  \frac{\psi_2}r,\frac{A_2-m}r \in L^p$ and
  \begin{equation} \label{strfix} \| \psi^+ \|_{L^p} + \|
    \frac{\psi_2}r \|_{L^p} + \| \frac{A_2-m}r \|_{L^p} 
    \les C(\| \psi^{-} \|_{L^2})\| \psi^-\|_{L^p}
  \end{equation}
  
  v) if $R_{m-1} \psi^- \in H^s$ then $R_{m+1} \psi^+ \in H^s$ for any $s
  \in \{ 1,2,3 \}$, and 
 \begin{equation} \label{sobpsi}
\| R_{m-1} \psi^- \|_{ H^s} \approx \| R_{m+1} \psi^+ \|_{ H^s}
 \end{equation}
with implicit constants depending on $\| \psi^-\|_{L^2}$.
\end{lema}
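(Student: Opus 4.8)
The plan is to read the compatibility system \eqref{comp1} as a coupled first order ODE system in $r$ for the pair $(A_2,\psi_2)$, driven by the given datum $\psi^-\in L^2$, with the boundary data at $r=\infty$ dictated by \eqref{bcvw}, namely $A_2(\infty)=m$ and $\psi_2(\infty)=0$ (take $m>0$; the case $m<0$ is symmetric). The structural fact that makes the $\mu=-1$ analysis work is the conservation law \eqref{cons}, which here reads $A_2^2-|\psi_2|^2=m^2$. First I would check that this quantity is preserved along \eqref{comp1} (its $r$-derivative cancels identically), so that any solution with the prescribed limits obeys $A_2=\sqrt{m^2+|\psi_2|^2}\ge m>0$ pointwise. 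This yields two things for free: $A_2$ never vanishes and has a fixed sign, and $A_2-m=|\psi_2|^2/(A_2+m)$ is controlled \emph{quadratically} by $\psi_2$, with $\frac{A_2-m}{r}\les \frac1m\frac{|\psi_2|^2}{r}$. This is exactly the mechanism absent in the $\mu=1$ case, and it is what lets one close the estimates with no smallness restriction.

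For the basic estimate \eqref{2est} I would rewrite the $\psi_2$ equation as $\partial_r\psi_2+\frac{A_2}{r}\psi_2=iA_2\psi^-$ and split $A_2=m+(A_2-m)$. Freezing the coefficient at $m$ gives $\partial_r(r^m\psi_2)=imr^m\psi^-+(\text{cubic})$, so that $\frac{\psi_2}{r}=im\,r^{-m-1}[r^{-m}\bar\partial_r]^{-1}\psi^-+\dots$; the operator bound \eqref{rdrm} (sharpened by a weighted Hardy inequality to gain a factor $m^{-1}$) then produces $\|\frac{\psi_2}{r}\|_{L^2}+\frac1m\|\partial_r\psi_2\|_{L^2}\les\|\psi^-\|_{L^2}$ for the linear part, the nonlinear corrections contributing the higher order term $\|\psi^-\|_{L^2}^3$, while the bounds on $A_2-m$ in $\dHe$ and on $\frac{A_2-m}{r}$ in $L^1(dr)$ follow from the conservation law. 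Existence and uniqueness I would obtain by a contraction mapping for $\psi_2$ in $\dHe$ (recovering $A_2=\sqrt{m^2+|\psi_2|^2}$ from the conservation law and selecting the regular $r^m$ branch at the origin, the only branch compatible with $\dHe$): the linear bound gives boundedness and the quadratic structure gives the contraction on a tail $r>R_0$ with $\|\psi^-\|_{L^2(r>R_0)}$ small, the conservation-law a priori bound ruling out blow-up so that the solution continues to $r=0$. This simultaneous treatment of existence and the global estimate \eqref{2est} is the crux: the $1/r$ coefficients are singular at both endpoints and the system is genuinely nonlinear, so linear ODE theory does not apply directly, and it is the conservation law together with the $m^{-1}$ Hardy gain that resolve both the sign and the correct power of $m$.

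The remaining parts follow as perturbations of this core. The localization \eqref{loc2est} holds because $r^{-m-1}[r^{-m}\bar\partial_r]^{-1}$ localizes to dyadic neighborhoods (the $r^m$ weight suppresses far contributions), so running the basic estimate on $\R\setminus[R^{-1},\epsilon^{-1}R]$ only picks up the small tail $\epsilon\|\psi^-\|_{L^2}$. Continuity \eqref{1estdif} and stability \eqref{2estdif} come from subtracting two systems: the differences $(\delta A_2,\delta\psi_2)$ satisfy the same linear equation driven by $\delta\psi^-$ (plus the errors $E_1,E_2$ in the stability case), and the same linear estimate, with the a priori bounds on both solutions, closes the argument with constants depending only on the $L^2$ norms. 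The $L^p$ bounds \eqref{strfix} use $\psi^+=\psi^-+2i\frac{\psi_2}{r}$ from \eqref{rela}, the $L^p$ form of \eqref{rdrm}, and $\frac{A_2-m}{r}=\frac{1}{A_2+m}\bar\psi_2\frac{\psi_2}{r}$ together with $\psi_2\in L^\infty$.

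The Sobolev transfer \eqref{sobpsi} is the most delicate of the remaining items. Here I would differentiate the system \eqref{comp1} once, twice, and three times, express the derivatives of $\psi^+$ (through $\psi^+=\psi^-+2i\frac{\psi_2}{r}$) in terms of those of $\psi^-$ and $\psi_2$, and close the resulting estimates using Lemma \ref{LBE} and the Hankel characterizations \eqref{Freg}, \eqref{Frg1}, keeping $\psi^\pm$ at the angular frequencies $m\mp\mu=m\pm1$ forced by the operators $H^{\pm}_m$. The bookkeeping of these frequencies, and the fact that the two-dimensional extensions $R_{m-1}\psi^-$ and $R_{m+1}\psi^+$ live at different angular indices, is where the argument requires the most care, but the mechanism is the same weighted-antiderivative representation, now applied to the differentiated equations.
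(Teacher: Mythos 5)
Your identification of the structural inputs matches the paper: the conservation law $A_2^2-|\psi_2|^2=m^2$ (so $A_2\ge m>0$ and $A_2-m=|\psi_2|^2/(A_2+m)$ is quadratically controlled), the weighted antiderivative representation via $[r^{-m}\bar\partial_r]^{-1}$, and the use of the compatibility relation \eqref{compnew} to transfer regularity in part v). The gap is in the core existence/uniqueness step: you anchor the problem at $r=\infty$ and propose to run the contraction on a tail $r>R_0$ and then continue inward to $r=0$. This cannot select the unique $\dHe$ solution. The frozen-coefficient equation $(\partial_r+\frac{m}{r})\psi_2=\cdots$ has the homogeneous solution $r^{-m}$, which decays at infinity and belongs to $\dHe$ near infinity for every $m\ge 1$; it is excluded only by the behavior at the origin. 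Hence the data $\psi_2(\infty)=0$, $A_2(\infty)=m$ leave a free complex parameter, and a fixed point posed near infinity is either non-unique or, if you insist on the representation $r^{-m}[r^{-m}\bar\partial_r]^{-1}(\cdots)$ (which integrates from $0$), circular, since for $r>R_0$ the integral $\int_0^r$ involves the unknown on $(0,R_0)$; the alternative kernel $\int_r^\infty \psi^- s^m\,ds$ need not even converge for $\psi^-\in L^2(rdr)$ when $m\ge1$. The paper therefore runs the contraction on $(0,R^{-1})$, where the smallness of $\|\psi^-\|_{L^2(0,R^{-1})}$ closes the fixed point and the integral operator automatically selects the regular branch, and then continues outward. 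Moreover, the a priori bound that makes the outward continuation global, $\|\psi_2/r\|_{L^2(0,r]}\le\|\psi^-\|_{L^2(0,r]}$ (which also produces the $m$-dependence in \eqref{2est}), comes from integrating the $A_2$ equation from $0$, where the term $-|\psi_2|^2/s$ has a favorable sign against $A_2\ge m$; integrated from infinity that term has the opposite sign and gives no control as $r\to 0$. So the direction is not a matter of taste: your scheme as written would generically continue into the singular $r^{-m}$ branch at the origin.

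A smaller point: for \eqref{1estdif}--\eqref{2estdif} you assert that ``the same linear estimate closes the argument,'' but the coefficient matrix of the linearized difference system (built from $\psi^-$, $\psi_2/r$, $(A_2-m)/r$) is not small, so a purely perturbative treatment fails. The paper handles this with a bootstrap assumption and a splitting $B=B_1+B_2$, removing $B_1\in L^1(dr)$ by conjugation with $e^{\int B_1}$ and treating $B_2\ll 1/r$ perturbatively. The rest of your outline (localization via the $r^{2m}$ weight, the $L^p$ bounds from the pointwise bound on $\psi_2/(A_2+m)$, and the frequency bookkeeping $R_{m\mp1}\psi^{\mp}$ in part v)) is consistent with the paper's argument.
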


The reason for having the second type of statement in \eqref{2estdif}
is of technical nature and will be apparent in Section
\ref{concomp}. The equation for $\tilde A_2$ in \eqref{sys2ap} is more
convenient in that form when taking differences. For the original
system \eqref{comp1} it does not matter how one writes the equation
for $A_2$ thanks to the conservation law $A_2^2-|\psi_2|^2= m^2$;
however in the case of \eqref{sys2ap} this conservation law does not
hold true, hence we write the system in the more convenient form
\eqref{sys2ap}.

\begin{proof}
  Our strategy is to solve the ode system \eqref{comp1} from zero.
  Since $\psi_2, A_2-m \in \dHe$, it follows that $\lim_{r
    \rightarrow 0} \psi_2=0, \lim_{r \rightarrow 0}
  A_2=m$. These two conditions play the role of boundary conditions
  at zero. Since $\partial_r (A_2^2-|\psi_2|^2) = 0$, it follows
  from the conditions at $\infty$ that $A_2^2-|\psi_2|^2=m^2$ holds on
  all of $\R_+$.  
    
  To prove existence, we begin by solving the system in a neighborhood
  $(0,R^{-1})$ of the origin. By choosing $R$ large enough we can
  assume without any restriction in generality that
  \begin{equation}\label{psismall}
    \| \psi^-\|_{L^2(0,R^{-1})}  \leq \epsilon
  \end{equation}
  and seek $(\psi_2, A_2)$ with the property that
  \begin{equation}\label{psi2small}
    \|\psi_2\|_{\dot H^1_e(0,R^{-1})} \lesssim \epsilon
  \end{equation}
  Since $\lim_{r \rightarrow \infty} A_2=m$, $A_2^2=m^2+|\psi_2|^2 > 0$  and $A_2$ is continuous, 
  it follows that $A_2= \sqrt{m^2+|\psi_2|^2}$.
  We substitute this in the $\psi_2$ equation and
  discard the dependent $A_2$ equation. We rewrite the $\psi_2$
  equation as
  \[
  (\partial_r + \frac{m}{r}) \psi_2 = i m \psi^- + i (A_2-m) \psi^- 
  - \frac{(A_2-m)\psi_2}{r}
  \]
  or equivalently
  \[
  r^{-m} \partial_r r^m \psi_2 = i m \psi^-+ i(A_2-m) \psi^- -
  \frac{(A_2-m)\psi_2}{r}
  \]
  and further
  \[
  \psi_2 = im r^{-m} [r^{-m} \partial_r]^{-1} \psi^-+ r^{-m} [r^{-m} \partial_r]^{-1}(i
  (A_2-m) \psi^- - \frac{(A_2-m)\psi_2}{r})
  \]
  We know from \eqref{rdrm} that $r^{-m-1}[r^{-m} \bar \partial_r]^{-1}$ maps $L^2$ to
  $L^2$, which easily implies that
  \[
  r^{-m} [r^{-m} \bar \partial_r]^{-1}: L^2 \to \dot H^1_e
  \]
  Hence in order to obtain $\psi_2$ via the contraction principle it
  suffices to show that for $\psi$ as in \eqref{psismall} and $\psi_2$
  as in \eqref{psi2small} the map
  \[
  \psi_2 \to i (A_2-m) \psi^- - \frac{(A_2-m)\psi_2}{r}
  \]
  is Lipschitz from $\dot H^1_e \to L^2$ with a small ($O(\epsilon)$
  in this case) Lipschitz constant. But this is straightforward due to
  the embedding $\dot H^1_e \subset L^\infty$.  Thus the existence of
  $\psi_2$ in $(0,R^{-1}]$ follows, and the corresponding $A_2$ is
  recovered via $A_2(r)=\sqrt{m^2+|\psi_2(r)|^2}$.  The same argument
  also gives Lipschitz dependence of $\psi_2$ on $\psi^-$ in
  $(0,R^{-1}]$.

  The solution obtained above on ($0,R^{-1}]$ can be extended locally
  via standard arguments since $L^2(rdr) \subset L^{1}_{loc}(dr)$.
  This extension is global provided we have an a-priori estimate which
  guarantees that $A_2$ and $\psi_2$ stay in a bounded set.  Indeed,
  integrating the equation of $A_2$ gives
  \[
  A_2(r)-m \leq \| \psi^- \|_{L^2(0,r]} \| \frac{\psi_2}r \|_{L^2(0,r]}  - \| \frac{\psi_2}r \|_{L^2(0,r]}^2 
  = \| \frac{\psi_2}r \|_{L^2(0,r)} (\| \psi^- \|_{L^2(0,r]}  - \| \frac{\psi_2}r \|_{L^2(0,r]} )
  \]
  and since $A_2(r) \geq m$ it follows that $ \| \frac{\psi_2}r
  \|_{L^2(0,r)} \leq \| \psi^- \|_{L^2(0,r]} $ for any $r \geq 0$, in
  particular we obtain $\| \frac{\psi_2}r \|_{L^2} \leq \| \psi^-
  \|_{L^2} $. From above estimate we also obtain
  \begin{equation} \label{A2pest}
  \|A_2\|_{L^\infty} \leq m+\| \psi^- \|_{L^2}^2. 
  \end{equation}
This in turn guarantees that the solution $(A_2,\psi_2)$ extends globally up to $r = \infty$.  
Also, using these estimates in \eqref{comp1} gives the \eqref{2est}.

   For  proving \eqref{loc2est} we use an energy type argument. Denoting
  \[
  F = \frac{\psi_2}{A_2+m}
  \]
  its derivative satisfies
  \[
  \left|\frac{d}{dr} |F|^2 + \frac{2m}r |F|^2\right| \lesssim
  |\psi^-| | F|
  \]
  This further leads to
  \[
  \left|\frac{d}{dr} (r^{2m} |F|) \right| \lesssim r^{2m}|\psi^-|
  \]
  Integrating from infinity we obtain
  \[
  |F| \lesssim r^{-2m} [r^{-2m} \bar \partial_r]^{-1} |\psi^-|
  \]
  Returning to $\psi_2$ we get the pointwise bound
  \begin{equation}\label{psi2point}
    |\frac{\psi_2}{A_2+m}| \lesssim  r^{-2m} [r^{-2m} \bar \partial_r]^{-1} |\psi^-|
  \end{equation}
  Note that if $|\frac{\psi_2}{A_2+m}| \leq \frac{1}{8m}$ then $|\frac{\psi_2}{A_2+m}| \approx |\psi_2|$.
   The construction of the solution on $(0,R^{-1})$ gives the corresponding part of
  \eqref{loc2est} since \eqref{psi2point} holds on any such interval. 
  Getting the $(0,\epsilon^-1 R]$ part of \eqref{loc2est} is slightly more delicate.
  It suffices to get the $L^2$ bound for  $\frac{\psi_2}{r}$.  From \eqref{psi2point} we have
  \[
  |\psi_2| \lesssim r^{-2m} [r^{-2m} \bar \partial_r]^{-1} ( 1_{(0,R]}|\psi^-|) +
  r^{-2m} [r^{-2m} \partial_r]^{-1} (1_{[R,\infty)}|\psi^-|)
  \]
  For the second term we use the smallness of $\psi_2$ in the
  hypothesis.  For the first one we instead produce a pointwise bound
  using Cauchy-Schwarz:
  \[
  r^{-2m} [r^{-2m} \bar \partial_r]^{-1} (1_{(0,R]}|\psi^-|) \lesssim r^{-2m}
  \int_0^R s^{2m} |\psi^-(s)| ds \lesssim (r^{-1} R)^{2m}
  \|\psi^-\|_{L^2}, \qquad r > R
  \]
  This implies the desired $L^2$ bound. 

  Next we turn our attention to \eqref{1estdif} and \eqref{2estdif}.
  In fact, in the case of \eqref{1estdif}, in light of the
  conservation law $\tilde A_2^2 - |\tilde \psi_2|^2=m^2$,
  \eqref{1estdif} follows from \eqref{2estdif} with $E_1=E_2=0$. Hence
  we focus our attention on \eqref{2estdif}. We denote
  \[
  \delta \psi = \tilde{\psi} - \psi, \qquad \delta A_2=\tilde A_2-A_2
  , \qquad \delta \psi_2=\tilde \psi_2 - \psi_2
  \]
  Without any restriction in generality we can make the assumption $\|
  \delta \psi \|_{L^2} \ll 1$ and the bootstrap assumption
  \begin{equation} \label{boot2} \| \delta \psi_2\|_{L^\infty} + \|
    \delta A_2\|_{L^\infty} + \| \frac{\delta \psi_2}r\|_{L^\infty} +
    \|\frac{ \delta A_2}r\|_{L^\infty} \lesssim \epsilon^\frac12 + \|
    \delta \psi \|_{L^2}^\frac12
  \end{equation}

  Then we derive the equations for them modulo error terms.  We have
  \[
  \begin{split}
    \partial_r \delta \psi_2= & \ i \delta A_2 \tilde \psi^- + i A_2
    \delta \psi - \frac1r A_2 \delta \psi_2 - \frac1r \delta A_2
    \tilde \psi_2 + E_1
    \\
    \partial_r \delta A_2 = & - \ \Im ( \psi^- \overline{\delta \psi_2})
    - \Im(\delta \psi \overline{\tilde \psi_2}) - \frac{2}{r} A_2
    \delta A_2 - \frac{1}{r} (\delta A_2)^2+ E_2
  \end{split}
  \]
  The following terms $i A_2 \delta \psi, \Im(\delta \psi
  \overline{\tilde \psi_2}) $ can be directly included into the error
  terms $E_1, E_2$, while the quadratic term $\frac{1}{r} (\delta
  A_2)^2$ can be included in the error term $E_2$ based on
  \eqref{boot2}.  We obtain the following linear system for $(\delta
  \psi_2,\delta A_2)$:
  \[
  \begin{split}
    \partial_r \delta \psi_2= & \ -\frac{m}r \delta \psi_2+ i \tilde
    \psi^- \delta A_2 - \frac1r (A_2-m) \delta \psi_2 - \frac1r \delta
    A_2 \tilde \psi_2 + E_1
    \\
    \partial_r \delta A_2 = & \ - \frac{2m}{r} \delta A_2 + \Im ( \psi^-
    \overline{ \delta \psi_2}) - \frac{2}{r} (A_2-m) \delta A_2 + E_2
  \end{split}
  \]
  By considering the $\Re \delta \psi_2, \Im \delta \psi_2$
  separately, this is a system of the form
  \[
  \partial_r X = - \frac{m}r LX + B X + F, \qquad
  L =\left( \begin{array}{ccc} 1 & 0 & 0 \\
      0 & 1 & 0 \\ 0 & 0 & 2 \end{array} \right)
  \]
  where the matrices $B,F$ satisfy $B \in L^2$ and $F \in L^2+ L^1(dr)$.
  This system needs to be solved with zero Cauchy data at infinity.
  For this system we need to establish the bound
  \begin{equation}
    \| X\|_{L^\infty} +  \| \frac{X}{r}\|_{L^2} \lesssim \| F\|_{L^2 + L^1(dr)}
  \end{equation}
  If $B=0$ then
  \[
  X =\left( \begin{array}{ccc} r^{-m} [r^{-m} \bar \partial_r]^{-1}  & 0 & 0 \\
      0 & r^{-m} [r^{-m} \bar \partial_r]^{-1} & 0 \\ 0 & 0 & r^{-2m}
      [r^{-2m} \partial_r]^{-1} \end{array} \right) F
  \]
  and the conclusion easily follows from argument of type
  \eqref{rdrm}.  If $B$ is small in either $L^2(rdr)$ or in $r^{-1}
  L^\infty$ then we can treat the $BX$ term perturbatively. If $B$ is
  large then some more work is needed.  We decompose $B = B_1+B_2$
  where $B_1 \in L^1(dr)$ and $|B_2| \ll \frac{1}r $.  We can
  construct the bounded matrix $e^{\int B_1}$ as a solution of
  $\partial_r e^{\int B_1}= e^{\int B_1} B_1$ which also has a bounded
  inverse. Then we can eliminate $B_1$ by conjugating with respect to
  $e^{\int B_1}$, and then treat the part with $B_2$ perturbatively.

  iv) From \eqref{psi2point}, \eqref{A2pest} and \eqref{rdrm} we obtain
  \[
  \| \frac{\psi_2}{r} \|_{L^p} \les \| A_2+m \|_{L^\infty}  \| \psi^- \|_{L^p} \les \| \psi^- \|_{L^p} (m+ \| \psi^- \|^2_{L^2})
  \]
  from which \eqref{strfix} follows since $\psi^+= 2i \frac{\psi_2}r +
  \psi^-$ and $A_2-m=\frac{|\psi_2|^2}{A_2+m}$.

  v) Throughout this argument, the use of Sobolev embedding refers to
  the two-dimensional standard Sobolev embeddings which apply to
  $R_{m \pm 1} \psi^\pm$, which then can be read in terms of $\psi^\pm$.

  If $s=1$ then we use \eqref{compnew} to obtain
  \begin{equation} \label{rega1}
  (r \partial_r + (m+1))\psi^+ = (r \partial_r - (m-1)) \psi^- - (A_2-m) (\psi^+ + \psi^-) 
  \end{equation}
  from which
  \[
  \psi^+ = r^{-m-1} [r^m \bar \partial_r]^{-1} \left( (r \partial_r - (m-1)) \psi^- - (A_2-m) (\psi^+ + \psi^-)  \right)  
  \]
  From the Sobolev embedding and \eqref{strfix} we obtain 
  \[
  \| \frac{ (A_2-m) (\psi^+ + \psi^-)}r \|_{L^2} \les  \| \frac{ A_2-m}r\|_{L^4} \| \psi^+ + \psi^- \|_{L^4}  \les \| R_{m-1} \psi^- \|_{H^1}^2
  \] 
  which combined with \eqref{rdrm} gives $\| \frac{\psi^+}r\|_{L^2} \les \| R_{m-1} \psi^- \|_{H^1}$. Plugging this back in 
  \eqref{rega1} gives $\| \partial_{r} \psi^+ \|_{L^2} \les \| R_{m-1} \psi^- \|_{H^1}$ from which the statement follows for $s=1$.

  If $s=2$ we differentiate \eqref{rega1} to obtain
  \[
  \begin{split}
  H_{m+1} \psi^+ & = (\frac1r \partial_r - \frac{m+1}{r^2}) (r \partial_r + (m+1))\psi^+ \\
  & = (\frac1r \partial_r - \frac{m+1}{r^2}) \left[  (r \partial_r - (m-1)) \psi^- - (A_2-m) (\psi^+ + \psi^-)  \right] \\
  &= H_{m-1} \psi^- + (-\frac{2m}r \partial_r + \frac{2m^2-2m}{r^2}) \psi^- 
  - (\frac1r \partial_r - \frac{m+1}{r^2}) \left[  (A_2-m) (\psi^+ + \psi^-)  \right]
  \end{split}	
  \]
   From Lemma \ref{LBE} it follows that $\| (-\frac{2m}r \partial_r + \frac{2m^2-2m}{r^2}) \psi^-  \|_{L^2} \les \| H_{m-1}  \psi^-\|_{L^2}$. 
  From part vii) of Lemma \ref{LBE} we have that $\| \psi^- \|_{L^6} \les \| R_{m-1} \psi^- \|_{H^1}$ and by \eqref{strfix} 
  $\| \frac{\psi_2}r \|_{L^6} + \| \psi^+ \|_{L^6} \les \| R_{m-1} \psi^- \|_{H^1}$, hence we estimate
  \[
  \| \frac1r \partial_r A_2 (\psi^+ + \psi^-)  \|_{L^2} \les (\| \psi^- \|_{L^6} + \| \frac{\psi_2}r \|_{L^6}) 
  \| \frac{\psi_2}r \|_{L^6} (\| \psi^- \|_{L^6} + \| \psi^+ \|_{L^6}) \les \| R_{m-1} \psi^- \|_{H^1}^3
  \]
  \[
  \| \frac{A_2-m}{r^2} (\psi^+ + \psi^-)  \|_{L^2} \les \| \frac{\psi_2}r \|^2_{L^6} 
  (\| \psi^- \|_{L^6} + \| \psi^+ \|_{L^6}) \les \| R_{m-1} \psi^- \|_{H^1}^3
  \]
 Using Lemma \ref{LBE} we estimate
 \[
 \| \frac{A_2-m}r \partial_r \psi^- \|_{L^2} \les  \| \frac{A_2-m}r \|_{L^4}  \| \partial_r \psi^- \|_{L^4}   \les \| R_{m-1} \psi^- \|_{H^2}^3
 \] 
  
 If $s=3$ then from the above expression for $H_{m+1} \psi^+$ we obtain
 \[
  \begin{split}
  \frac1r H_{m+1} \psi^+ & = \frac1r  H_{m-1} \psi^- + \frac1r (-\frac{2m}r \partial_r + \frac{2m^2-2m}{r^2}) \psi^- 
  - (\frac1{r^2} \partial_r - \frac{m+1}{r^3}) \left[  (A_2-m) (\psi^+ + \psi^-)  \right]
  \end{split}	
  \]
 If $m=1$ then the linear part becomes $(\frac{\partial_r^2}{r}-\frac{\partial_r}{r^2})\psi^- \in L^2$ by Lemma \ref{LBE}.
 If $m=2$ then we have $\frac{1}r H_1 \psi^- \in L^2$, and from Lemma \ref{LBE}, it follows that  
 $4(-\frac{\partial_r}{r^2} + \frac{1}{r^3}) \psi^- \in L^2$. If $m \geq 3$, then all the linear terms belong to $L^2$
 in light of Lemma \ref{LBE}. 
 As for the nonlinear terms, we have 
 \[
 (\frac1{r^2} \partial_r - \frac{2}{r^3}) \left[  (A_2-1) (\psi^+ + \psi^-)  \right] 
 = \partial_r \left[  \frac{A_2-1}{r^2} (\psi^+ + \psi^-)  \right] 
 = \frac14 \partial_r \left[  \frac1{A_2+1} |\psi^+ + \psi^-|^2 (\psi^+ + \psi^-)  \right]
 \]
 which can be easily shown to belong to $L^2$ by using vii) and viii) of Lemma \ref{LBE}.

 Finally we apply $\partial_r$ to the expression giving $H_{m+1} \psi^+$ and show that 
 $\partial_r H_{m+1} \psi^+ \in L^2$ in a similar manner. The details are left to the reader.

\end{proof}

\begin{proof}[Proof of Proposition \ref{recprop}]
  With $\psi_2,A_2$ constructed above, we can reconstruct
  $\psi_1=\psi^- +i\frac{\psi_2}r$. Then we solve the system
  \eqref{return} at the level of $(\bar{u}, \bar{v}, \bar{w})$. We
  would like to solve this system with condition at $\infty$,
  $\bar{u}=\overrightarrow{k}, \bar{v}=\overrightarrow{i},
  \bar{w}=\overrightarrow{j}$.  But this cannot be done
  apriori. Indeed, consider the coefficient matrix in \eqref{return}
  \[
  M=\left( \begin{array}{ccc} 0 & \Re{\psi_1} & \Im{\psi_1} \\
      \Re{\psi_1} & 0 & 0 \\ \Im{\psi_1} & 0 & 0 \end{array} \right)
  \]
  Since $M \notin L^1(dr)$, it is not meaningful to initialize the
  problem \eqref{return} at $\infty$. However $M$ has another structure
  which is a consequence of \eqref{comp} rewritten as $\psi_1= (A_2+1) \psi_1 + i \partial_r \psi_2$.
 Therefore $ M= N+ \partial_r K$ and, by \eqref{2est}, $N,K$ satisfy  
 \[
 \| N \|_{L^1(dr)} + \| K \|_{\dHe} \les \| \psi^- \|_{L^2}
 \]
This inequality localizes on intervals $[r,\infty)$ due to \eqref{loc2est}. 
 This allows us to construct solutions with data at $r=\infty$ by using the iteration
  scheme
  \[
  X=\sum_{i} X_i, \qquad X_0=X(\infty), \qquad X_i(r) =
  \int_{r}^\infty M(s) X_{i-1} ds
  \]
  We run the iteration scheme in the space $C([r,\infty])$ of continuous functions on $(r,\infty)$ which
  have limits at $\infty$. Under the assumption that $X_{i-1} \in C([r,\infty])$ we obtain
\[
\begin{split}
X_i(r)& =\int_{r}^\infty (N(s)+\partial_s K(s)) X_{i-1} ds \\
& = \int_{r}^\infty N(s) X_{i-1} ds- K(r)X_{i-1}(r) - \int_{r}^\infty K(s) \partial_s X_{i-1}(s) ds
\end{split}
\]
and further that
\[
\| \partial_r X_i \|_{L^2([r,\infty))} + \| X_i \|_{C([r,\infty])} 
\les \| \psi^- \|_{L^2([r,\infty))} (\| X_{i-1} \|_{L^\infty([r,\infty])}+ \| \partial_r X_{i-1} \|_{L^2([r,\infty))})
\]
 Therefore, inductively, we obtain
 \[
\| \partial_r X_i \|_{L^2([r,\infty))} + \| X_i \|_{C([r,\infty])} 
\les \| \psi^- \|_{L^2([r,\infty))}^i 
\]
  By choosing $R$ large such that $\| \psi \|_{L^2([R,\infty))}$ is small, 
  we can rely on an iteration scheme to construct the solution $X$
  on $[R,\infty)$. 
  
  The uniqueness of this solution is guaranteed by the conservation
  law $|\bar u|,|\bar v|,|\bar w| = constant$ which follows from the particular form of
  $M$.

  This also guarantees that the orthonormality conditions imposed at
  $\infty$ are preserved (recall that $\infty$,
  $\bar{u}=\overrightarrow{k}, \bar{v}=\overrightarrow{i},
  \bar{w}=\overrightarrow{j}$).  The solution constructed above can be
  extended to $(0,\infty)$ by running a similar argument on intervals where $\| \psi^- \|_{L^2(I)}$
  is small, where the last interval is of the form $(0,r]$. 
  
  The above argument leads to an estimate of the form
  \[
  \| X - X_0\|_{C([0,\infty])} + \| \partial_r X \|_{L^2} \les \| \psi^- \|_{L^2}
  \]
  where by $C([0,\infty])$ we mean continuous functions on $(0,\infty)$ which have limits at $0$ and $\infty$. 
  
  Additional information on $\bar{u},\bar{v},\bar{w}$ will be obtained
  in a different manner. Notice that $\bar{u}_3$ and $\zeta =
  \bar{w}_3- i \bar{v}_3$ solve the system
  \[
  \partial_r \bar{u}_3 = -\Im{(\psi_1 \bar{\zeta})}, \qquad \partial_r
  \zeta = i \bar{u}_3 \psi_1
  \]
  which is the same as the one satisfied by $A_2, \psi_2$. Since the
  conditions at $\infty$ are proportional with a constant $m$,
  we conclude that $m \bar{u}_3=A_2, - m \zeta = \psi_2$.
  From this and the fact that $A_2^2 - |\psi_2|^2=m^2$ it follows also
  that $m^2(|\bar u_1|^2 + |\bar u_2|^2)=|\psi_2|^2$. 
  
  Next, we extend the system of vectors to $u,v,w$ using the
  equivariant setup, i.e.  by multiplying them with $e^{m \theta
    R}$. Using the identification just described above and the
  orthonormality conditions, it follows that \eqref{return} is
  satisfied for $k=2$.  Therefore we have just established the
  existence of an equivariant map $u$ whose vector field
  $\mathcal{W}^-$ in the gauge $(v,w)$ is $\psi^-$ and whose gauge
  elements are $\psi_1,\psi_2,A_2$. Moreover, we have that
  \[
  E(u)= \pi \| \psi^- \|_{L^2}
  \]

Given two fields $\psi^-, \tilde \psi^-$ we reconstruct $X$ and $\tilde X$ as above. Since the construction is
iterative it also follows that
\[
\| X-\tilde X \|_{C[0,\infty]} + \| \partial_r (X-\tilde X) \|_{L^2} \les \| \psi - \tilde \psi \|_{L^2}
\] 
from which the derivative part in $E(u-\tilde u)$ follows. Since $u_1 = v_2 w_3 - v_3 w_2$, 
$\tilde u_1 = \tilde v_2 \tilde w_3 - \tilde v_3 \tilde w_2$, $\psi_2 = -m(\bar w_3 - i \bar v_3)$ and
$\tilde \psi_2 = -m(\bar{ \tilde w}_3 - i \bar{ \tilde v}_3)$ it follows that
\[
\|\frac{u_1 - \tilde u_1}r \|_{L^2} 
\les \| \frac{\psi_2-\tilde \psi_2}r \|_{L^2} \| X \|_{L^\infty} + \| X - \tilde X \|_{L^\infty} \| \frac{\tilde \psi_2}r \|_{L^2}
\les \| \psi^- - \tilde \psi^- \|_{L^2}
\]
A similar argument shows that $\|\frac{u_1 - \tilde u_1}r \|_{L^2}  \les \| \psi^- - \tilde \psi^- \|_{L^2}$ which completes 
the proof of \eqref{udif}. 
 
\end{proof}

\section{The Cauchy problem} \label{CTH}

In this section we are concerned with the nonlinear system of
equations \eqref{psieq} which we recall here
\begin{equation*}
 \left\{ \begin{array}{l} 
     (i \partial_t + H_{m-1}) \psi^- = (A_0 - 2 \frac{A_2 - m}{r^2} +
      \frac1{r}\Im{(\psi_2 \bar{\psi}^-)}) \psi^- \cr 
      (i \partial_t + H_{m+1}) \psi^+ = ( A_0 + 2 \frac{A_2 - m}{r^2} - \frac1{r}\Im{(\psi_2
        \bar{\psi}^+)}) \psi^+
    \end{array} \right.
\end{equation*}
where $\psi_2,A_2,A_0$ are given by \eqref{rela}, \eqref{A2},
respectively \eqref{A0}.  The problem comes with an initial data
$\psi^\pm(t_0)=\psi^\pm_0$ and we would like to understand its
well-posedness on intervals $I \subset \R$ with $t_0 \in I$.

We will be mainly interested in solutions of this system which come
from Schr\"odinger maps, i.e. they satisfy the compatibility conditions \eqref{compnew}.

For simplicity we denote the nonlinearities by
\begin{equation} \label{psin}
  \begin{split}
    N^\pm_m(\psi^\pm) = (A_0 \pm 2 \frac{A_2-m}{r^2} \mp
    \frac1{r}\Im{(\psi_2 \bar{\psi}^\pm)}) \psi^\pm 
   \end{split}
\end{equation}
 
We define the mass of a function $f$ by $M(f):=\| f \|^2_{L^2}$.  The
system \eqref{psieq} formally conserves the mass,
i.e. $M(\psi^-(t))=M(\psi^-(0))$ and $M(\psi^+(t))=M(\psi^+(0))$ for
all $t$ in the interval of existence. Moreover, as discussed in
subsection \ref{COG}, a compatible pair also satisfies $\| \psi^+(0)
\|_{L^2}=\| \psi^-(0) \|_{L^2}$.

\subsection{Strichartz estimates}

We begin our analysis by understanding the linear equation
\begin{equation} \label{beq} (i \partial_t + H_k) u =f, \qquad
  u(0)=u_0
\end{equation}
where we recall $H_k=\partial_r^2 + \frac1r \partial_r -
\frac{k^2}{r^2}$. 

Our first claim is that, for each $k$, $u$ satisfies the standard
Strichartz estimates
\begin{equation} \label{STR} \| |\nabla|^s R_k u \|_{L^p_t L^q_r} \les
  \||\nabla|^s R_k u_0 \| + \| |\nabla|^s R_k f \|_{L^{\tilde p'}_t
    L^{\tilde q'}_r}
\end{equation}
where $|\nabla|^s=(-\Delta)^\frac{s}2$ (defined in the usual manner),
$(p,q),(\tilde p, \tilde q)$ are admissible pairs in two dimensions
($\frac{1}{p}+\frac{1}{q}=\frac12, 2 < p \leq \infty$) and $(\tilde
p', \tilde q')$ is the dual pair of $(\tilde p, \tilde q)$. Indeed,
$R_k u$ satisfies the following equation
\[
\begin{split}
  (i\partial_t + \Delta) R_k u= R_k f, \qquad R_k u(0)=R_k u_0
\end{split}
\]
Then the Strichartz estimates follow from the standard Strichartz in
two dimensions.  We need to read the Strichartz estimates
at the level of the radial functions.  For even powers of $s$ we use
the identity $\Delta R_k v = R_k H_k v$, hence
\begin{equation} \label{STRE} \| H_k v \|_{L^p_t L^q_r} = \| \Delta
  R_k v \|_{L^p_t L^q_x}
\end{equation}
and this can be extended to higher regularity but we will not need it.

For odd values of $s$ we use that $|\nabla|^s=|\nabla|
(-\Delta)^{\frac{s-1}2}$ and that for $k \ne 0$
\begin{equation} \label{STRO} \| \partial_r v \|_{L^p_t L^q_r} + \|
  \frac{v}r \|_{L^p_t L^q_r} \les \| |\nabla| R_k v \|_{L^p_tL^q_x}
\end{equation}
while for $k=0$
\begin{equation} \label{STRZ} \| \partial_r v \|_{L^p_t L^q_r} \les \|
  |\nabla| R_k v \|_{L^p_tL^q_x}
\end{equation}

In the context of additional regularity, we need to make improved
versions of the Strichartz estimates. We recall the following result from \cite{BIKT2}. 

\begin{lema} \label{LB} Assume that $u$ satisfy \eqref{beq} with
  initial data $u_0$ and forcing $f$.
  
  i) If $u_0 \in L^2$ is such that $H_k u_0 \in L^2$, for $k \geq 2$, then the
  following holds true
  \[
  \||\partial_r^2 u| + | \frac{\partial_r u}{r}|+
  |\frac{u}{r^2}| \|_{L^\infty L^2 \cap L^4 L^4 \cap L^3L^6} \les \|
  H_k u_0 \|_{L^2} + \| H_k f \|_{L^1L^2}
  \]
  ii) If $u_0 \in L^2$ is such that $H_1 u_0 \in L^2$  then the
  following holds true
  \[
 \|  \partial_r^2 u \|_{L^\infty L^2  \cap L^4 L^4 \cap L^3L^6} + \| \frac{1}{r} (\partial_r-\frac1r) u \|_{L^\infty L^2
    \cap L^4 L^4 \cap L^3L^6} \les \| H_1 u_0 \|_{L^2} + \| H_1 f \|_{L^1 L^2}
  \]
\end{lema}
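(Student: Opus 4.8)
The plan is to first obtain the full space-time bound on $H_k u$ and then, at each fixed time, convert it into the stated bounds on the individual radial components through an elliptic (Calder\'on--Zygmund) argument. For the first step, since $H_k$ is time independent and commutes with the evolution, I would apply the Strichartz estimate \eqref{STR} with $s=2$ to $u$ (choosing the dual pair $(1,2)$ on the forcing) and use the identity \eqref{STRE} to pass from $|\nabla|^2 R_k = -\Delta R_k$ to $H_k$. All three pairs $(\infty,2),(4,4),(3,6)$ are admissible and non-endpoint in two dimensions, so this yields with no endpoint trouble
\begin{equation*}
\| H_k u \|_{L^\infty L^2 \cap L^4 L^4 \cap L^3 L^6} \les \| H_k u_0 \|_{L^2} + \| H_k f \|_{L^1 L^2}.
\end{equation*}

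It then suffices to prove, for each fixed $t$ and each $q \in \{2,4,6\}$, a \emph{fixed-time} inequality bounding the left-hand side of the lemma in $L^q(rdr)$ by $\| H_k u \|_{L^q(rdr)}$; the time integrability is inherited from the display above. For this I would pass to the two-dimensional extension $w = R_k u$, for which $\Delta w = R_k H_k u$, and write $\Delta = 4\,\partial_z \partial_{\bar z}$ with $\partial_z = \tfrac12(\partial_{x_1} - i\partial_{x_2})$ and $\partial_{\bar z} = \tfrac12(\partial_{x_1} + i\partial_{x_2})$. A direct computation shows that $\partial_z\partial_{\bar z} w$, $\partial_{\bar z}^2 w$, $\partial_z^2 w$ are again equivariant, of the form $e^{ik'\theta}(\cdot)$, with radial parts equal respectively to $\tfrac14 H_k u$ and to two explicit linear combinations of $u''$, $r^{-1}u'$, $r^{-2}u$. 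Since $\partial_{\bar z}^2\Delta^{-1}$ and $\partial_z^2\Delta^{-1}$ are zeroth-order Calder\'on--Zygmund operators, bounded on $L^q(\R^2)$ for $1<q<\infty$, and since for $q<\infty$ the $L^q(\R^2)$ norm of an equivariant function is comparable to the $L^q(rdr)$ norm of its radial part, all three radial combinations are controlled in $L^q(rdr)$ by $\| H_k u \|_{L^q(rdr)}$.

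Writing $X = u''$, $Y = r^{-1}u'$, $Z = r^{-2}u$, these three combinations form a linear system whose determinant equals $-8k(k^2-1)$. For $k \geq 2$ this is nonzero, so $X,Y,Z$ are recovered separately, giving part i). For $k=1$ the determinant vanishes: the $\partial_z^2$ combination collapses exactly onto the $H_1 u$ combination, leaving only two independent equations, from which one recovers $u''$ and the combination $r^{-1}u' - r^{-2}u = r^{-1}(\partial_r - r^{-1})u$, but not the two terms separately. This is precisely the content of part ii), and for $q=2$ the entire scheme reduces to Lemma \ref{LBE}(i),(ii).

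The main obstacle is the fixed-time estimate for $q \neq 2$. For $q=2$ one may simply invoke the Hankel/Plancherel identity already recorded in Lemma \ref{LBE}, but for $q \in \{4,6\}$ one must genuinely use the $L^q$-boundedness of the second-order Riesz-type transforms and then track the exact radial combinations they produce. The vanishing of the determinant at $k=1$ is not a technical artifact: it reflects the real failure of $r^{-2}u$ and $r^{-1}u'$ to be individually controlled, which is what forces the combined quantity in part ii). Some care is likewise needed in the equivariant-to-radial norm comparison, which is only available for $q<\infty$ (hence the absence of an $L^2_t L^\infty_r$-type endpoint here).
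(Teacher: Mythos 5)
Your argument is correct, but note that this paper does not actually prove Lemma \ref{LB} --- it is recalled verbatim from \cite{BIKT2} --- so there is no in-paper proof to match; what you have written is a legitimate self-contained proof. The two steps are both sound: since $R_k u$ solves the free $2$D Schr\"odinger equation with forcing $R_k f$ and $\Delta$ commutes with the flow, \eqref{STR} with the dual pair $(1,2)$ together with \eqref{STRE} gives the space-time bound on $H_k u$ in all three (non-endpoint, admissible) norms; and the fixed-time reduction via $\partial_z^2\Delta^{-1}$, $\partial_{\bar z}^2\Delta^{-1}$ is the right tool, since the radial parts of $\partial_z^2 R_k u$ and $\partial_{\bar z}^2 R_k u$ are $\tfrac14\bigl(u''+\tfrac{2k-1}{r}u'+\tfrac{k(k-2)}{r^2}u\bigr)$ and $\tfrac14\bigl(u''-\tfrac{2k+1}{r}u'+\tfrac{k(k+2)}{r^2}u\bigr)$, and together with $H_ku$ the coefficient matrix in $(u'',r^{-1}u',r^{-2}u)$ indeed has determinant $-8k(k^2-1)$. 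Your reading of the $k=1$ degeneracy is also exactly right: the $\partial_z^2$ row coincides with the $H_1$ row, one recovers $u''$ from $3H_1u+\,$($\partial_{\bar z}^2$ row) and then only the combination $r^{-1}(\partial_r-r^{-1})u$, which is precisely why part ii) is stated the way it is. The instructive contrast with the paper's own toolkit is that the fixed-time $L^2$ version, Lemma \ref{LBE}, is proved there by Hankel transform and the Bessel recurrences \eqref{derB} --- a Plancherel argument that is intrinsically $L^2$ and does not give the $L^4$ and $L^6$ spatial bounds needed here; your Calder\'on--Zygmund route covers all $1<q<\infty$ uniformly, at the mild cost of the standard justification that $D^2 w$ is recovered from $\Delta w$ by second-order Riesz transforms (harmless here since $u_0, H_ku_0\in L^2$ and one can regularize), and of the observation, which you correctly flag, that the comparison $\|e^{ik'\theta}g\|_{L^q(\R^2)}\approx\|g\|_{L^q(rdr)}$ requires $q<\infty$.
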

These are improved versions of Strichartz estimates from the following
point of view.  In i) the inequality for $(\partial_r^2 +
\frac1r \partial_r - \frac{k^2}{r^2}) u=H_2 u$ is the Strichartz
estimate for $H_2 u$ which follows from \eqref{STR} and \eqref{STRE};
our statement is stronger in saying that each term satisfies the
Strichartz estimate. A similar remark is in place for part i). Note the consistency with Lemma \ref{LBE}. 

\subsection{Setup and Cauchy theory}
In order to make estimates shorter, we make the following notation
convention $\| f^\pm \| = \| f^+ \| + \| f^- \|$ for various $f$'s and
$\| \cdot \|$ involved in the rest of the paper.

Since our non-linear analysis relies mostly on the $L^4_{t,r}$ norm,
we define the Strichartz norm of $f:I \times \R^2 \rightarrow \C$ by
$S_I(f):= \| f \|^4_{L^4(I \times \R)}$.  If $t_0 \in I$ then we
define $S_{I,\leq t_0} f = \| 1_{I \cap (-\infty,t_0]} f \|^4_{L^4}$
and $S_{I,\geq t_0} f = \| 1_{I \cap [t_0, \infty)} f \|^4_{L^4}$.

We say that a solution $\psi^\pm: I \times \R \rightarrow \C$ blows up
forward in time if $S_{I,\geq t} \psi^\pm = + \infty, \forall t \in
I$.  Similarly $\psi^\pm$ blows up backward in time if $S_{I,\leq t}
\psi^\pm = + \infty, \forall t \in I$.

A possibility that may occur is that for some interval $I$, $S_{I,\geq
  t_0} \psi^+ = +\infty$ while $S_{I,\geq t_0} \psi^- < \infty$, or
any other combination. However from \eqref{str} it follows that
solutions satisfying the compatibility condition \eqref{compnew} we
have that $S_J(\psi^+) \approx S_J(\psi^-)$ on any time interval
$J$. Therefore for such solutions (which we will be mainly interested
in) the above scenario is ruled out.

Let $\psi_+^\pm \in L^2$. We say that the solution $\psi^\pm: I \times
\R \rightarrow \C$ scatters forward in time to $\psi_+^\pm$ iff $\sup
I= + \infty$ and $\lim_{t \rightarrow \infty}
M(\psi^\pm(t)-e^{itH_{m \pm 1}}\psi_+^\pm)=0$.  We say that the solution
$\psi^\pm: I \times \R \rightarrow \C$ scatters backward in time to
$\psi_{-}^\pm$ iff $\inf I= - \infty$ and $\lim_{t \rightarrow -\infty}
M(\psi^\pm(t)-e^{itH_{m \pm 1}}\psi_-^\pm)=0$.

Our first theorem provides the general Cauchy theory for
\eqref{psieq}.

\begin{theo} \label{CT} Consider the problem \eqref{psieq} (with
  $\psi_2,A_2,A_0$ given by \eqref{rela}, \eqref{A2}, \eqref{A0}) with
  $\psi_0^\pm \in L^2$. Then there exists a unique maximal-lifespan
  solution pair $(\psi^+,\psi^-): I \times \R^2$ with $t_0 \in I$ and
  $\psi^\pm(t_0)=\psi_0^\pm$ with the additional properties:

  i) I is open.

  ii) (Forward scattering) If $\psi^\pm$ do not blow up forward in
  time, then $I_+=[0,\infty)$ and $\psi^\pm$ scatters forward in time
  to $e^{itH_{m \pm 1}} \psi_+^\pm$ for some $\psi_+^\pm \in L^2$.

  Conversely, if $\psi_+^\pm \in L^2$, then there exists a unique
  maximal-lifespan solution $\psi^\pm$ which scatters forward in time
  to $e^{itH_{m \pm 1}} \psi_+^\pm$.

  iii) (Backward scattering) A similar statement to ii) holds true for
  the backward in time problem.

  iv) (Small data scattering) There exist $\epsilon > 0$ such that if
  $M(\psi_0^\pm) \leq \epsilon$ then $S_{\R}(\psi^\pm) \les
  M(\psi_0^\pm)^2$. In particular, the solution does not blow up and
  we have global existence and scattering in both directions.

  v) (Uniformly continuous dependence) For every $A > 0$ and $\epsilon
  > 0$ there is $\delta > 0$ such that if $\psi^\pm$ is a solution
  satisfying $S_J(\psi^\pm) \leq A$ and $t_0 \in J$, and such that
  $M(\psi_0^\pm - \tilde \psi_0^\pm) \leq \delta$, then there exists a
  solution such that $S(\psi^\pm -\tilde \psi^\pm) \leq \epsilon$ and
  $M(\psi(t)-\tilde \psi(t)) \leq \epsilon, \forall t \in J$.

  vi) (Stability result) For every $A > 0$ and $\epsilon > 0$ there
  exists $\delta > 0$ such that if $S_J(\psi^\pm) \leq A$, $\psi^\pm,$
  approximate \eqref{psieq}, in the sense
  \[
  \| (i \partial_t + H_{m \pm 1}) \psi^\pm - N^\pm(\psi^\pm) \|_{L^\frac43(J
    \times \R)} \leq \delta,
  \]
  $t_0 \in J, \tilde \psi_0^\pm \in L^2$ and
  $S_J(e^{i(t-t_0)H^\pm}(\psi^\pm(t_0) - \tilde \psi_0^\pm)) \leq
  \delta$, then there exists a solution $\tilde \psi^\pm$ on $I$ to
  \eqref{psieq} with $\tilde \psi^\pm(t_0)= \tilde \psi_0^\pm$ and
  $S_J(\psi^\pm - \tilde \psi^\pm) \leq \epsilon$.

  vii) (Additional regularity) Assume that, in addition, $R_{m \pm 1}
  \psi^\pm_0 \in H^s$ for $s \in \{1,2,3\}$. If
  $J$ is an interval such that $S_{J}(\psi^\pm) \leq A < + \infty$,
  then the solution $\psi^\pm$ satisfies
  \begin{equation} \label{addreg} \| R_{m \pm 1} \psi^\pm(t) \|_{H^s} \les_A
    \| R_{m \pm 1} \psi^\pm_0 \|_{H^s}, \qquad \forall t \in J
  \end{equation}  
  and it also has Lipschitz dependence with respect to the initial
  data.
\end{theo}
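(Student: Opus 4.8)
\emph{Strategy.} I would treat \eqref{psieq} as a nonlocal, mass-critical cubic Schr\"odinger system and build the entire Cauchy package by the standard perturbative method, with the critical scattering norm taken to be $L^4_{t,r}$; this parallels the $\mu=1$ analysis of \cite{BIKT2}. The foundation is a single trilinear estimate for the nonlinearities \eqref{psin}. The point is that, through the explicit formulas \eqref{rela}, \eqref{A2}, \eqref{A0}, each of the factors $A_0$, $\frac{A_2-m}{r^2}$ and $\frac1r\Im(\psi_2\bar\psi^\pm)$ is \emph{quadratic} in $\psi^\pm$ and is built out of the $L^p$-bounded operators recorded in \eqref{rdrm} (for instance $\frac{A_2-m}{r^2}=\frac14\,r^{-2}[r^{-1}\bar\partial_r]^{-1}(|\psi^+|^2-|\psi^-|^2)$ is controlled by the $m=1$ bound in \eqref{rdrm}). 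Hence $N^\pm$ is genuinely trilinear, and I would establish
\[
\| N^\pm(\psi^\pm) \|_{L^{4/3}_{t,r}(I\times\R)} \les \|\psi^\pm\|_{L^4_{t,r}(I\times\R)}^3,
\]
together with the companion difference estimate bounding $N^\pm(\psi)-N^\pm(\tilde\psi)$ by $\|\psi-\tilde\psi\|_{L^4_{t,r}}(\|\psi\|_{L^4_{t,r}}^2+\|\tilde\psi\|_{L^4_{t,r}}^2)$. Here one uses H\"older with $\frac34=\frac14+\frac12$ in space-time, placing the quadratic factor in $L^2_{t,r}$ (the square of $L^4_{t,r}$) via the $L^2_r$-boundedness of the Hardy-type operators.

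\emph{Small-data and local theory (iv), (i), and scattering.} Combining the trilinear estimate with the Strichartz bound \eqref{STR} for the admissible pair $(4,4)$, a contraction in the ball $\{\|\psi^\pm\|_{L^4_{t,r}(\R)}\le C\epsilon\}$ yields, for $M(\psi_0^\pm)\le\epsilon$, a global solution with $S_\R(\psi^\pm)\les M(\psi_0^\pm)^2$; scattering in both directions then follows from finiteness of the Strichartz norm by the usual convergence of the Duhamel integral in $L^2$. For arbitrary $L^2$ data the same fixed point works after splitting $I$ at a time where the free evolution $e^{i(t-t_0)H_{m\pm1}}\psi_0^\pm$ has small $L^4_{t,r}$ norm, producing the maximal-lifespan solution and the openness of $I$ asserted in (i).

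\emph{Stability, continuous dependence, and the dichotomy (vi), (v), (ii)--(iii).} The technical core is (vi). Given an approximate solution with $\|(i\partial_t+H_{m\pm1})\psi^\pm-N^\pm(\psi^\pm)\|_{L^{4/3}}\le\delta$ and bounded scattering norm $S_J(\psi^\pm)\le A$, I would partition $J$ into finitely many subintervals on each of which $\|\psi^\pm\|_{L^4_{t,r}}$ lies below the small-data threshold, run a short-time perturbation argument on each piece, and sum the errors by a discrete Gronwall/bootstrap; the uniformly continuous dependence (v) is then an immediate corollary. The blow-up/scattering dichotomy (ii)--(iii) is standard: if $\psi^\pm$ does not blow up forward, i.e.\ $S_{I,\ge t}\psi^\pm<\infty$, the Duhamel integral converges, giving $\psi_+^\pm$ with $M(\psi^\pm(t)-e^{itH_{m\pm1}}\psi_+^\pm)\to 0$ and extending the solution to $[0,\infty)$; conversely, for prescribed $\psi_+^\pm\in L^2$ one solves \eqref{psieq} backward from $t=+\infty$ by the small-data fixed point on a tail $[T,\infty)$ where $e^{itH_{m\pm1}}\psi_+^\pm$ is small in $L^4_{t,r}$, and then extends by (vi).

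\emph{Persistence of regularity (vii) and the main obstacle.} On an interval with $S_J(\psi^\pm)\le A$, I would propagate the $H^s$ norms of the two-dimensional extensions $R_{m\pm1}\psi^\pm$ by applying $H_{m\pm1}$ (and, for odd $s$, a derivative) to the equation and estimating the resulting forced linear problem with the refined Strichartz bounds of Lemma \ref{LB}, which control $\partial_r^2$, $\frac{\partial_r u}{r}$ and $\frac{u}{r^2}$ separately (consistently with Lemma \ref{LBE}); a Gronwall argument over the finitely many small-Strichartz subintervals then closes \eqref{addreg}, and Lipschitz dependence in $H^s$ follows by running the same estimate on differences. The main obstacle throughout is the very first step together with the regularity propagation: because the nonlinearity is nonlocal and carries the singular weights $1/r$ and $1/r^2$, the trilinear, difference, and higher-regularity estimates are not the textbook cubic-NLS bounds and must be closed using precisely the Hardy-type mapping properties \eqref{rdrm} and the weighted Strichartz refinements of Lemma \ref{LB}. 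Once these are secured, parts (iv)--(vi) and (ii)--(iii) follow the standard mass-critical template, and (vii) is a bootstrap built on top of them.
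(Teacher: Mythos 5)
Your proposal is correct and follows essentially the same route as the paper: parts i)--vi) are the standard mass-critical $L^4_{t,r}$ perturbative package built on the trilinear estimate (which the paper defers to the $\mu=1$ treatment in \cite{BIKT2}), and for vii) you correctly isolate the nonlocal term $[r\partial_r]^{-1}\Re(\bar\psi^+\psi^-)\psi^\pm$ with its singular weights as the one nonstandard ingredient, to be handled by differentiating the equation and combining the Hardy-type bounds \eqref{rdrm} with the refined Strichartz estimates of Lemma \ref{LB} -- exactly the content of the paper's estimates \eqref{regineq}. No gaps to report.
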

The above results are concerned with general solutions of
\eqref{psieq}. However, our interest lies in solutions which
correspond to geometric maps.  The next result completes the Cauchy
theory for solutions of \eqref{psieq} which satisfy the compatibility
condition \eqref{compnew}.  The system \eqref{psieq} does not directly
involve the variable $\psi_0$ which is defined in this context by
\eqref{psizero}.

\begin{theo} \label{CT-CC} i) If $\psi^\pm_0 \in L^2$ satisfying the
  compatibility condition \eqref{compnew}, then $\psi^\pm(t)$
  satisfies the compatibility condition \eqref{compnew} for each $t
  \in I$. If, in addition, $R_{m \pm 1} \psi^\pm_0 \in H^3$ then
  \eqref{compat} and \eqref{curb} are satisfied.
  
  ii) If the solution satisfies the compatibility condition
  \eqref{compnew} and it does not blow up in time then the two
  scattering states (described in ii)) are related by
  \begin{equation} \label{asscat}
    \partial_r r (\psi_+^+ - \psi_+^-) = -m(\psi_+^+ + \psi_+^-)
  \end{equation}  
  Conversely, if $\psi_+^\pm \in L^2$ satisfy \eqref{asscat}, then the
  unique maximal-lifespan solution $\psi^\pm$ which scatters to
  $e^{itH^\pm} \psi_+^\pm$ (constructed in part ii)) satisfy the
  compatibility condition \eqref{compnew}.  A similar statement holds
  true for the backward in time scattering.

  iii) If $\psi^\pm$ satisfy the compatibility conditions, then for
  every interval $J \subset I$ (I being the maximal-lifespan interval)
  the following holds true
  \begin{equation} \label{str} \| \psi^+ \|_{L^4(J)} \approx \| \psi^-
    \|_{L^4(J)}
  \end{equation}
  where the constants involved in the use $\approx$ are independent of
  the interval $J$.

\end{theo}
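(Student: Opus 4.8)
The plan is to organize the proof around the first-order ladder operators
\[
P^+ = \partial_r + \frac{m+1}{r}, \qquad P^- = \partial_r - \frac{m-1}{r},
\]
which satisfy the intertwining identities $P^+ H_{m+1} = H_m P^+$ and $P^- H_{m-1} = H_m P^-$ (each factors the corresponding Bessel operator and maps Hankel index $m\pm1$ to index $m$ while preserving $\xi$). A direct rearrangement shows that the compatibility condition \eqref{compnew} is equivalent to
\[
P^+\psi^+ - P^-\psi^- = -\frac{A_2-m}{r}(\psi^+ + \psi^-),
\]
while the asymptotic relation \eqref{asscat} is equivalent to $P^+\psi_+^+ = P^-\psi_+^-$. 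Note that $\tfrac{A_2-m}{r}\in L^2$ by \eqref{2est} and \eqref{strfix}, so every term makes sense already for $L^2$ data.

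For part (i) I first work with $R_{m\pm1}\psi^\pm_0\in H^3$, so that by Theorem~\ref{CT}(vii) and Section~\ref{sreg} all terms lie in $L^2$ and the manipulations are justified. Introduce the defect
\[
\mathcal D := P^+\psi^+ - P^-\psi^- + \frac{A_2-m}{r}(\psi^+ + \psi^-),
\]
so that \eqref{compnew} holds exactly when $\mathcal D=0$. Writing $h=P^+\psi^+-P^-\psi^-$ and using $\partial_t\psi^\pm = iH_{m\pm1}\psi^\pm - iN^\pm_m(\psi^\pm)$ together with the intertwining, one gets $\partial_t(e^{-itH_m}h)=-ie^{-itH_m}(P^+N^+_m-P^-N^-_m)$. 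The crux is to show, using the explicit form \eqref{psin} of the nonlinearities, the defining relations \eqref{A2}, \eqref{A0}, \eqref{comp1} and the conservation law $A_2^2-|\psi_2|^2=m^2$, that $\partial_t\mathcal D = iH_m\mathcal D + \mathcal B\,\mathcal D$ for a bounded lower-order operator $\mathcal B$ built from $A_0,A_2,\psi_2$; i.e. the constraint equation closes. Since $\mathcal D(t_0)=0$, an energy/Gronwall estimate for this linear homogeneous equation forces $\mathcal D\equiv0$ on $I$. For merely $L^2$ data I approximate by $H^3$ data: by Theorem~\ref{CT}(v) the solutions converge uniformly in $L^2$ on $J$, and \eqref{compnew}, a closed distributional relation whose quadratic term is $L^2$-continuous by \eqref{strfix} and \eqref{1estdif}, passes to the limit. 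Finally, when $R_{m\pm1}\psi^\pm_0\in H^3$ I reconstruct the frame $(u,v,w)$ and the map $u(t)$ from $\psi^-(t)$ via Proposition~\ref{recprop}; since $\psi^-$ solves its equation in \eqref{psieq} and the constraint holds, the reconstructed $u$ solves \eqref{SM}, whence \eqref{compat} and \eqref{curb} are the standard integrability identities for the differentiated fields of a smooth map, valid for all $l,k\in\{0,1,2\}$.

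For part (ii), forward direction, part (i) gives $h(t)=-\tfrac{A_2-m}{r}(\psi^++\psi^-)$ for all $t$. Conjugating by the free flow and intertwining, $e^{-itH_m}h(t)=P^+(e^{-itH_{m+1}}\psi^+(t))-P^-(e^{-itH_{m-1}}\psi^-(t))$, which converges, tested against any smooth compactly supported $\chi$ via $(P^\pm)^*\chi$, to $P^+\psi_+^+-P^-\psi_+^-$, because $e^{-itH_{m\pm1}}\psi^\pm(t)\to\psi_+^\pm$ in $L^2$. On the other hand, H\"older and \eqref{strfix} give $\|h(t)\|_{L^2}\lesssim_M\|\psi^-(t)\|_{L^4}^2$, and since the absence of blow-up yields $\psi^-\in L^4(I\times\R)$ with $\|\psi^-\|_{L^4([T,\infty)\times\R)}\to0$, we may select $t_n\to\infty$ with $\|h(t_n)\|_{L^2}\to0$. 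As $e^{-itH_m}$ is unitary the weak limit must then vanish, i.e. $P^+\psi_+^+=P^-\psi_+^-$, which is \eqref{asscat}. For the converse, if $\psi_+^\pm$ satisfy \eqref{asscat} then $e^{itH_{m\pm1}}\psi_+^\pm$ satisfy $P^+(\cdot)=P^-(\cdot)$ for all $t$ by intertwining; hence for the solution scattering to them the defect $\mathcal D(t)\to0$ as $t\to\infty$ (the correction decaying as above), and since $\mathcal D$ solves the homogeneous linear equation of part (i), uniqueness propagated inward from $t=\infty$ gives $\mathcal D\equiv0$, i.e. \eqref{compnew}. The backward statement is identical.

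Part (iii) is then immediate: at each fixed $t$, compatibility means $\psi^+(t)$ is exactly the reconstruction of Lemma~\ref{a2p2c} from $\psi^-(t)$, so \eqref{strfix} with $p=4$ gives $\|\psi^+(t)\|_{L^4}\le C(\|\psi^-(t)\|_{L^2})\|\psi^-(t)\|_{L^4}$; since $\|\psi^-(t)\|_{L^2}=\|\psi_0^-\|_{L^2}=:M$ is conserved, the constant is uniform in $t$, and raising to the fourth power and integrating over $J$ yields $\|\psi^+\|_{L^4(J)}\le C(M)\|\psi^-\|_{L^4(J)}$. The reverse inequality follows by running the same reconstruction from $\psi^+$ (with $\|\psi^+\|_{L^2}=M$), giving \eqref{str} with constants depending only on $M$ and hence independent of $J$. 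The main obstacle is the structural computation in part (i): verifying that $\partial_t\mathcal D$ is a linear homogeneous expression in $\mathcal D$, i.e. that the nonlinearity $N^\pm_m$ is consistent with the constraint \eqref{compnew}. This is precisely the algebraic reflection of the fact that \eqref{psieq} descends from a geometric Schr\"odinger map flow, and it relies on careful use of \eqref{comp1}, \eqref{A2}, \eqref{A0} and $A_2^2-|\psi_2|^2=m^2$. A secondary difficulty is the rigorous limit $t\to\infty$ in part (ii) with the unbounded operators $P^\pm$, handled above by pairing the weak convergence of $e^{-itH_m}h$ with the strong $L^2$ decay of $h$ along a sequence.
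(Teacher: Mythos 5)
Your proposal is correct and follows essentially the same route as the paper: your defect $\mathcal D=P^+\psi^+-P^-\psi^-+\frac{A_2-m}{r}(\psi^++\psi^-)$ is exactly (up to the factor $2/i$) the paper's quantity $\frac{F}{r}=-(\partial_r+\frac1r)\frac{\psi_2}{r}+\frac{iA_2}{r}\psi_1$, and your plan --- closed linear homogeneous evolution for the defect plus Gronwall, regularization for $L^2$ data, decay of the quadratic correction along a time sequence combined with conservation for the free flow in part (ii), and Lemma \ref{a2p2c}/\eqref{strfix} run in both directions for part (iii) --- mirrors the paper's argument via \eqref{Freq}, with your ladder-operator/intertwining phrasing replacing the paper's covariant $D_k$-calculus and your weak-limit argument being a mildly cleaner version of the paper's $\dot H^{-1}_e$-conservation step. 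The one substantive item you assert rather than verify is precisely the heart of part (i), namely that $\partial_t\mathcal D$ closes into $iH_m\mathcal D+\mathcal B\,\mathcal D$ (the paper's equation \eqref{Freq}); this identity is true and is the content of the constraint-propagation computation, so your architecture is sound, but a complete write-up would need to carry out that algebra.
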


As a consequence of these theorems we are able to prove the following
result
\begin{prop} \label{SMR} If $\psi^\pm_0 \in L^2$ satisfies the
  compatibility conditions \eqref{compnew}, $R_{m \pm 1} \psi^\pm_0 \in H^2$
  and $\psi^\pm(t)$ is the solution of \eqref{psieq} on $I$ then the
  map $u(t)$ constructed in Proposition \ref{recprop} (for each $t$)
  is a Schr\"odinger map.
\end{prop}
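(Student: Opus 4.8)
The plan is to identify the reconstructed family $u(t)$ with a genuine Schr\"odinger map produced by the classical local theory, and then upgrade this local identification to the whole lifespan $I$. First I would secure the regularity needed to invoke that theory. Since $R_{m\pm 1}\psi_0^\pm \in H^2$ and the Strichartz norm $S_J(\psi^\pm)$ is finite on every compact $J \subset I$, part vii) of Theorem \ref{CT} propagates this regularity, giving $R_{m\pm 1}\psi^\pm(t) \in H^2$ locally uniformly in $t$. By Proposition \ref{greg} this is exactly the statement that the reconstructed map satisfies $u(t) \in \dot H^1 \cap \dot H^3$, with bounds uniform on compact subintervals; recall also that by Theorem \ref{CT-CC} i) the compatibility relation \eqref{compnew} is preserved in time, so the reconstruction of Proposition \ref{recprop} is consistent at each $t$. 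In particular $u_0 := u(t_0)$ is an equivariant map in $\dot H^1 \cap \dot H^3$.

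Next I would run McGahagan's theorem. By Theorem \ref{clasic}, applied in both time directions, there is $T>0$ and a genuine Schr\"odinger map $\tilde u \in L^\infty([t_0-T,t_0+T]:\dot H^1 \cap \dot H^3)$ with $\tilde u(t_0)=u_0$; equivariance is preserved by uniqueness, since $e^{m\theta R}$ commutes with the flow of \eqref{SM}. For this honest solution the Coulomb gauge construction of Section \ref{seccoulomb} applies verbatim, producing differentiated fields $\tilde\psi^\pm$ which, precisely because $\tilde u$ solves \eqref{SM}, satisfy the reduced system \eqref{psieq} together with \eqref{compnew}. The key matching observation is that at $t=t_0$ the Coulomb-gauge fields of $\tilde u(t_0)=u_0$ coincide with $\psi_0^\pm$: indeed $u_0$ was produced by Proposition \ref{recprop} from $\psi_0^-$ together with its gauge frame, and that reconstruction is a bijection onto its gauge data, so $\tilde\psi^\pm(t_0)=\psi_0^\pm$ for both signs.

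Now both $\psi^\pm$ and $\tilde\psi^\pm$ are solutions of \eqref{psieq} with the same initial data $\psi_0^\pm$, so the uniqueness part of Theorem \ref{CT} forces $\psi^\pm=\tilde\psi^\pm$ on $[t_0-T,t_0+T]\cap I$. Since the map reconstructed from $\psi^-$ in Proposition \ref{recprop} is unique, $u(t)=\tilde u(t)$ there, and hence $u$ solves \eqref{SM} on a neighborhood of $t_0$. As $t_0$ was an arbitrary point of $I$ (the regularity bounds being locally uniform, so that the length $T$ can be taken uniform on compact subintervals), the same argument applies at every interior time; thus $\partial_t u = u \times_\mu \Delta u$ holds throughout $I$ and $u$ is a Schr\"odinger map on all of $I$.

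The hard part will be the two bookkeeping steps rather than any single estimate: verifying that McGahagan's solution stays equivariant so that the one-dimensional gauge machinery legitimately applies to $\tilde u$, and confirming that the Coulomb reduction of an honest $\dot H^1\cap\dot H^3$ Schr\"odinger map lands in the exact regularity class ($R_{m\pm 1}\tilde\psi^\pm \in H^2$) and produces the exact system \eqref{psieq}, so that the uniqueness statement of Theorem \ref{CT} is applicable to the pair $(\tilde\psi^+,\tilde\psi^-)$. Both facts are essentially contained in the derivation of Section \ref{seccoulomb} and the regularity discussion of subsection \ref{sreg}, but they must be checked carefully to close the loop between the PDE \eqref{SM} for $u$ and the reduced system for $\psi^\pm$.
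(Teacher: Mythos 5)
Your argument is correct and follows essentially the same route as the paper: reconstruct $u_0\in\dot H^1\cap\dot H^3$ via Propositions \ref{recprop} and \ref{greg}, invoke McGahagan's local theory to produce a genuine Schr\"odinger map, observe that its Coulomb-gauge fields solve \eqref{psieq} with data $\psi_0^\pm$, and conclude by the uniqueness in Theorem \ref{CT} together with the uniqueness of the reconstruction. Your version merely makes explicit the localization at an arbitrary $t_0\in I$, which the paper leaves implicit.
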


\begin{proof}[Proof of Theorem \ref{CT}]  
  Parts i)-vi) are standard. Our particular setup is very similar to the one 
  in the Theorem 3.2 in \cite{BIKT2}, and the proof there can be easily adapted 
  to our problem.
  
  As discussed in \cite{BIKT2}, part vii) is usually standard, with the exception
  of one term in it. We rewrite the nonlinear terms as follows
  \[
    A_0 \pm 2 \frac{A_2-m}{r^2} \mp \frac1{r}\Im{(\psi_2 \bar{\psi}^-)}
   = \frac{|\psi^-|^2}2 - [r\partial_r]^{-1} \Re(\bar \psi^+ \psi^-) \pm \frac1{2r^2} \int_0^r (|\psi^+|^2 - |\psi^-|^2)sds 
  \]
  Without the term $[r\partial_r]^{-1} \Re(\bar \psi^+ \psi^-)$, the
  analysis would be standard, see  \cite{BIKT2} for more commentaries.
   We will provide a full analysis of the term
  \[
  N_1^\pm = [r\partial_r]^{-1} \Re(\bar \psi^+ \psi^-) \psi^\pm
  \]
  This analysis can be extended to the other two terms in
  $N^\pm(\psi^\pm)$.

  The analysis in the case $m=1$ is similar to the one in \cite{BIKT2}.
  We now proceed with the cases $m \geq 2$.
  Since $S_I(\psi^\pm) \leq A$, the standard theory gives also that
  \[
  \| \psi^\pm \|_{L^3L^6(I \times \R)} \les_A 1
  \]
  Therefore it makes sense to define
  \[
  \begin{split}
    & B=\| \partial_r \psi^\pm \|_{L^3L^6} +   \| \frac{\psi^\pm}r \|_{L^3L^6}  \\
    &C = \| \partial_r^2 \psi^\pm \|_{L^3 L^6} + \| \frac1r \partial_r
    \psi^+ \|_{L^3 L^6}+  \| \frac{\psi^+}{r^2}  \|_{L^3 L^6} 
    +  \| \frac1r (\partial_r-\frac1r) \psi^- \|_{L^3L^6} \\
    & D = \| \partial_r H^\pm \psi^\pm \|_{L^3L^6} + \| \frac1r H^\pm
    \psi^\pm \|_{L^3L^6} 
  \end{split}
  \]
  We will prove the following estimates
  \begin{equation} \label{regineq}
    \begin{split}
      & \| \partial_r  N_1^\pm \|_{L^1L^2} + \| \frac1r N_1^\pm \|_{L^1 L^2}  \les_A B \\
      & \| H_{m \pm 1} N_1^\pm \|_{L^1 L^2} \les_{A}  C + B^2 \\
      & \| \partial_r H_{m \pm 1} N_1^\pm \|_{L^1L^2} + \| \frac1r H_{m \pm 1} N_1^-
      \|_{L^1 L^2} \les_A D + BC
    \end{split}
  \end{equation}
  Similar estimates hold true for the other two terms
  in $N^\pm(\psi^\pm)$. Based on these estimates, the Strichartz estimates \ref{STR} 
  and the result of Lemma \ref{LB}, a standard argument establishes the conclusion in
  \eqref{addreg}.

  We now turn to the proof of \eqref{regineq}. We compute
  \[
  \partial_r N_1^\pm = \partial_r \left( [r\partial_r]^{-1} \Re(\bar
    \psi^+ \psi^-) \right) \psi^\pm + [r\partial_r]^{-1} \Re(\bar
  \psi^+ \psi^-) \partial_r \psi^\pm
  \]
  and estimate
  \[
  \| \partial_r N_1^\pm \|_{L^1L^2} \les \| \psi^+ \|_{L^3L^6} \|
  \frac{ \psi^-}r \|_{L^3 L^6} \| \psi^\pm \|_{L^3L^6} + \| \psi^\pm
  \|^2_{L^3L^6} \| \partial_r \psi^\pm \|_{L^3L^6}
  \]
  from which half of the first estimate in \eqref{regineq} follows;
  the second half follows in a similar manner.

  We continue with
  \[
  \begin{split}
    H_{m \pm 1}N_1^\pm = & \Delta \left( [r\partial_r]^{-1} \Re(\bar \psi^+
      \psi^-) \right) \psi^\pm
    + 2 \partial_r \left( [r\partial_r]^{-1} \Re(\bar \psi^+ \psi^-) \right) \partial_r \psi^\pm \\
    & + \left( [r\partial_r]^{-1} \Re(\bar \psi^+ \psi^-) \right)
    H_{m \pm 1} \psi^\pm
  \end{split}
  \]
  The last term is estimated by $\les_A C$, the second one is
  estimated by $\les_A B^2$, while the first one equals
  \[
  (\partial_r + \frac1r) \frac{\Re(\bar \psi^+ \psi^-)}r \cdot
  \psi^\pm = \frac{\Re(\partial_r \bar \psi^+ \cdot \psi^-)+ \Re(\bar
    \psi^+ \cdot \partial_r \psi^-)}r \cdot \psi^\pm
  \]
  and its $L^1L^2$ norm is estimated by
  \[
  \les ( \| \partial_r \psi^+ \|_{L^3L^6} \| \frac{\psi^-}r
  \|_{L^3L^6}+ \| \frac{\psi^+}r \|_{L^3L^6} \| \partial_r \psi^-
  \|_{L^3L^6} ) \| \psi^\pm \|_{L^3L^6}
  \]
  from which the second estimate in \eqref{regineq} follows.

  For the third estimate we start with
  \[
  \begin{split}
    \partial_r H_{m \pm 1} N_1^\pm & = \partial_r \Delta \left(
      [r\partial_r]^{-1} \Re(\bar \psi^+ \psi^-) \right) \psi^\pm
    + \Delta \left( [r\partial_r]^{-1} \Re(\bar \psi^+ \psi^-) \right) \partial_r \psi^\pm \\
    & + 2 \partial_r^2 \left( [r\partial_r]^{-1} \Re(\bar \psi^+
      \psi^-) \right) \partial_r \psi^\pm
    + 2  \partial_r \left( [r\partial_r]^{-1} \Re(\bar \psi^+ \psi^-) \right) \partial_r^2 \psi^\pm \\
    & + \partial_r \left( [r\partial_r]^{-1} \Re(\bar \psi^+ \psi^-)
    \right) H_{m \pm 1} \psi^\pm + \left( [r\partial_r]^{-1} \Re(\bar \psi^+
      \psi^-) \right) \partial_r H_{m \pm 1} \psi^\pm
  \end{split}
  \]
  The $L^1L^2$ norm of the sixth terms above is bounded by $\les_A
  D$. Using the previous arguments, the $L^1L^2$ norm of the second,
  fourth and fifth term is bounded by $\les_A BC$.  Since
  \[
  \begin{split}
    \partial_r^2 \left( [r\partial_r]^{-1} \Re(\bar \psi^+ \psi^-)
    \right) = \frac{\Re(\partial_r \bar \psi^+ \cdot \psi^-)}r 
    + \Re(\frac{\bar \psi^+}r( \partial_r  -\frac1r)\psi^-)
  \end{split}
  \]
  it follows that the $L^1L^2$ norm of the third term above is bounded
  by $\les_A BC$.

  The first terms is further expanded 
  \[
  \begin{split}
    & \partial_r \Delta \left( [r\partial_r]^{-1} \Re(\bar \psi^+
      \psi^-) \right)
    = \partial_r \left( \frac{\Re(\partial_r \bar \psi^+ \cdot \psi^-)+ \Re(\bar \psi^+ \cdot \partial_r \psi^-)}r \right) \\
    & = \frac{\Re(\partial_r^2 \bar \psi^+ \cdot \psi^-)+ 2
      \Re(\partial_r \bar \psi^+ \cdot \partial_r \psi^-)}r + \Re(\frac{\bar \psi^+}r (\partial_r - \frac1r) \partial_r \psi^-)
  \end{split}
  \]
  and estimated by $BC$. The estimate for $\frac1r H_{m \pm 1} N_1^\pm$ is obtained along the same lines,
  though the argument is much easier. The details are left to the reader. This  finishes the argument for \eqref{regineq}.
\end{proof}

\begin{proof}[Proof of Theorem \ref{CT-CC}]
  i) The proof follows exactly the same steps as in \cite{BIKT2}, with the only adjustments coming
  from the value of $\mu=-1$ and that we work with a general $m$. 
    
  It is useful to rephrase this in terms of $\psi_1$, $\psi_2$,
  which are recovered linearly from $\psi^{\pm}$. Reverting the
  algebraic computation from Sections \ref{CG} and \ref{COG},
  $\psi_1$, $\psi_2$ solve the system \eqref{dtpsiab}. Then we seek to
  show that the relation $D_1 \psi_2 = D_2 \psi_1$ is preserved along
  the flow. For this we will derive an equation for the quantity
  \[
  F = D_2 \psi_1 - D_1 \psi_2
  \]
   Following the lines of the argument in \cite{BIKT2} we derive the following equation for $F$:
  \[
  i D_0 F = (\frac{A_2^2}{r^2} - \partial_1 (\partial_1+\frac{1}r))F +
  \Re (F \bar \psi_1) \psi_1 - \frac{1}{r^2} \Re ( F \bar
  \psi_2)\psi_2
  \]
  It is more convenient to recast this as an equation for
  \[
  \frac{F}{r} = - (\partial_r + \frac1r)\frac{\psi_2}{r} + \frac{i A_2}{r} \psi_1
  \]
  which is exactly the quantity in \eqref{compnew}. We obtain
  \begin{equation} \label{Freq} 
  (i \partial_t +H_m) \frac{F}{r} = 
  (A_0+\frac{A_2^2-m^2}{r^2}) F + \Re (\frac{F}{r} \bar
    \psi_1) \psi_1 - \frac{1}{r^2} \Re ( \frac{F}{r} \bar \psi_2)\psi_2
  \end{equation}
  In view of the $L^4$ Strichartz bounds for $\psi_1$ and $\psi_2$ and
  the derived $L^2$ bounds for $A_0$ and $\frac{A_2^2-m^2}{r^2}$,
  standard arguments show that this linear equation is well-posed in
  $L^2$. Hence the conclusion follows provided that $\frac{F}{r}$ has
  sufficient regularity. Indeed, we have
  \[
  \frac{F}r = \frac{i}2 \left( \partial_r \psi^+ + \frac{1+A_2}r
    \psi^+ - \partial_r \psi^- - \frac{1-A_2}r \psi^- \right)
  \]
  It is obvious that if $R_{m \pm 1} \psi^\pm \in H^1$ then $\frac{F}r \in
  L^2$.
  
  If $R_{m \pm 1} \psi^\pm \in H^2$ then by using the results in Lemma
  \ref{LBE} and Sobolev embeddings one easily shows that $\frac{F}r
  \in \dHe$.
  
  We will show in detail that if $R_{m \pm 1} \psi^\pm \in H^3$, then $ H_m \frac{F}r \in L^2$. Indeed,
  \[
  \begin{split}
    -2i H_m \frac{F}r &  = (\partial_r + \frac{1+A_2}{r})H_{m+1} \psi^+ + 2(m+1) \frac{A_2-m}{r^3} \psi^+ 
      + 2 \frac{m-A_2}{r^2} \partial_r \psi^+ \\
      & + \frac{(\partial_r-\frac1r) \partial_r A_2}r \psi^+ + 2 \frac{\partial_r A_2}{r} \partial_r \psi^+
      -(\partial_r + \frac{1-A_2}{r})H_{m-1} \psi^- \\
      & + 2 \frac{m-A_2}{r^2} \partial_r \psi^- + 2(m-1) \frac{m-A_2}{r^3} \psi^- 
      + \frac{(\partial_r-\frac1r) \partial_r A_2}r \psi^- + 2 \frac{\partial_r A_2}{r} \partial_r \psi^-
  \end{split}
  \]
The above expression is easily shown to belong to $L^2$ based on that $R_{m \pm 1} \psi^\pm \in H^3$, 
by using that $R_{m \pm 1} H_{m \pm 1} \psi^\pm \in H^1$, the Sobolev embeddings 
$\psi^\pm, \partial_r \psi^\pm \in L^6$  and \eqref{A2}.

  Hence we can conclude that $H_m \frac{F}r \in
  L^2$.  This allows us to run a standard energy argument by pairing
  the equation, with $\bar F$, to conclude that
  \[
  \partial_t \| \frac{F}r \|^2_{L^2} \les (\| \psi_1 \|^2_{L^\infty} +
  \| \frac{\psi_2}r \|^2_{L^\infty}) \| \frac{F}r \|^2_{L^2}
  \]
  which by using the Gronwall inequality and the fact that $F(0)=0$
  leads to $F(t)=0$ for all $t \in I$.
  
  In order to run the energy argument it suffices to have $\frac{F}r
  \in \dHe$ and use the pairing of $\dot H^{-1}_e$ and $\dHe$. This is
  useful in the proof of Proposition \ref{SMR} where we assume only
  $R_{m \pm 1} \psi^\pm \in H^2$.
  
  In the general case when $\psi^\pm_0 \in L^2$ only we regularize
  them as follows. We produce $R_{m - 1} \psi^-_{n,0} \in H^3$ so that $\|
  \psi^-_0 - \psi^-_{n,0} \|_{L^2} \leq \frac1n$.  By using Lemma
  \eqref{a2p2c}, and particularly part v), we obtain that the
  compatible pair $R_{m +1} \psi^+_{n,0} \in H^3$ and $\| \psi^+_0 -
  \psi^+_{n,0} \|_{L^2} \les \frac1n$. We also recast the
  compatibility condition to
  \[
  \psi^+-\psi^-=- [r\partial_r]^{-1} \left( \psi^+ - \psi^-
    +A_2(\psi^++\psi^-)\right)
  \]
  so that all terms involved belong to $L^2$.  Using the conservation
  of the compatibility condition for $\psi^\pm_n(t)$ under the flow
  \eqref{psieq} and part v) of the Theorem, we obtain the desired
  result.

  ii) The key observation is that the equation for $\psi_2$ in
  \eqref{comp1} becomes linear in the following sense:
  \begin{equation} \label{limcomp} \lim_{t \rightarrow \infty}
    \| \partial_r \psi_2 - i m \psi^- + m\frac{\psi_2}r \|_{L^2} =0
  \end{equation}
  under the hypothesis that $\lim_{t \rightarrow \infty} \| \psi^-(t)
  - e^{itH_{m-1}} \psi^-_+ \|_{L^2}=0$. This is easily shown to follow
  from the following estimate
  \begin{equation} \label{pd} \lim_{t \rightarrow \infty} \sup_{r \in
      (0,\infty)} | r^{-m} \int_0^r e^{itH_{m-1}} f(s) s^m ds| = 0
  \end{equation}
  which holds true for $f \in L^2$. The proof of \eqref{pd} is similar to the corresponding statement in the
  Appendix of \cite{BIKT2}. Based on this, it follows that $\lim_{t \rightarrow
    \infty} \| \psi_2(t) \|_{L^\infty} =0$, and that
  \[
  \lim_{t \rightarrow \infty} \|i (A_2-m) \psi^- - \frac{1}r (A_2-m)
  \psi_2\|_{L^2} =0
  \]
  which justifies \eqref{limcomp}.

  With the notation (essentially the linearized version of $\frac{F}r$ above)
  \[
  f(t) = \partial_r ( e^{itH_{m+1}} \psi_+^+ - e^{itH_{m-1}}\psi_+^-) 
  - 2 \frac{e^{itH_{m-1}}\psi_+^-}r,
  \]
  the scattering relation \eqref{asscat} can be rewritten as $\lim_{t
    \rightarrow \infty} \| f(t) \|_{\dot H_e^{-1}}=0$.  A direct
  computation gives that $f$ obeys the equation
  \[
  \begin{split}
    (i\partial_t + H_m) f = 0
  \end{split}
  \]
  Since $\lim_{t \rightarrow \infty} \| f(t) \|_{\dot H^{-1}_e} =0$ it
  follows from the conservation of the $\dot H^{-1}_e$ norm that
  $f(0)=0$ which is \eqref{asscat}. Alternatively, one could carry out
  this argument as we did in viii).

  Assume now that given $\psi^\pm_+$ satisfying \eqref{asscat} we
  construct (as in ii)) solutions $\psi^\pm(t)$ to \eqref{psieq} on
  some $[T,+\infty)$ which scatter forward to $e^{itH_{m \pm 1}}
  \psi^\pm_+$. Following the argument in part i) we construct $F$
  which satisfies \eqref{Freq}. Assuming additional regularity on the
  states $\psi^\pm_+$, $R_{m \pm 1} \psi^\pm_+ \in H^3$, we have by part
  vii) of Theorem \ref{CT} that $R_{ m \pm 1} \psi^\pm(t) \in H^3$, hence by
  the argument in i) $H_m \frac{F}r \in L^2$ and the
  right-hand side of \eqref{Freq} belongs to $L^2$. Then the Duhamel
  formula applies to \eqref{Freq} and in turn the Strichartz estimate
  \[
  \| \frac{F}r \|_{L^4([T,\infty) \times \R)} \les \| \psi^\pm
  \|_{L^4([T,\infty) \times \R)}^2 \| \frac{F}r \|_{L^4([T,\infty)
    \times \R)}
  \]
  where we have used that $\lim_{t \rightarrow \infty} \| \frac{F(t)}r
  \|_{L^2}=0$ (this follows as above because of \eqref{pd}). Next, by
  taking $T$ large enough, we obtain that $F(t) \equiv 0$ for $t \geq
  T$ and the conclusion follows by invoking part i).

  For general states $\psi^\pm_+ \in L^2$ satisfying \eqref{asscat} we
  proceed as above.  We approximate them by sequences $\psi^\pm_{n,+}$
  with $R_{ m \pm 1} \psi^\pm_{n,+} \in H^3$; this can be done by
  regularizing $R_{m -1} \psi^-_+$ first and then showing that the
  corresponding $R_{m + 1} \psi^+_+$ has the same regularity as we did in
  Lemma \ref{a2p2c} part v) - in fact this argument involves only the
  linear part of the argument there. Then we write \eqref{asscat} at
  the level of $L^2$
  \[
  \psi^+_+ - \psi^-_+=  - [r\partial_r]^{-1} ( (m+1)\psi^+_+ + (m-1) \psi^-_+ ) ,
  \]
  use the above argument and a limiting argument.

  iii) One side of \eqref{str} follows from the fixed time bound
  \eqref{strfix}. The other side is similar, and it consists and
  replicating the result of Lemma \ref{a2p2c} starting from $\psi^+$
  instead.

\end{proof}

\begin{proof}[Proof of Proposition \ref{SMR}]
  With the given $\psi_0^\pm$ we reconstruct $u_0 \in \dot H^1 \cap
  \dot H^3$ as in Proposition \ref{recprop}.  The additional
  regularity $R_{m \pm 1} \psi^\pm_0 \in H^2$ implies, by \eqref{rtr}, that
  $u_0 \in \dot H^1 \cap \dot H^3$.  For the classical Schr\"odinger
  Map $u(t)$ with data $u_0$ we construct its Coulomb gauge, its field
  components and write the system \eqref{psieq} whose initial data is
  $\psi_0^\pm$. Invoking the uniqueness part of Theorem \ref{CT}, it
  follows that $\psi^\pm(t)$ are the gauge representation of $\mathcal
  W^\pm(t)$, hence the reconstruction in Proposition \ref{recprop}
  gives the Schr\"odinger Map $u(t)$ for each $t$.

\end{proof}

We can now identify the critical threshold for global well-posedness
and scattering. For any $m \geq 0$, we define $A(m)$ by
\[
A(m):= sup \{ S_{I_{max}}(\psi^-): M(\psi^-) \leq m \ \mbox{where
  $\psi^\pm$ is a solution to \eqref{psieq} satisfying}
\eqref{compnew} \}
\]
where $\psi^\pm$ is assumed to be a solution of \eqref{psieq},
satisfying the compatibility condition \eqref{compnew} and $I_{max}$
is its maximal interval of existence.

Obviously $A$ is a monotone increasing functions, it is bounded for
small $m$ by part iv) and it is left-continuous by part v) of Theorem
\ref{CT}. Therefore there exists a critical mass $0 < m_0 \leq +
\infty$ such that $A(m)$ is finite for all $m < m_0$ and it is
infinite $m \geq m_0$. Also any solution $\psi$ with $M(\psi) < m_0$
is globally defined and scatters.

Note that from \eqref{str} and the fact that $M(\psi^+)=M(\psi^-)$ (due to the compatibility relation),
it follows that we could have used
$S_{I_{max}}(\psi^+), M(\psi^+)$ in the definition of $A(m)$ and arrive to the
same conclusion as above with the same critical mass $m_0$.

\section{Concentration compactness} \label{concomp} 
The main goal of this section is to prove that if the above critical mass $m_0$
is finite, then there exists a critical element $\psi^\pm$ with mass $m_0$
which blows up, see Theorem \ref{cc}. Moreover we can be more precise about the
behavior of "scale" of the critical element, see Theorem \ref{lambdacontrol}.
The information provided by the two results aforementioned will be crucial
in the next section where we rule out the possibility that $m_0$ is finite.

We start by exhibiting the symmetries of the system \eqref{psieq}.  The system is
invariant under the time reversal transformation $\psi^\pm(r,t)
\rightarrow \psi^\pm(r,-t)$.  This allows us to focus our attention on
positive times, i.e. $t \geq 0$. 
Next, the system is invariant under two other transformations:
scaling, $\psi^\lambda = \lambda^{-1} \psi(\lambda^{-1} r,
\lambda^{-2} t)$ with $\lambda \in \R$, and phase multiplication,
$\psi^\alpha(r,t) = e^{i \alpha} \psi(r,t)$ with $\alpha \in \R/{2\pi
  \Z}$.  The phase multiplication can be ignored as the group
generated is compact. This way we generate the first (non-compact)
group $G$ of transformations $g_\lambda$ defined by
\[
g_{\lambda} f(r) = \lambda^{-1} f(\lambda^{-1} r)
\]
From \eqref{rela}, \eqref{A2} and \eqref{A0}, the effect of the action
$g_\lambda$ on $\psi^\pm$ is translated in the action of
$g_{\lambda}^1$ on $\psi_2,A_2$ and $g_\lambda^2$ on $A_0$ where
\[
g^1_{\lambda} f(r) = f(\lambda^{-1} r), \qquad g^2_{\lambda} f(r) =
\lambda^{-2} f(\lambda^{-1} r)
\]
The action of $g$ is extended to space-time functions by
\[
T_{g_\lambda} f(r,t) = \lambda^{-1} f(\lambda^{-1} r, \lambda^{-2} t)
\]

The equations in \eqref{psieq} are also time translation invariant and
this suggests enlarging the group $G$ to $G-$ as follows. Given
$\lambda > 0$ and $t \in \R$, we define
\[
g^-_{\lambda, t} f = \lambda^{-1} [e^{itH_{m - 1}} f](\lambda^{-1} r)
\]
We denote by $G'$ the group generated by these transformations. Given
two sequences $g^{n}, \tilde{g}^n \in G^-, \forall n \in \N$, we say
that they are asymptotically orthogonal iff
\begin{equation} \label{assort} \frac{\lambda_n}{\tilde \lambda_n} +
  \frac{\tilde \lambda_n}{\lambda_n} + | t_n \lambda_n^2 - \tilde t_n
  \tilde \lambda_n^2|= \infty
\end{equation}

We are now ready to state the two main results of this section.

\begin{theo} \label{cc} Assume that the critical mass $m_0$ is finite. Then
  there exists a critical element, i.e. a maximal-lifespan solution
  $\psi^\pm$ to \eqref{psieq} and satisfying \eqref{compnew}, with
  mass $m_0$ which blows up forward in time. In addition this solution
  has the following compactness property: there exists a continuous
  function $\lambda(t): I_+=[0,T_+) \rightarrow \R_+$ such that the
  sets
  \[
  K^\pm:= \left\{ \frac1{\lambda(t)} \psi^\pm(\frac{r}{\lambda(t)},t),
    t \in I_+ \right\}
  \]
  are precompact in $L^2$.
\end{theo}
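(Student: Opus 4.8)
The plan is to use the standard concentration-compactness/rigidity machinery of Kenig--Merle, adapted to the $L^2$-critical setting of the system \eqref{psieq}, with the main work residing in a profile decomposition and a Palais--Smale-type compactness argument. First I would establish a \emph{linear profile decomposition}: given any bounded sequence $\phi_n^\pm \in L^2$ (with $\|\phi_n^\pm\|_{L^2}$ bounded), one writes $e^{itH_{m\pm1}}\phi_n^\pm$ as a sum of profiles transported along the group $G^-$, i.e. $\phi_n^\pm = \sum_{j=1}^J T_{g_n^j}\phi^{j,\pm} + w_n^{J,\pm}$, where the sequences $g_n^j$ are pairwise asymptotically orthogonal in the sense of \eqref{assort}, and the remainder $w_n^J$ has asymptotically vanishing Strichartz norm as $J\to\infty$. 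Such a decomposition is obtained in the usual way via the dual Strichartz/$L^4$ estimate \eqref{STR}, extracting bubbles of concentration in scale and time; the orthogonality \eqref{assort} is exactly the statement that distinct profiles do not interact in $L^4_{t,r}$. One must also check that the decomposition respects the compatibility structure \eqref{compnew}, so that each profile is itself a compatible pair — this follows because \eqref{compnew} is a closed linear relation preserved under the scaling and time-translation actions of $G^-$.

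Next I would prove a \emph{nonlinear profile decomposition with orthogonality of the mass}. For each profile one associates the maximal-lifespan nonlinear solution with that data (using Theorem \ref{CT}), and using the stability result (Theorem \ref{CT}, part vi)) together with the asymptotic orthogonality \eqref{assort}, one shows that if all nonlinear profiles had finite Strichartz norm then the original solution would too, contradicting blow-up at the critical mass. The Pythagorean decomposition $M(\phi_n^\pm)=\sum_j M(\phi^{j,\pm}) + M(w_n^{J,\pm}) + o(1)$ forces that at the critical mass $m_0$ exactly one profile carries all the mass and the remainder vanishes. This is the standard Palais--Smale argument: take a sequence of solutions $\psi_n^\pm$ with $M(\psi_n^-)\to m_0$ and $S_{I_n,\ge t_n}(\psi_n^-)\to\infty$, renormalize by $G^-$ so that each is split evenly in forward/backward Strichartz norm at a chosen time, apply the profile decomposition to the renormalized data, and conclude that (up to the group action) $\psi_n^\pm$ converges in $L^2$ to a single profile $\psi_\infty^\pm$. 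This limiting object is the critical element; its solution has mass exactly $m_0$ and must blow up (forward, after the time-reversal reduction), since otherwise it would scatter with finite Strichartz norm and subcriticality would propagate back to the $\psi_n$.

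Finally, the \emph{compactness property} follows by running the same Palais--Smale argument at a sequence of times $t_k\in I_+$ approaching the blow-up. For each $t_k$ the renormalized orbit element $\lambda(t_k)^{-1}\psi^\pm(\lambda(t_k)^{-1}r,t_k)$ is, by the single-profile conclusion, convergent along subsequences in $L^2$; since every subsequence has an $L^2$-convergent sub-subsequence, the set $K^\pm$ is precompact. The scale $\lambda(t)$ is defined as the concentration scale extracted from the profile decomposition at time $t$, and its continuity is a soft consequence of the continuity of the flow in $L^2$ (Theorem \ref{CT}, part v)) together with the uniqueness of the concentration scale up to bounded factors. Throughout, the identity $S_J(\psi^+)\approx S_J(\psi^-)$ from \eqref{str} guarantees that controlling $\psi^-$ controls $\psi^+$, so a single compactness statement transfers to both components.

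The main obstacle I expect is the linear profile decomposition itself in this Hankel/equivariant framework: the usual Euclidean decomposition relies on translation invariance in physical and frequency space, which here is replaced only by the scaling and time-translation symmetries encoded in $G^-$ (there is no spatial translation because of the equivariant radial reduction). One must therefore argue that the only relevant noncompactness in the Strichartz inequality \eqref{STR} for $H_{m\pm1}$ comes from scales and time-shifts, which requires a careful analysis of the $L^4$ defect measure adapted to the operators $H_{m\pm1}$ rather than the flat Laplacian. A secondary difficulty is ensuring the nonlinear interaction terms $N_1^\pm$, which are nonlocal through $[r\partial_r]^{-1}$, genuinely decouple across asymptotically orthogonal profiles; this uses the $L^p$-boundedness \eqref{rdrm} of $[r\partial_r]^{-1}$ and the pointwise bounds from Lemma \ref{a2p2c} to control cross terms, but must be checked with care because the nonlocal operator can, a priori, couple well-separated scales.
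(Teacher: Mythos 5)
Your proposal follows the same Kenig--Merle concentration-compactness route that the paper takes: a linear profile decomposition adapted to $H_{m\pm1}$ and the group $G^-$ (the paper's Proposition \ref{lprof}, obtained as the equivariant counterpart of the Tao--Visan--Zhang result), followed by a Palais--Smale compactness statement (Proposition \ref{pcc}) and the standard extraction of a minimal-mass blow-up solution with a precompact renormalized orbit. The paper itself only states these intermediate propositions and defers the details to \cite{BIKT2} and \cite{KeMe1}, so your sketch is essentially a faithful outline of the intended argument, including the correct identification of the main technical points (absence of spatial translations in the equivariant reduction, decoupling of the nonlocal nonlinearity, and the role of \eqref{str} in transferring control between $\psi^+$ and $\psi^-$).
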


\begin{rema} As a consequence of the compactness property it follows
  that there exists a function $C: \R^+ \rightarrow \R^+$ such that
  the above critical element satisfies
  \begin{equation} \label{psicon} \int_{r \geq C(\eta)
      \lambda(t)^{-1}} |\psi^\pm(t,r)|^2 rdr \leq \eta, \quad \forall
    t \in I_+.
  \end{equation}  

\end{rema}

One can construct critical elements whose function $\lambda(t)$ has
more explicit behavior.
\begin{theo} \label{lambdacontrol} Assume that the critical mass $m_0$ is finite. 
Then we can construct a critical element as in Theorem
  \ref{cc} such that one of the two scenarios holds true:

  i) $T_{+}=\infty$ and $\lambda(t) \geq c > 0, \forall t \geq 0$.

  ii) $T_+ < \infty$ and $\lim_{t \rightarrow T_+} \lambda(t)=\infty$.
\end{theo}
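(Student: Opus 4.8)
The plan is to start from a critical element $\psi^\pm$ with compactness modulus $\lambda(t)$ on $I_+=[0,T_+)$ provided by Theorem \ref{cc}, and to use the scaling and time--translation symmetries of \eqref{psieq} to force it into one of the two stated forms. The basic enabling tool is a \emph{uniform lifespan} statement extracted from the precompactness of $K^\pm$: since the normalized profiles $\lambda(t)^{-1}\psi^\pm(\cdot/\lambda(t),t)$ range over a precompact subset of $L^2$ of mass $m_0$, the small--data theory together with the stability statement in Theorem \ref{CT} (parts iv)--vi)) furnishes a time $T_0>0$ such that any solution issued from a datum in a fixed $L^2$--neighborhood of this set exists on $[0,T_0]$ with $S_{[0,T_0]}\les 1$. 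Rescaling, the solution started at time $t$, whose datum lives at frequency $\lambda(t)$, has finite Strichartz norm on $[t,\,t+cT_0\lambda(t)^{-2}]$. I would first dispose of the case $T_+<\infty$, which always yields scenario ii): since $\psi^\pm$ blows up forward we have $S_{[s,T_+)}(\psi^\pm)=\infty$ for $s$ near $T_+$, so comparing with the lifespan bound forces $t+cT_0\lambda(t)^{-2}\le T_+$, i.e.
\[
\lambda(t)\ges (T_+-t)^{-1/2},\qquad t\to T_+ ,
\]
whence $\lambda(t)\to\infty$.

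If instead $T_+=\infty$ and $\inf_{t\ge 0}\lambda(t)>0$, we are already in scenario i), after replacing $\lambda$ by a continuous lower representative if needed. The remaining, and genuinely delicate, case is $T_+=\infty$ together with $\inf_{t\ge 0}\lambda(t)=0$; here the plan is to \emph{reconstruct} a better critical element by zooming in at the minima of the scale. I would choose $t_n\to\infty$ with $\lambda(t_n)\to 0$ and $\lambda(t_n)=\min_{[0,t_n]}\lambda$, and set $\Psi_n^\pm(r,s)=\lambda(t_n)^{-1}\psi^\pm(\lambda(t_n)^{-1}r,\,t_n+\lambda(t_n)^{-2}s)$, which by the symmetries is again a solution of \eqref{psieq} satisfying \eqref{compnew}. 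Its modulus $\mu_n(s)=\lambda(t_n+\lambda(t_n)^{-2}s)/\lambda(t_n)$ satisfies $\mu_n(0)=1$ and, by the running--minimum choice, $\mu_n(s)\ge 1$ on $[-t_n\lambda(t_n)^2,0]$.

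Since $\Psi_n^\pm(0)$ lies in the precompact family and has mass $m_0$, a subsequence converges \emph{strongly} in $L^2$ to some $\Phi_0^\pm$ with $M(\Phi_0^\pm)=m_0>0$; let $\Psi^\pm$ be the maximal solution with this datum. Strong $L^2$ convergence (no mass is lost) together with the stability statement shows that $\Psi_n^\pm\to\Psi^\pm$ on compact time subintervals, that $\Psi^\pm$ is almost periodic with a scale $\tilde\lambda(s)\ges 1$ on the backward half of its lifespan, and---because rescaling preserves the Strichartz norm while $S_{[0,\infty)}(\psi^\pm)=\infty$---that $\Psi^\pm$ blows up in at least one time direction and is therefore itself a critical element of mass $m_0$.

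Finally I would split according to $\tau:=\lim_n t_n\lambda(t_n)^2\in(0,\infty]$. If $\tau=\infty$ then $\Psi^\pm$ is defined on all of $(-\infty,0]$ with $\tilde\lambda\ges 1$ there; a nonzero almost periodic solution cannot scatter, so its backward Strichartz norm is infinite, i.e.\ $\Psi^\pm$ blows up backward, and reversing time produces a forward--global critical element with scale bounded below on $[0,\infty)$, namely scenario i). If $\tau<\infty$ then $\Psi^\pm$ has finite backward lifespan, so it blows up backward at a finite time (this is the self--similar--type situation, where $\lambda\sim t^{-1/2}$); applying the bound of the $T_+<\infty$ step in the backward direction gives $\tilde\lambda\to\infty$ at that endpoint, and time reversal produces scenario ii). I expect the main obstacle to be precisely this last limiting step: one must verify that $\Psi^\pm$ is a \emph{bona fide} critical element of mass exactly $m_0$ (strong $L^2$ convergence, no mass leakage at the origin or at infinity, and persistence of \eqref{compnew} in the limit), identify the precise backward maximal time and the limiting behavior of $\tilde\lambda$, and thereby exclude the global ``low--to--high cascade'' with degenerating scale; this is where the running--minimum selection and the non--scattering of almost periodic solutions are used in an essential way.
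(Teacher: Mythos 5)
The paper does not actually write out a proof of Theorem \ref{lambdacontrol}: it defers to the counterpart in \cite{BIKT2}, which follows the Kenig--Merle/Tao--Visan--Zhang reduction, and your proposal is precisely that argument (uniform rescaled lifespan from precompactness; the bound $\lambda(t)\ges (T_+-t)^{-1/2}$ when $T_+<\infty$; rescaling at running minima of $\lambda$ and extraction of a strong $L^2$ limit when $T_+=\infty$ and $\inf\lambda=0$, then splitting on $\tau=\lim_n t_n\lambda(t_n)^2$). The architecture is the intended one and the case analysis is complete.

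Two steps are asserted rather than proved, and one of them is genuinely non-obvious. First, the exclusion of $\tau=0$ when you write $\tau\in(0,\infty]$ requires the standard observation that, by the uniform local theory, $\lambda$ varies by at most a bounded factor over time intervals of rescaled length $O(1)$; since $\lambda(t_n)\to 0$ while $\lambda(0)$ is fixed, this forces $t_n\ges \lambda(t_n)^{-2}$, hence $\tau\ges 1$. Second, and more seriously, in the case $\tau<\infty$ you claim that $\Psi^\pm$ has finite backward lifespan, but the original critical element is only known to blow up \emph{forward} (its maximal interval is open and may well extend to the left of $t=0$), so nothing a priori prevents the limit from extending backward past $-\tau$. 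The way to close this is by contradiction through the forward blow-up: if $\Psi^\pm$ existed on $[-\tau-\delta,0]$ with finite Strichartz norm, the stability statement (Theorem \ref{CT}, part vi)) would give, for large $n$, a uniform bound on $S_{[-\tau-\delta,0]}(\Psi_n^\pm)=S_{[t_n-(\tau+\delta)\lambda(t_n)^{-2},\,t_n]}(\psi^\pm)$ (the $L^4_{t,x}$ norm is scale invariant); since $t_n\lambda(t_n)^2\to\tau$, this interval eventually contains $[0,t_n]$, so $S_{[0,t_n]}(\psi^\pm)$ would be bounded uniformly in $n$, contradicting $S_{[0,\infty)}(\psi^\pm)=\infty$. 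The same uniform Strichartz bounds on windows of bounded rescaled length (obtained by partitioning into $O(T)$ subintervals each handled by the local theory) are also what let you pass existence of $\Psi^\pm$ to all of $(-\infty,0]$ in the case $\tau=\infty$, a point you use implicitly. With these two points supplied, your proof is complete and coincides with the one the paper cites.
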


The proofs of the Theorems \ref{cc} and \ref{lambdacontrol} follow the same steps 
as their counterparts in \cite{BIKT2}, which in turn were inspired by the seminal work
of Kenig and Merle, see \cite{KeMe1}. We will not reproduce the proofs here due to their 
lengthy repetitive argument. Instead we state the intermediate Propositions which then lead 
to the proof of Theorem \ref{cc}. 

It is standard, see for instance \cite{KeMe1} and \cite{tvz} that the
result in Theorem \ref{cc} follows from the following
\begin{prop} \label{pcc} Assume $m_0 < +\infty$. Let $\psi_n^\pm:
  I_{n+}=[0,T_{n+}) \times \R \rightarrow \C, n \in \N$ be a sequence
  of solutions to \eqref{psieq}, satisfying \eqref{compnew} and such
  that $\lim_{n \rightarrow \infty} M(\psi^\pm_n)=m_0$ and $\lim_{n
    \rightarrow \infty} S_{I_{n+}}(\psi_n^\pm) = \infty$.  Then there
  are group elements $g_n \in G$ such that the sequence $g_n
  \psi_n^\pm(t_n)$ has a subsequence which converges in $L^2$.
\end{prop}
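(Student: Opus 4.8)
The plan is to prove this Palais--Smale-type statement by the Kenig--Merle concentration--compactness scheme: a linear profile decomposition for the propagators $e^{itH_{m\pm1}}$, the passage to nonlinear profiles, and the mass dichotomy encoded in the definition of $m_0$. Since for compatible solutions \eqref{str} gives $S_J(\psi^+)\approx S_J(\psi^-)$ uniformly in $J$, the two components blow up together, so I would carry the whole argument for $\psi^-_n$ alone and recover the $\psi^+$ data through the single-variable reconstruction of Lemma \ref{a2p2c}; this is precisely the point of keeping that elliptic reduction.

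First I would choose times $t_n\in I_{n+}$ and apply the linear profile decomposition for the free flow $e^{itH_{m-1}}$ to the bounded $L^2$ sequence $\psi^-_n(t_n)$. This yields fixed profiles $\phi^{j,-}\in L^2$, group elements $g_n^j=g^-_{\lambda_n^j,s_n^j}\in G'$ that are pairwise asymptotically orthogonal in the sense of \eqref{assort}, and remainders $w_n^{J,-}$ whose free Strichartz norm tends to $0$ as $J\to\infty$, together with the orthogonal mass decoupling $M(\psi^-_n(t_n))=\sum_{j\le J}M(\phi^{j,-})+M(w_n^{J,-})+o(1)$. Because the total mass converges to $m_0$, every profile obeys $M(\phi^{j,-})\le m_0$ and $\sum_j M(\phi^{j,-})\le m_0$.

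To each profile I would attach a compatible nonlinear profile: reconstruct the partner $\phi^{j,+}$ from $\phi^{j,-}$ via Lemma \ref{a2p2c}, so that $(\phi^{j,+},\phi^{j,-})$ satisfies \eqref{compnew}, and then let $v^{j,\pm}$ be the maximal compatible solution of \eqref{psieq} that agrees with $e^{itH_{m\pm1}}\phi^{j,\pm}$ either at a finite time or asymptotically, according to whether $s_n^j(\lambda_n^j)^2$ remains bounded or diverges; the asymptotic case is the wave-operator construction of Theorem \ref{CT}(ii). Now comes the dichotomy. If more than one profile were nontrivial, or if a single profile carried mass strictly below $m_0$, then every $M(\phi^{j,-})<m_0$, so by the definition of the critical mass each $v^{j,-}$ is global with $S_{\R}(v^{j,-})\le A(M(\phi^{j,-}))<\infty$. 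Assembling $\sum_j T_{g_n^j}v^{j,-}$ as an approximate solution of \eqref{psieq} and invoking the stability theory of Theorem \ref{CT}(vi) would then bound $S_{I_{n+}}(\psi^-_n)$ uniformly in $n$, contradicting $S_{I_{n+}}(\psi^-_n)\to\infty$. Hence exactly one profile survives, it carries the full mass $m_0$, and the decoupling forces $M(w_n^{J,-})\to0$, i.e. the remainder vanishes in $L^2$. A further contradiction with forward blow-up shows that the surviving parameter $s_n^1(\lambda_n^1)^2$ stays bounded, since otherwise the nonlinear profile would scatter and make $S_{I_{n+}}(\psi^-_n)$ finite; passing to a subsequence it converges and is absorbed into the flow, so that, taking $g_n$ to be the scaling part $g_{\lambda_n^1}$, the sequence $g_n\psi^-_n(t_n)$ converges in $L^2$, and the $\psi^+$ component is controlled through \eqref{str} and Lemma \ref{a2p2c}.

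The main obstacle is the nonlinear superposition step. In contrast with a local power nonlinearity, the nonlinearities $N^\pm$ in \eqref{psin} are nonlocal: they involve $A_0$, whose dangerous part is $[r\partial_r]^{-1}\Re(\bar\psi^+\psi^-)$, and $A_2$, built from $\int_0^r(|\psi^+|^2-|\psi^-|^2)\,s\,ds$, and they couple $\psi^+$ with $\psi^-$. The heart of the matter is to show that the cross-interactions among profiles with asymptotically orthogonal scaling and time parameters are negligible and that $\sum_j T_{g_n^j}v^{j,-}$ genuinely approximates the evolution of $\psi^-_n(t_n)$ in the $L^{4/3}$ sense demanded by Theorem \ref{CT}(vi). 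This requires orthogonality estimates for the nonlocal operators $[r\partial_r]^{-1}$ and the integral defining $A_2$ under the rescalings; here the scaling covariance noted earlier, namely that $g_\lambda$ acts as $g^1_\lambda$ on $\psi_2,A_2$ and as $g^2_\lambda$ on $A_0$, is exactly what makes the decoupling succeed, while the coupling between $\psi^\pm$ is absorbed by the single-variable reconstruction. These estimates are lengthy but run parallel to those in \cite{BIKT2}.
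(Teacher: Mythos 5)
Your proposal follows essentially the same route as the paper, which itself only sketches the argument: the linear profile decomposition of Proposition \ref{lprof}, compatible nonlinear profiles built from the single variable $\psi^-$ via Lemma \ref{a2p2c}, the stability theory of Theorem \ref{CT}(vi), and the mass dichotomy encoded in the definition of $m_0$, with the nonlocal cross-interaction estimates deferred to \cite{BIKT2}. You correctly single out the orthogonality estimates for $[r\partial_r]^{-1}$ and the integral defining $A_2$ under asymptotically orthogonal rescalings as the main technical burden, which is precisely the point at which the paper refers the reader to \cite{BIKT2}.
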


One of the main ingredients in the proof of Proposition \ref{pcc} is
the classical linear profile decomposition result.  These type of
results originate in the work of Bahouri and Gerard \cite{BG}, for the case of 
nonlinear wave equation and independently,  in the work of Merle and
Vega \cite{mv}, for the case of the nonlinear Schr\"odinger equation. For the case of
nonlinear Schr\"odinger equations see also \cite{bv}, \cite{Ker}, \cite{tvz}.

\begin{prop} \label{lprof} Let $\psi_0^n, n\in \N$ be a bounded sequence
  in $L^2$. Then (after passing to a subsequence if necessary) there
  exists a sequence $\phi^j, j \in \N$ of functions in $L^2$ and
  $g^{n,j} \in G^{-}, n,j \in \N$ such that we have the decomposition
  \begin{equation}
    \psi_0^n = \sum_{j=1}^l g^{n,j} \phi^j + w^{n,l}, \qquad \forall l \in \N
  \end{equation}
  where $w^{n,l}$ satisfies
  \begin{equation} \label{strvanish} \lim_{l \rightarrow \infty}
    \lim_{n \rightarrow \infty} S(e^{itH_{m-1}} w^{n,l}) = 0
  \end{equation}
  Moreover $g^{n,j}$ and $g^{n,j'}$ are asymptotically orthogonal for
  any $j \ne j'$ and we have the following orthogonality condition
  \begin{equation} \label{weak}
    \begin{split}
      weak \lim_{n \rightarrow \infty} (g^{n,j})^{-1} w^{n,l} =0,
      \quad \forall 1 \leq j \leq l
    \end{split}
  \end{equation}
  As a consequence the mass decoupling property holds
  \begin{equation} \label{massdec} \lim_{n \rightarrow \infty} (M(u^n)
    - \sum_{j=1}^l M(\phi^j)-M(w^{n,l}))=0
  \end{equation}
\end{prop}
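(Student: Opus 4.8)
The plan is to follow the by-now standard scheme of Bahouri--Gerard and Merle--Vega (see also \cite{Ker}, \cite{tvz}), reducing the statement to a single \emph{profile extraction lemma} which is then applied iteratively. I would work intrinsically in the Hankel picture for angular momentum $m-1$: on the Hankel side the free evolution $e^{itH_{m-1}}$ is the multiplier $e^{-it\xi^2}$ acting on $\mathcal F_{m-1}$, the baseline Strichartz bounds are those of \eqref{STR}--\eqref{STRO}, and the relevant scale/time symmetries are exactly those generating $G^{-}$. The crucial point is that in this fixed-angular-momentum setting there is no translation symmetry, so the parameters produced by the extraction can only be a scale $\lambda_n$ and a time $t_n$, matching the definition of $G^-$.

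First I would establish the extraction lemma: if $\{\psi_0^n\}$ is bounded in $L^2$ and $\liminf_n S(e^{itH_{m-1}}\psi_0^n)>0$, then there exist $\lambda_n>0$, $t_n\in\R$ and a nonzero $\phi\in L^2$ such that, along a subsequence,
\[
(g^-_{\lambda_n,t_n})^{-1}\psi_0^n \rightharpoonup \phi \quad \text{weakly in } L^2, \qquad \|\phi\|_{L^2}\ges \epsilon^{\beta}\big(\textstyle\sup_n\|\psi_0^n\|_{L^2}\big)^{1-\beta}
\]
for some $\beta>0$, where $\epsilon$ is the lower bound on the Strichartz norm. The analytic input is a refined Strichartz inequality controlling $S(e^{itH_{m-1}}f)^{1/4}$ by $\|f\|_{L^2}^{1-\theta}$ times a weaker, scaling-critical Besov-type quantity built from a Hankel frequency (Littlewood--Paley) decomposition; this follows from the dispersive bound $\|e^{itH_{m-1}}\|_{L^1(rdr)\to L^\infty}\les |t|^{-1}$ exactly as in the flat case. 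Non-vanishing of the weaker norm pins down a frequency scale $\lambda_n^{-1}$ and, after evolving by a suitable $t_n$, yields the weak limit $\phi$.

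With the lemma in hand I would iterate in the usual way. Set $w^{n,0}=\psi_0^n$; having produced $\phi^1,\dots,\phi^{l}$ and $g^{n,j}$, define $w^{n,l}=\psi_0^n-\sum_{j\le l}g^{n,j}\phi^j$ and apply the lemma to $w^{n,l}$ whenever its linear Strichartz norm is bounded below. The defining weak-convergence relation \eqref{weak} is enforced at each step and gives the Pythagorean splitting
\[
\|w^{n,l}\|_{L^2}^2=\|w^{n,l-1}\|_{L^2}^2-\|\phi^l\|_{L^2}^2+o_n(1),
\]
whence the mass decoupling \eqref{massdec} and $\sum_j\|\phi^j\|_{L^2}^2<\infty$. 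Thus $\|\phi^l\|_{L^2}\to 0$, and the lower bound in the extraction lemma forces $S(e^{itH_{m-1}}w^{n,l})\to 0$ as $l\to\infty$, which is \eqref{strvanish}. Asymptotic orthogonality \eqref{assort} of $g^{n,j}$ and $g^{n,j'}$ follows from \eqref{weak}: if \eqref{assort} failed along a subsequence for some pair, the $j'$-th profile would contribute a nonzero weak limit to $(g^{n,j})^{-1}w^{n,j}$, contradicting the construction.

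The hard part will be the refined Strichartz estimate for $H_{m-1}$ together with the accompanying concentration analysis. In particular, the key structural point to verify is that the extracted bubbles are genuinely concentrated at the scale-origin, so that $G^-$ --- which carries no translation parameter --- is indeed the correct symmetry group; this rests on the specific dispersive behaviour of $e^{itH_{m-1}}$ and on the fact that off-origin (thin annular) profiles disperse like one-dimensional objects and therefore carry asymptotically negligible two-dimensional $L^4_{t,r}$ Strichartz norm. Once these ingredients are in place, the remaining bookkeeping is identical to that of \cite{BIKT2} and the references \cite{BG}, \cite{mv}, \cite{tvz}.
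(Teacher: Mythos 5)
Your outline is essentially sound, but it takes a genuinely different route from the paper: the paper gives no self-contained proof at all, instead deducing Proposition \ref{lprof} as the ``equivariant counterpart'' of Theorem 7.3 in \cite{tvz} by the reduction explained in \cite{BIKT2} --- that is, one applies the full two-dimensional mass-critical profile decomposition (with its complete symmetry group of scalings, space translations, Galilean boosts and time translations) to the two-dimensional extensions $R_{m-1}\psi_0^n$, and then uses equivariance to show that the translation and frequency-modulation parameters of each profile can be discarded, leaving only the scale/time group $G^-$. You instead propose to rebuild the Bahouri--Gerard machinery from scratch inside the fixed-angular-momentum Hankel calculus. That is a legitimate and arguably more intrinsic strategy (it is essentially what \cite{ktv} do in the radial case), but it makes you responsible for exactly the two ingredients that the citation-based route gets for free. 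First, the refined (inverse) Strichartz inequality for $e^{itH_{m-1}}$ does \emph{not} follow from the dispersive bound $\|e^{itH_{m-1}}\|_{L^1\to L^\infty}\les |t|^{-1}$ ``exactly as in the flat case'': even for the annular Besov refinement $\|e^{it\Delta}f\|_{L^4_{t,x}}^4\les \|f\|_{L^2}^2\sup_N\|P_Nf\|_{L^2}^2$ one needs a bilinear Strichartz estimate for separated dyadic frequencies, and here one must set up the corresponding Littlewood--Paley and bilinear theory for the Hankel transform $\mathcal F_{m-1}$. Second, the absence of translation parameters in $G^-$ --- your ``key structural point'' that bubbles concentrate at the scale-origin and that thin off-origin annular pieces carry negligible $L^4_{t,x}$ norm --- is precisely the content of the equivariant reduction in \cite{BIKT2}, so in your scheme it must be proved rather than merely asserted. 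With those two lemmas supplied, your iteration, the Pythagorean mass decoupling \eqref{massdec}, the vanishing \eqref{strvanish} via $\sum_j\|\phi^j\|_{L^2}^2<\infty$, and the derivation of asymptotic orthogonality \eqref{assort} from \eqref{weak} are all standard and correct. In short: correct architecture, honest about where the work lies, but the ``hard part'' you defer is the entire analytic content of the proposition, and the paper's own (cited) argument bypasses it by leaning on the non-equivariant result.
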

A similar statement holds true also for the operator $H_{m +1}$.  We explained in
\cite{BIKT2} how this result follows as an equivariant counterpart of the result in Theorem 7.3
in \cite{tvz}. 

Based on Proposition \ref{lprof} and the results in the previous sections, one proves Proposition
\ref{pcc} by following the same steps as in \cite{BIKT2}. The details are left as an exercise.

\section{Momentum and localized momentum.} \label{MOM}

In this section we rule out the possible scenarios exhibited in
Theorem \ref{lambdacontrol}.  With the language used in Section
\ref{concomp}, we claim the following
\begin{theo} \label{lack} Critical elements do not exist.
\end{theo}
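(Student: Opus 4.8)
The plan is to assume that a critical element exists and to reach a contradiction in each of the two scenarios produced by Theorem~\ref{lambdacontrol}, by running a localized virial (``localized momentum'') monotonicity formula. The structural fact that makes everything work, and which is special to $\mu=-1$, is that the conservation law \eqref{cons} forces $A_2\ge m$, so that $A_2-m=|\psi_2|^2/(A_2+m)\ge 0$; consequently the potential contributions $\mp 2(A_2-m)/r^2$ in \eqref{psieq}, together with the nonlocal contribution coming from $[r\partial_r]^{-1}$ in \eqref{A0}, all carry a favorable (repulsive) sign. This is exactly what renders the field energy coercive \emph{with no smallness assumption on the mass}, in contrast with the $\S^2$ situation in \cite{BIKT2}.

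First I would reduce both scenarios to a single one by passing to renormalized variables adapted to the concentration scale $\lambda(t)$ of Theorem~\ref{cc}: set $\Phi^\pm(\tau,y)=\lambda(t)^{-1}\psi^{\pm}(t,\lambda(t)^{-1}y)$ and reparametrize time by $d\tau=\lambda(t)^2\,dt$. In scenario~(i) the bound $\lambda\ge c$ and $T_+=\infty$ give the $\tau$-horizon $\int_0^\infty\lambda^2\,dt=\infty$, while in scenario~(ii) the blow-up $S_{[0,T_+)}(\psi^\pm)=\infty$ together with the scaling identity $\|\psi^\pm(t)\|_{L^4}^4\approx\lambda(t)^2$ again forces $\int_0^{T_+}\lambda^2\,dt=\infty$. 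Thus in both cases the renormalized solution has a precompact orbit $K^\pm$ in $L^2$ (Theorem~\ref{cc}) and lives on an infinite renormalized time interval.

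Next I would introduce, for a smooth weight $a_R$ with $a_R(y)=y^2$ for $y\le R$ and $a_R$ bounded (flattening out) for $y\ges R$, the truncated second moment $I_R(\tau)=\int_0^\infty a_R(y)(|\Phi^+|^2+|\Phi^-|^2)\,y\,dy$, whose first $\tau$-derivative is the localized momentum $\Im\int a_R'(y)(\bar\Phi^+\partial_y\Phi^++\bar\Phi^-\partial_y\Phi^-)\,y\,dy$. A direct computation from \eqref{psieq}, using \eqref{A2} and \eqref{A0} for the nonlocal gauge terms, yields a virial identity of the schematic form $\ddot I_R = 8\,e_R+\mathrm{Err}_R-\mathrm{Drift}_R$, where $e_R$ is the field energy density on $\{y\le R\}$, which by the favorable signs above is nonnegative and, on the precompact orbit $K^\pm$ (which avoids $0$ and is normalized to unit scale), is bounded below by a positive constant $\delta_0$ uniform in $\tau$; the term $\mathrm{Err}_R$ gathers the tail contributions from $y\ges R$ and $\mathrm{Drift}_R$ the contribution of the moving frame $\tfrac{\dot\lambda}{\lambda}\,(y\partial_y)$. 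Granting that these last two terms are negligible for $R$ large, one has $\ddot I_R\ge\tfrac12\delta_0>0$ for all $\tau$, so $I_R$ is convex and $I_R(\tau)\to+\infty$; but mass conservation gives $I_R(\tau)\le\|a_R\|_{L^\infty}\,m_0\les R^2 m_0$, a contradiction. Hence no critical element can exist, which is the assertion of Theorem~\ref{lack} and forces the critical mass $m_0$ of Section~\ref{CTH} to be infinite.

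The part I expect to be the genuine obstacle is not the monotonicity mechanism but the rigorous control of the two error terms at the merely $L^2$ level of regularity available for the critical element. The nonlocal quantities $A_0$ and $A_2-m$ are defined through the operators $[r\partial_r]^{-1}$ in \eqref{A0}, \eqref{A2}, so justifying the virial computation and absorbing $\mathrm{Err}_R$ requires the mapping properties \eqref{rdrm}, the pointwise bounds on $\psi_2/(A_2+m)$ and $A_2$ from Lemma~\ref{a2p2c}, and the concentration estimate \eqref{psicon} to trade spatial localization for smallness. The delicate point is that $L^2$-precompactness of $K^\pm$ controls only the mass tail and not $\int_{y\ges R}|\partial_y\Phi^\pm|^2$; bounding this derivative tail, and the drift term $\mathrm{Drift}_R$ which requires quantitative control of $\dot\lambda/\lambda$, is where the bulk of the work lies, and is the reason the argument has to be carried out carefully rather than by the formal computation above.
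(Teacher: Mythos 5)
Your core mechanism for scenario (i) is essentially the paper's: a localized virial/momentum identity in which the quadratic terms carry a favorable sign, traceable to $A_2\ge m$ (equivalently $A_2=\sqrt{m^2+|\psi_2|^2}$) in the $\mu=-1$ case, compared against a boundary term of size $R^2m_0$ over a long time interval; the paper implements this with the geometric momentum $M_1=\Re(\psi_1\bar\psi_2)/(A_2+m)$ and the identity \eqref{vir3}, where positivity is read off from \eqref{Gpos} and the tails are absorbed using \eqref{psicon}, \eqref{Rconc} and \eqref{A0c}. However, your plan has two genuine gaps. First, the passage to renormalized variables $\Phi^\pm(\tau,y)=\lambda(t)^{-1}\psi^\pm(t,\lambda(t)^{-1}y)$, $d\tau=\lambda^2dt$, inserts a drift $\tfrac{\dot\lambda}{\lambda}(y\partial_y+1)\Phi^\pm$ into the equation, and there is no control on $\dot\lambda/\lambda$: the function $\lambda(t)$ produced by Theorem \ref{cc} is merely continuous, and even after smoothing, nothing in the compactness argument bounds its logarithmic derivative. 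This is not a technicality to be ``carried out carefully'' but an obstruction to the whole renormalized formulation; the paper avoids it entirely by working in the original variables, where in scenario (i) the bound $\lambda(t)\ge c$ already confines the concentration to a fixed compact set $r\lesssim C(\eta)c^{-1}\eta^{-1}$, so no rescaling is needed.

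Second, your reduction of scenario (ii) to an infinite renormalized horizon rests on the claim $\|\psi^\pm(t)\|_{L^4}^4\approx\lambda(t)^2$, which does not follow from $L^2$-precompactness of $K^\pm$: elements of an $L^2$-compact set need not lie in $L^4$ at all, so neither the upper nor the lower bound is available, and $\lambda(t)\to\infty$ on a finite interval does not by itself force $\int_0^{T_+}\lambda^2\,dt=\infty$. The paper handles scenario (ii) by a different and much softer argument: using \eqref{loc2est} and the concentration at scale $\lambda(t)^{-1}\to 0$ one shows $\lim_{t\to T_+}\int\phi(r/R)(A_2-m)\,r\,dr=0$ (see \eqref{limitT}), then integrates the first-order identity \eqref{vir1} backwards from $T_+$ to conclude $\int(A_2(t)-m)\,r\,dr=0$ at each fixed $t$; since $A_2\ge m$ this forces $A_2\equiv m$, hence $\psi_2\equiv0$, $\psi_1\equiv0$ and $\psi^\pm\equiv0$, contradicting blow-up. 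You should replace the unified convexity scheme by these two separate arguments, and note also that the paper's identity \eqref{vir3} involves only one time derivative paired against $\psi_0\in\dot H^{-1}_e$, which is what makes the computation justifiable by regularization at the bare $L^2$ level, whereas a genuine $\ddot I_R$ computation would be harder to legitimize for the critical element.
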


This will be based on virial type identities. Virial identities for
the Schr\"odinger Map problem originate in the work of Grillakis and
Stefanopoulos via a Lagrangian approach, see \cite{GM}. In their work
the formulation of these identities is at the level of the conformal
coordinate, obtained by using the stereographic projection. Our
approach is different in the sense that we derive the virial
identities at the level of the gauge components. However our results
can be derived from \cite{GM}.

\subsection{Virial type identities.}
This section is concerned with identities involving solutions of
\eqref{psieq} which satisfy the compatibility condition
\eqref{compnew}.

Given $a: \R_+ \rightarrow R$ a smooth function, i.e.
$|(r\partial_r)^\alpha a| \les_\alpha 1$, and which decays at infinity
we claim that
\begin{equation} \label{vir1} \frac{d}{dt} \int a(r) (A_2-m) r dr = 
  \int r \partial_r a(r) \Re( \psi_1 \frac{\bar{\psi}_2}r) rdr
\end{equation}

By using part i) of Theorem \ref{CT-CC}, the proof of \eqref{vir1}
goes as follows
\[
\begin{split}
  \frac{d}{dt} \int a(r) (A_2-m) r dr & = \int a(r) \partial_t A_2 rdr
  = - \int a(r) \Im{(\psi_0 \bar{\psi}_2)}  r dr \\
  & = - \int a(r)  \Im(i(\partial_r \psi_1 + \frac{1}r \psi_1 + \frac{iA_2}{r^2} \psi_2) \bar{\psi}_2) rdr \\
  & = - \int a(r)  \left( \Im(i \partial_r ( r \psi_1 \bar{\psi}_2)) - \Im(i r \psi_1 \partial_r \bar{\psi}_2) \right) dr \\
  & = \int \partial_r a(r) \Im(i \psi_1 \bar{\psi}_2) rdr = \int
  r \partial_r a(r) \Re( \psi_1 \frac{\bar{\psi}_2}r) rdr
\end{split}
\]
This computation is valid in a classical sense provided that $R_{m \pm 1}
\psi^\pm \in H^2$.  For general functions $\psi^\pm$ this is done by
using a regularization argument as we did in the proof of part i) of
Theorem \ref{CT}. Note that the quantities involved on both sides of
\eqref{vir1} are meaningful in light of the fact that $\psi_0 \in \dot
H^{-1}_e$ and $a \psi_2 \in \dHe$.

We now introduce the two momenta, the radial and the temporal one, as
follows
\[
M_1 = \frac{\Re{(\psi_1 \bar{\psi}_2)}}{A_2+m} , \qquad M_0= 
\frac{\Re{(\psi_0 \bar{\psi}_2)}}{A_2+m}
\]
Using the covariant calculus, the time momentum can be further written
as follows
\[
\begin{split}
  (A_2+m) M_0 &  =  \Re{(\psi_0 \bar{\psi}_2)} \\
  & = \Re{\left( i(D_1 \psi_1 + \frac{1}r \psi_1 + \frac{1}{r^2} D_2 \psi_2) \bar{\psi}_2 \right)} \\
  & = - \Im{(\partial_r  \psi_1 \bar{\psi}_2)} - \frac1r \Im{(\psi_1 \bar{\psi}_2)} - \frac{A_2}{r^2} |\psi_2|^2 \\
  & = - \partial_r \Im(\psi_1 \bar{\psi}_2) - \Im(\psi_1 \partial_r \bar{\psi}_2)  + \frac1r  \partial_r A_2 - \frac{A_2}{r^2} |\psi_2|^2 \\
  &=  \partial_r^2 A_2 + \frac1r \partial_r A_2 - A_2(|\psi_1|^2+
  \frac{|\psi_2|^2}{r^2})
\end{split}
\]
which leads to
\begin{equation}
  M_0= \Delta \ln(A_2+m) + \left(\frac{ \partial_r A_2}{A_2+m} \right)^2 - \frac{A_2}{A_2+m} (|\psi_1|^2+ \frac{|\psi_2|^2}{r^2}) 
\end{equation}

The following identity plays a fundamental role in our analysis
\begin{equation} \label{vir2}
  \partial_t M_1 - \partial_r M_0 = - \partial_r A_0
\end{equation}
This is established by using the covariant rules of calculus,
\[
\begin{split}
  \partial_t M_1 & = \frac{\Re{(D_0 \psi_1 \bar{ \psi}_2)}}{A_2+m} 
  + \frac{\Re{(\psi_1 \overline{ D_0 \psi_2})}}{A_2+m}
  - \frac{\Re{(\psi_1 \bar{\psi}_2)}}{(A_2+m)^2} \partial_t A_2 \\
  & = \frac{\Re{(D_1 \psi_0 \bar{ \psi}_2)}}{A_2+m} +
  \frac{\Re{(\psi_1 \overline{ D_2 \psi_0})}}{A_2+m}
  + \frac{\Re{(\psi_1 \bar{\psi}_2)}}{(A_2+m)^2} \Im{(\psi_0 \bar{\psi}_2)} \\
  & = \partial_r M_0 - \frac{\Re{(\psi_0 \partial_r \bar{
        \psi}_2)}}{A_2+m} + \frac{\Re{(\psi_0 \bar{
        \psi}_2)}}{(A_2+m)^2} \partial_r A_2 + \frac{\Re{(\psi_1
      \overline{ D_2 \psi_0})}}{A_2+m}
  + \frac{\Re{(\psi_1 \bar{\psi}_2)}}{(A_2+m)^2} \Im{(\psi_0 \bar{\psi}_2)} \\
  & = \partial_r M_0 - \frac{A_2 \Im{(\psi_0 \bar{ \psi}_1)}}{A_2+m} -
  \frac{\Re{(\psi_0 \bar{ \psi}_2)}}{(A_2+m)^2} \Im{(\psi_1
    \bar{\psi}_2)} + \frac{A_2 \Im{(\psi_1 \overline{\psi_0})}}{A_2+m}
  + \frac{\Re{(\psi_1 \bar{\psi}_2)}}{(A_2+m)^2} \Im{(\psi_0 \bar{\psi}_2)} \\
  & = \partial_r M_0 - 2 \frac{A_2 \Im{(\psi_0 \bar{ \psi}_1)}}{A_2+m}
  + \frac{|\psi_2|^2\Im{(\psi_0 \bar{\psi}_1)}}{(A_2+m)^2}  \\
  & = \partial_r M_0 - \Im{(\psi_0 \bar{ \psi}_1)} \\
  & = \partial_r M_0 -  \partial_r A_0 \\
\end{split}
\]
The above computation is meaningful provided that $R_{m \pm 1} \psi^\pm \in
H^3$.

Next we derive a localized version of \eqref{vir2} which has also the
advantage that it makes sense for $\psi^\pm \in L^2$ only. We take $a:
\R_+ \rightarrow \R$ to be a smooth function which decays at infinity
and satisfies also $|\frac1r \partial_r a | \les 1$ and
$|\partial_r^2 a| \les 1$. As a consequence we have that if $f \in \dHe$
then $\frac1r f \partial_r a \in \dHe$.

We multiply \eqref{vir2} by $a$ and integrate by parts as follows
\begin{equation} \label{vir3} \int a(r) M_1(r) dr \left|_0^T \right. +
  \int_0^T \int \partial_r a(r) M_0 dr = \int_0^T \int \partial_r a(r)
  A_0 dr
\end{equation}
This identity is now meaningful for $\psi^\pm \in L^2$. Indeed each
term is well-defined for the following reasons:

- the first since $a$ is bounded and $\frac1r M_1 \in L^2$,

- the second since $\psi_0 \in \dot H_e^{-1}$ and $\frac1r \partial_r
a \cdot \psi_2 \in \dHe$,

- the third since $\frac1r \partial_r a$ is bounded and $A_0 \in L^1$.

The justification of \eqref{vir3} for general $\psi^\pm \in L^2$ is
done by regularizing $\psi^\pm$ as above.

It will be useful to rewrite the second term on the left-hand side as
follows
\[
\begin{split}
  \int \partial_r a(r) M_0 dr & = \int \frac{1}r \partial_r a(r) \left( \Delta \ln(A_2+m) 
  + \left(\frac{ \partial_r A_2}{A_2+m} \right)^2 - \frac{A_2}{A_2+m} (|\psi_1|^2+ \frac{|\psi_2|^2}{r^2}) \right) rdr \\
  & =  - \int \partial_r (\frac{1}r \partial_r a(r)) \partial_r \ln(A_2+m) rdr 
- \int \frac{1}r \partial_r a(r)  G(r) dr
\end{split}
\]
where 
\[
G(r) =  - \left(\frac{ \partial_r A_2}{A_2+m} \right)^2 + \frac{A_2}{A_2+m} (|\psi_1|^2+
    \frac{|\psi_2|^2}{r^2}) 
\]
Using \eqref{comp1} one can easily see that $G$ is positive definite,
\begin{equation}\label{Gpos}
G
 \geq |\psi_1|^2 ( \frac{A_2}{A_2+m} - \frac{|\psi_2|^2}{(A_2+m)^2}) + \frac{A_2}{A_2+m}  \frac{|\psi_2|^2}{r^2} \geq \frac{m}{m+m_0} |\psi_1|^2 + \frac12  \frac{|\psi_2|^2}{r^2}
\end{equation}
where $m+m_0$ is an upper bound for $A_2$, obtained from  \eqref{A2}.

\subsection{Proof of Theorem \ref{lack}.} The argument is in the
spirit of the corresponding one in \cite{KeMe1}.

Based on a localized version of \eqref{vir1} and \eqref{vir3} we rule
out the possibilities exhibited in parts i) and ii) of Theorem
\ref{lambdacontrol}.

 By using \eqref{rela} and \eqref{loc2est}, the concentration
property \eqref{psicon} implies that all of the differentiated variables
$\psi_1, \psi_2$ and $A_2$ are concentrated in a compact set,
\begin{equation}\label{Rconc}
\int_{r \ges C(\eta) c^{-1} \eta^{-1}} (| \psi_1(r) |^2 +
\frac{|\psi_2(r)|^2}{r^2} + \frac{(A_2(r)-m)^2}{r^2}) rdr \les \eta,
\qquad \forall t \in I_+.
\end{equation}

We start by ruling out the existence of a critical element from part
i) of Theorem \ref{lambdacontrol}, i.e. the global element with
$\lambda(t) \geq c > 0, \forall t > 0$.  In \eqref{vir3}, we take
$a(r)=r^2 \phi(\frac{r}{R})$ where $\phi$ is smooth and equals $1$ for
$ r \leq 1$ and $0$ for $r \leq 2$, and obtain
\begin{equation} \label{virmid}
  \begin{split}
   \left. \int a(r) M_1(r) dr \right|_0^T  & = \int_0^T \!\! \int \partial_r  (\frac{1}r \partial_r a(r)) \partial_r \ln(A_2+m) rdr dt   - \int_0^T\!\! \int  \frac{1}r \partial_r a(r) G(r)  rdr  dt \\
    & + \int_0^T \int \partial_r a(r) A_0 dr dt
  \end{split}
\end{equation}
In this identity there are two main terms which we compare against
each other: the  one the left-hand side and the second on the
right-hand side. All the other terms are controlled by one of the two
main terms just mentioned.

We choose $\eta \ll 1$ small enough (the exact choice is derived from
the inequalities on the error terms below) and $R = C(\eta) c^{-1} \eta^{-1} \gg
c^{-1}$; we estimate the main terms in the above expression by
\[
\left|\int a(r) M_1 dr \right| \les \int r^2 |\psi_1| |\frac{\psi_2}r| rdr \les
R^2 \| \psi_1 \|_{L^2} \| \frac{\psi_2}r \|_{L^2} \les R^2 m_0
\]
which is valid both at $t=0$ and $t=T$, and, by \eqref{Gpos} and \eqref{Rconc}
\[
\int_0^T \int \frac{1}r \partial_r a(r) G(r)  rdr dt \ges T 
\]
By choosing $T \gg R^2 m_0$ we obtain a contradiction, provided that we
establish that all the other terms involved in \eqref{virmid} are of
error type.

The first term on the left-hand side of \eqref{virmid} is bounded as
follows
\[
\begin{split}
  \left|\int_0^T \int \partial_r (\frac{1}r \partial_r a(r)) \partial_r
  \ln(A_2+m) rdr dt\right| \les \int_0^T \int_{r \approx R} |\partial_r A_2|
  dr dt \les T \eta \ll T
\end{split}
\]
For the third term on the right-hand side of \eqref{virmid} we use
\eqref{A0c} and write
\[
\left|\int_0^T \int \partial_r a(r) A_0 dr dt\right| =\left|\int_0^T \int (-2+
\frac1r \partial_r a(r)) A_0 r dr dt\right|
\]
which is then bounded by
\[
\les \int_0^T \| \psi_1 \|_{L^2[R, \infty)} \| \frac{\psi_2}r
\|_{L^2[R, \infty)} dt \les T \eta \ll T
\]
We have just shown that the other two terms in \eqref{virmid} are of
error type and this finishes the contradiction argument. With this we
conclude ruling out the possibility exhibited in part i) of Theorem
\ref{lambdacontrol}.

Next we rule out the critical element of type exhibited in part
ii). In this case the assumption is that we have a critical element
with $T_+ <\infty, \lim_{t \rightarrow T_+} \lambda(t)=+\infty$.

For fixed $R$ we claim that
\begin{equation} \label{limitT} \lim_{t \rightarrow T_+} \int
  \phi(\frac{r}R) (A_2-m) rdr = 0
\end{equation}
Indeed, for given $\epsilon > 0$, pick $\eta$ such that $\eta^\frac12 R^2 <
\epsilon$.  Using \eqref{loc2est} we obtain
\[
\begin{split}
  & \| \phi(\frac{r}R) (A_2-m) \|_{L^1} \\
  \les & (C(\eta) \lambda(t)^{-1} \eta^{-1})^2 \| \frac{A_2-m}{r}
  \|_{L^2[0,C(\eta)\lambda^{-1}(t) \eta^{-1}]}
  + R^2 \| \frac{A_2-m}{r} \|_{L^2[C(\eta) \lambda^{-1}(t) \eta^{-1},R]}  \\
  \les & (C(\eta) \eta^{-1} \lambda(t)^{-1})^2 m_0^\frac12 + \eta^\frac12 R^2
\end{split}
\]
By choosing $t$ close enough to $T_+$, we obtain $ (C(\eta) \eta^{-1}
\lambda(t)^{-1})^2 m_0^\frac12 < \epsilon$, and this establishes \eqref{limitT}.

Next we choose $a(r)=\phi(\frac{r}R)$, fix $\eta > 0$, integrate
\eqref{vir1} on $[t,T_+)$ and use \eqref{limitT} to obtain
\[
\int \phi(\frac{x}R) (A_2(r,t)-m) rdr \les (T_+ - t) \| \psi_1(t)
\|_{L^2(|x| \approx R)} \| \frac{\psi_2(t)}r \|_{L^2(|x| \approx R)}
\les (T_+ - t) \eta
\]
provided that $R \ges C(\eta) \eta^{-1} \lambda(t)^{-1}$. By fixing $t$ and
taking $\eta \rightarrow 0$ (which also forces $R \rightarrow
\infty$), it follows that
\[
\int (A_2(r,t)-m) rdr = 0
\]
which implies $A_2(t) \equiv m$ hence, by \eqref{cons} and then by
\eqref{comp} it follows that $\psi_2(t) \equiv 0$ and $\psi_1(t)
\equiv 0$. Finally this implies by \eqref{rela} that $\psi^\pm(t) \equiv 0$
which contradicts the blow-up hypothesis at time $T_+$ (since the
solution is globally in time $\equiv 0$).

\section{Proof of the main result}

This section is dedicated to the proof of Theorem \ref{MT}. Given an
initial data $u_0 \in \dot H^1 \cap \dot H^3$, by using Theorem
\ref{clasic} it follows that it has a unique local solution on $[0,T]$
for some $T >0$.  On this interval we use sections \ref{CG} and
\ref{COG} to construct the associated compatible fields $\psi^\pm$ obeying the
system \eqref{psieq}. By using Theorem \ref{lack} (and the previous
reduction from Section \ref{concomp}) it follows that the solution
$\psi^\pm$ is globally defined on $[0,+\infty)$ and with $\| \psi^\pm
\|_{L^4(\R_+ \times \R_+)} < + \infty$. By part vii) of Theorem
\ref{CT} the $H^2$ regularity of $R_{m \pm 1} \psi^\pm_0$ is propagated at
all times $t \geq 0$. Invoking Proposition \ref{greg} this implies
that $u(t) \in \dot H^1 \cap \dot H^3$ with bounds depending on $\|
\psi^\pm \|_{L^4(\R_+ \times \R_+)} , \| R_{ m \pm 1} \psi^\pm_0 \|_{H^2}$
and $t$. Using again Theorem \ref{clasic}, this means that the
solution $u(t)$ can be continued past time $T$ and in fact for all
times $t \geq 0$ with $u(t) \in L^\infty_t (\R_+: \dot H^1 \cap \dot
H^3)$. The scattering statement refers to the scattering for
$\psi^\pm(t)$, which follows from the Cauchy theory for the system
\eqref{psieq}, see Theorem \ref{CT}.

Part ii) of the Theorem \ref{MT} is standard (see \cite{BIKT2} for details)
and it follows from \eqref{dpsi}, the Cauchy theory for the system \eqref{psieq}
and \eqref{udif}.

\section{Appendix}

\begin{proof}[Proof of Proposition \ref{greg}]
  We write the arguments below in a qualitative fashion in order to
  have a concise argument.  However one easily sees that the argument
  below provides quantitative bounds which lead to \eqref{rtr}.

  We first read the information $u \in \dot H^2$.  Using the equivariance property of $u$, we obtain
  \begin{equation} \label{uH2}
   H_m u_1, H_m u_2 \in L^2, H_0 u_3 \in L^2.
  \end{equation}
  Since $u_3^2=1+u_1^2+u_2^2$ it follows that
  \[
  \frac{u_1 \partial_r u_1 + u_2 \partial_r u_2}r =
  \frac{u_3 \partial_r u_3 }r \in L^2
  \]
  and by invoking $ \frac1r (\partial_r - \frac{m}{r}) (u_1,u_2) \in
  L^2$, we obtain $\frac{u_1^2+u_2^2}{r^2} \in L^2$.

  Since $D_r (v+iw)=0$ it follows that
  \[
  \partial_r \psi^\pm = \partial_r \left( \W^\pm \cdot (v + i w)
  \right) = (\partial_r \W^\pm) \cdot (v + i w)
  \]
  where we recall that
  \[
  \W^{\pm} = \partial_r u \pm \frac{1}{r} u \times \partial_\theta u \in T_u(\S^2)
  \]
  From this we compute
  \[
  \frac1r \W^\pm = \frac1r \left( (\partial_r  \mp \frac{m}r) u_1, (\partial_r
    \mp \frac{m}r) u_2, \partial_r u_3 \right) \pm m (\frac{(u_3-1)u_1}{r^2},
  \frac{(u_3-1)u_2}{r^2},-\frac{u_1^2+u_2^2}{r^2})
  \]
  From \eqref{uH2}, Lemma \ref{LBE} and  the fact that $\frac{u_1^2+u_2^2}{r^2} \in
  L^2$, it follows that $\frac{\W^\pm} r \in L^2$ if $m \geq 2$ and $\frac{\W^+}r \in L^2$ if $m=1$. 
  This implies the corresponding result for $\frac{\psi^\pm}r$. 
  
  A direct computation gives
  \[
  \begin{split}
    \partial_r \W^\pm & = \partial_r^2 u \pm m \partial_r (\frac{u \times Ru}r) \\
    & = \partial_r^2 u \mp m \frac{\partial_r u_3 \cdot u + u_3 \cdot \partial_r u}{r} \mp m \frac{\overrightarrow{k}- u_3 \cdot u}{r^2} \\
    & = (\partial_r^2 \mp \frac{m}r \partial_r \pm \frac{m}{r^2}) u \mp
    m \frac{u_3-1}r \partial_r u \mp m \frac{\overrightarrow{k}}{r^2} +
    f^\pm u
  \end{split}
  \]
  where
  \[
  f^\pm= \mp m \frac{\partial_r u_3}r \pm m \frac{u_3-1}{r^2}
  \]
  We then continue with
  \[
  \begin{split}
    \partial_r \psi^\pm & = \left( (\partial_r^2 \mp
      \frac{m}r \partial_r \pm \frac{m}{r^2}) u_1, (\partial_r^2 \mp
      \frac{m}r \partial_r \pm \frac{m}{r^2}) u_2,
      (\partial_r^2 \mp \frac{m}r \partial_r) u_3 \right) \cdot (v+iw) \\
    & \mp m \frac{u_3-1}r \psi_1   \pm i \frac{u_3-1}{r^2} \psi_2 \\
    & = F^\pm \mp m \frac{u_3-1}r  \frac{(m-1)\psi^+ + (m+1)\psi^-}{2}
  \end{split}
  \]
  where $F^\pm \in L^2$ from \eqref{uH2}. From the expression of $\W^\pm$
  and the Sobolev embeddings it follows that $\| \psi^\pm \|_{L^4} \les \| u \|_{\dot H^1 \cap \dot H^2}$,
  hence $\| \frac{\psi_2}r \|_{L^4} \les \| u \|_{\dot H^1 \cap \dot H^2}$. Therefore $\frac{u_3-1}{r}=
  \frac1{u_3+1} \frac{u_1^2 + u_2^2}{r}=  \frac1{u_3+1} \frac{|\psi_2|^2}{m^2 r} \in L^4$, which implies
  that $\frac{u_3-1}{r} \frac{(m-1)\psi^+ + (m+1)\psi^-}{2} \in L^2$ and we conclude with $\partial_r \psi^\pm \in L^2$.

  Hence we have just established that $R_{m \pm 1} \psi^\pm \in H^1$.  The
  procedure can be easily reversed, i.e. if $R_{m \pm 1} \psi^\pm \in H^1$
  then $u \in \dot H^2$, the details are left to the reader.

  Next we transfer third derivatives of $u$ to second derivatives for
  $\psi^\pm$ and vice-versa.  From $\Delta u \in \dot H^1$,
   using the equivariance properties of $u$, it follows
  \begin{equation} \label{uH3}
  H_{m} u_1, H_m u_2 \in \dHe, \partial_r H_0 u_3 \in L^2
  \end{equation}
  
   Using the above computation for
  $\partial_r \psi^+$, we have
  \[
  \begin{split}
    H_{m \pm 1} \psi^\pm & = (\partial_r+\frac1r) F^\pm \mp \frac{u_3-1}{r} \frac{(m-1) \partial_r \psi^+ + (m+1) \partial_r \psi^-}{2}
    \mp \frac{\partial_r A_2}{r} \frac{(m-1)\psi^+ + (m+1)\psi^-}{2m} \\
    & - \frac{(m \pm 1)^2}{r^2} \psi^\pm
  \end{split}
  \]
  The derivative in $\partial_r F^\pm$, can fall on either term in the expression of $F^\pm$. 
  From \eqref{uH3} and Lemma \ref{LBE} it follows that in all cases 
  $\partial_r (\partial_r^2 \mp \frac{m}r \partial_r \pm \frac{m}{r^2}) u_1,
  \partial_r (\partial_r^2 \mp \frac{m}r \partial_r \pm \frac{m}{r^2}) u_2,
  \partial_r (\partial_r^2 \mp \frac{m}r \partial_r) u_3  \in L^2$.  Using Lemma \ref{LBE},
  it follows that if $m \ne 1$, then $\partial_r u_1, \partial_r u_2, \partial_r u_3 \in \dHe \subset L^\infty$,
  hence by \eqref{cgeq} implies that $\partial_r v \in L^\infty$, and similarly $\partial_r w \in L^\infty$.
  If $m=1$ then by the same Lemma \ref{LBE}, $(\partial_r^2 \mp \frac{m}r \partial_r \pm \frac{m}{r^2}) u_1,
  (\partial_r^2 \mp \frac{m}r \partial_r \pm \frac{m}{r^2}) u_2,
  (\partial_r^2 \mp \frac{m}r \partial_r) u_3  \in \dHe \subset L^\infty$ and since $\partial_r u \in L^2$,
  then by \eqref{cgeq} $\partial_r v, \partial_r w \in L^2$. Hence we have completed the proof of the fact that 
  $\partial_r F^\pm \in L^2$. 
  
  Next, if $m = 1$, then from \eqref{uH3} and Lemma \ref{LBE} it follows
  that $\frac1r F^\pm \in L^2$. The other linear term left is $4 \frac{\psi^+}{r^2}$ (in the expression of $H_2 \psi^+$),
  which is estimated from
  \[
  \frac{\psi^+}{r^2} = \frac1{r^2} \left( (\partial_r  - \frac{m}r) u_1, (\partial_r
   - \frac{m}r) u_2, \partial_r u_3 \right) \cdot (v+iw) -\frac{u_1^2+u_2^2-u_3(u_3-1)}{r^3}(v_3+iw_3)
  \]
   Indeed, from Lemma \ref{LBE} it follows that $ (\partial_r  - \frac{m}r) u_1, (\partial_r
   - \frac{m}r) u_2, \partial_r u_3 \in L^2$, and from
   $|\frac{u_1^2+u_2^2-u_3(u_3-1)}{r^3}(v_3+iw_3)| \les \frac{|\psi_2|^3}{r^3}$
    and the Sobolev embedding $\frac{\psi_2}r \in L^6$ it follows that all the linear terms 
    in $H_{m\pm1} \psi^\pm \in L^2$. 
    
    If $m=2$, then $\frac{1}r F^+ \in L^2$ on behalf of Lemma \ref{LBE} and $\frac{\psi^+}{r^2} \in L^2$ 
    is shown as above. On the other hand, 
    \[
    \frac1r F^- - \frac1{r^2} \psi^- = \frac1r \left( (\partial_r^2 +
      \frac{1}r \partial_r - \frac{4}{r^2}) u_1, (\partial_r^2 +
      \frac{1}r \partial_r - \frac{4}{r^2}) u_2,
      (\partial_r^2 - \frac{1}r \partial_r) u_3 \right) \cdot (v+iw)
    \]
    belongs to $L^2$ on behalf of Lemma \ref{LBE}. 
    
    If $m \geq 3$, then it is a simple exercise to show that all the linear terms belong to $L^2$. 
    
    Moving on to the nonlinear terms in the expression of $H_{m \pm 1} \psi^\pm$, we notice
    that $\psi^\pm \in L^4 \cap L^6$ by using the Sobolev embeddings. Using \eqref{comp}, it then
    follows that  $\frac{\partial_r A_2}{r} \frac{(m-1)\psi^+ + (m+1)\psi^-}{2m}  \in L^2$ by using the $L^6$
    estimate for all terms involved. 
    
    For the last term we claim that $\partial_r \psi^\pm \in L^3$, from which
     $\frac{u_3-1}{r} \frac{(m-1) \partial_r \psi^+ + (m+1) \partial_r \psi^-}{2} \in L^2$ follows
     by using the $L^6$ estimate for $\frac{\psi_2}r$. The claim follows from the formula above 
     for $\partial_r \psi^\pm$, the $L^6$ estimate for $\psi^\pm$ and the Sobolev embedding
     $(\partial_r^2 \mp  \frac{m}r \partial_r \pm \frac{m}{r^2}) u_1, (\partial_r^2 \mp
      \frac{m}r \partial_r \pm \frac{m}{r^2}) u_2,
      (\partial_r^2 \mp \frac{m}r \partial_r) u_3 \in L^3$ (which can be derived using the Hankel calculus
      along the lines of the arguments in Lemma \ref{LBE}).

\end{proof}


\begin{thebibliography}{99}

\bibitem{BG} H. Bahouri, P. Gerard, \emph{High frequency approximation
    of solutions to critical nonlinear wave equations}.
  Amer. J. Math. 121 (1999), no. 1, 131-175.

\bibitem{bv} P. Begout, A. Vargas, \emph{Mass concentration phenomena
    for the $L^2$-critical nonlinear Schr\"odinger equation}
  Trans. AMS 359 (2007), no. 11, 5257-5282.

\bibitem{Be} I. Bejenaru, \emph{On Schr\"odinger maps}, Amer. J. Math,
  no 4, 1033--1065.

\bibitem{Be2} I. Bejenaru, \emph{Global results for Schr\"odinger maps
    in dimensions $n\geq 3$}, Comm. Partial Differential Equations
  {\bf{33}} (2008), 451--477.

\bibitem{bik} I. Bejenaru, A. Ionescu, C. Kenig, \emph{Global
    existence and uniqueness of Schr\"{o}dinger maps in dimensions
    $d\geq 4$}, Adv. Math.  215 (2007), no. 1, 263--291.

\bibitem{BIKT} I. Bejenaru, A. Ionescu, C. Kenig, D. Tataru,
  \emph{Global Schr\"odinger maps}, to appear, Annals of Math.

\bibitem{BT-SSM} I. Bejenaru, D. Tataru, \emph{Near soliton evolution
    for equivariant Schr\"odinger Maps in two spatial dimensions},
  preprint

\bibitem{BIKT2} I. Bejenaru, A. Ionescu, C. Kenig, D. Tataru, 
\emph{Equivariant Schr\"odinger Maps in two spatial dimensions.}, preprint.

\bibitem{csu} N. Chang, J. Shatah, K. Uhlenbeck, \emph{Schr\"odinger
    maps} Comm.  Pure Appl. Math.  53 (2000), no. 5, 590--602.

\bibitem{GM} M. Grillakis, V. Stefanopoulos, \emph{Lagrangian
    formulation, energy estimates, and the Schr\"odinger map problem.}
  Comm. Partial Differential Equations 27 (2002), no. 9-10,
  1845–1877.

\bibitem{gkt1} S. Gustafson, K. Kang, T. Tsai, \emph{Schr\"odinger
    flow near harmonic maps}, Comm. Pure Appl. Math.  60 (2007),
  no. 4, 463--499.

\bibitem{gkt} S. Gustafson, K. Kang, T. Tsai, \emph{Asymptotic
    stability of harmonic maps under the Schr\"odinger flow.}, Duke
  Math. J. 145 no. 3 (2008) 537-583.

\bibitem{gnt} S. Gustafson, K. Nakanishi, T. Tsai, \emph{Asymptotic
    stability, concentration, and oscillation in harmonic map
    heat-flow, Landau-Lifshitz, and Schr\"odinger maps on $\R^2$.},
  preprint available on arxiv.

\bibitem{gk} S. Gustafson. E. Koo, \emph{Global well-posedness for 2D
    radial Schr\"odinger maps into the sphere.}

\bibitem{IoKe2} A. D. Ionescu and C. E. Kenig, \emph{Low-regularity
    Schr\"odinger maps}, Differential Integral Equations {\bf{19}}
  (2006), 1271--1300.
\bibitem{IoKe3} A. D. Ionescu and C. E. Kenig, \emph{Low-regularity
    Schr\"odinger maps, II: global well-posedness in dimensions $d\geq
    3$}, Comm. Math. Phys. {\bf{271}} (2007), 523--559.
  
\bibitem{KeMe1} C. Kenig and F. Merle, \emph{Global well-posedness,
    scattering and blow-up for the energy-critical, focusing,
    non-linear Schr\"odinger equation in the radial case.}
  Invent. Math. 166 (2006), no. 3, 645–675.

\bibitem{Ker} S. Kerani, \emph{On the defect of compactness for the
    Strichartz estimates of the Schr\"odinger equations},
  J. Differential Equations 175 (2001), no. 2, 353-392.

\bibitem{ktv} R. Killip, T. Tao, M. Visan, \emph{The cubic nonlinear
    Schr\"odinger equation in two dimensions with radial data.},
  J. Eur. Math. Soc. (JEMS) 11 (2009), no. 6, 1203–1258.


\bibitem{KS} J. Krieger, W. Schlag, \emph{Concentration compactness
    for critical wave maps}, to appear in the series "Monographs" of
  the EMS Publishing House.

\bibitem{Ga} H. McGahagan, \emph{An approximation scheme for
    Schr\"odinger maps}, Comm. Partial Differential Equations
  {\bf{32}} (2007), 375--400.
\bibitem{mv} F. Merle, L. Vega, \emph{Compactness at blow-up time for
    $L^2$ solutions of the critical nonlinear Schr\"odinger equation
    in 2D.} Internat. Math. Res. Notices 1998, no. 8, 399-425.

\bibitem{MRR} F. Merle, P. Rapha\"el, I. Rodnianski, \emph{Blow up
    dynamics for smooth equivariant solutions to the energy critical
    Schr\"odinger map}, preprint.

\bibitem{Sm} P. Smith, \emph{Conditional global regularity of
    Schr\"odinger maps: sub-threshold dispersed energy}, preprint.
    
\bibitem{SuSuBa} P. L. Sulem, C. Sulem, and C. Bardos, \emph{On the
    continuous limit for a system of classical spins},
  Comm. Math. Phys., {\bf{107}} (1986), 431--454.
   
    
\bibitem{tvz} T. Tao, M. Visan, X. Zhang, \emph{Minimal-mass blowup
    solutions of the mass-critical NLS.} Forum Math. 20 (2008), no. 5,
  881–919.


\end{thebibliography}
\end{document}